\def\R{\mathbb{R}}
\def\Z{\mathbb Z}
\def\E{\mathbb{E}}
\DeclareMathOperator{\spa}{span}
\DeclareMathOperator{\tr}{tr}
\newtheorem{theorem}{Theorem}[section]
\newtheorem{lemma}[theorem]{Lemma}
\newtheorem{corollary}[theorem]{Corollary}
\newtheorem{proposition}[theorem]{Proposition}
\newtheorem*{definition}{Definition}
\DeclareMathOperator{\sff}{II}
\title{Ancient pancake solutions to fully nonlinear curvature flows}
\author{Mat Langford}
\author{Sathyanarayanan Rengaswami}
\date{March 2023}
\pgfplotsset{compat=1.17}
\begin{document}

\begin{abstract}
We construct $O(1)\times O(n)$-invariant ancient ``pancake'' solutions %{\color{blue}(i.e. convex ancient solutions lying in a slab region)} 
to a large and natural class of fully nonlinear curvature flows. We then establish that each is unique within the class of $O(n)$-invariant ancient solutions to the corresponding flow which sweep out a slab by carrying out a fine asymptotic analysis for this class. This extends the main results of \cite{BLT} to a surprisingly general class of flows.
\end{abstract}

%\keywords{Curvature flows, ancient solutions, pancakes, fully nonlinear parabolic PDE}

\maketitle

\setcounter{tocdepth}{1}

\tableofcontents

%{\color{red}Some comments:

%-- We should distinguish $\phi$ (as a function of curvatures) from $\phi\circ\kappa$ (the speed) or make a note that we conflate the two.}

\section{Introduction}

Extrinsic geometric flows drive  hypersurfaces in their normal direction with speed determined pointwise by their extrinsic curvature. The most well-known example is the mean curvature flow, but several other interesting flows have been studied in the literature, and have found applications in, for example, geometry, materials science, general relativity, and image processing. %In order for the flow \eqref{eq:F} to be well-defined and parabolic (in a suitable gauge), the minimum requirements on the speed, as a function of the principal curvatures function, are that it should be permutation invariant and monotone increasing, respectively. %In order to admit parabolic scaling invariance, $\phi$ should be homogeneous. 
%We will be interested in flows that admit an additional invariance under parabolic rescaling, as well as very mild concavity and non-degeneracy conditions. We will give a more precise definition in \S \ref{sec:admissible speeds}. %We will be interested in \emph{compact flows}, i.e. when the timeslices are compact. We also only consider convex flows, meaning that each timeslice $\Sigma_t$ bounds a convex body (and hence is topologically the $n$-sphere $S^n$). The latter condition is preserved under the class of flows we consider. 
%A solution to such a flow is said to be \emph{ancient} if it is defined for all times in a backwards-infinite time interval $(-\infty,T)$, where $T \leq \infty$.

\emph{Ancient} solutions to nonlinear parabolic partial differential equations typically arise as blowup limits about singularities, and tend to exhibit rigidity properties% (cf. Appell's theorem for the heat equation)
. %A complete classification of such solutions appears to be a very difficult problem, and any such classification cannot be very simple \cite{...} but the added assumption of convexity makes the problem more tractable% (cf. positivity in Appell's theorem). %Indeed, the convex ancient solutions to curve shortening flow in the plane have been completely classified: the only ones up to ambient isometries and parabolic rescaling are the shrinking circle, the Grim Reaper, the Angenent oval, the stationary line, and the stationary line. In higher dimensions, however, such a classification has not been achieved, even for the mean curvature flow.
Our purpose here is to analyse ``ancient pancake'' solutions (i.e. compact convex ancient solutions which are confined to slab regions) to a large and natural class of extrinsic geometric flows. In particular, we establish the following classification theorem.

\begin{theorem}\label{thm:pancakes}
Given any non-degenerate admissible speed function, there exists an $O(1)\times O(n)$-invariant ancient solution $\{\partial\Omega_t\}_{t\in(-\infty,0)}$, $\Omega_t\underset{\text{convex}}{\subset}\R^{n+1}$, to the corresponding extrinsic geometric flow exhibiting the following behaviour:
\begin{enumerate}[(a)]
\item $\{\lambda\Omega_{\lambda^{-2}t}\}_{t\in(-\infty,0)}\to\{B^{n+1}_{\sqrt{-2\phi_1t}}\}_{t\in(-\infty,0)}$ locally uniformly in the smooth topology as $\lambda\to\infty$, where $\phi_1$ is the value the speed takes on the unit sphere.
\item $\cup_{t\in(-\infty,0)}\Omega_t=\{(x,y)\in \R\times\R^{n-1}:\vert x\vert<\frac{\pi}{2}\}$, and
\item $\{\partial\Omega_{t+\tau}-P(w,\tau)\}_{t\in(-\infty,-\tau)}\to \{\Gamma_t^n\}_{t\in(-\infty,0)}$ locally uniformly in the smooth topology as $\tau\to-\infty$ for every $w\in \{0\}\times S^{n-1}$, where $P(w,t)$ is the point on $\partial\Omega_t$ whose outer unit normal is $w$, and %$\{\Gamma^n_t\}_{t\in(-\infty,0)}$ is the standard Grim hyperplane with velocity $-z$: 
\[
\Gamma^n_t\coloneqq\{p=(x,\hat x)\in \left(-\tfrac{\pi}{2},\tfrac{\pi}{2}\right)\times\R^{n}:-p\cdot w=\log\sec x+t\}
\]
is the Grim hyperplane with velocity $w$. 
\end{enumerate}
Moreover, $\{\partial\Omega_t\}_{t\in(-\infty,0)}$ is unique amongst $O(n)$-invariant convex ancient solutions to the corresponding flow satisfying property (b). 
\end{theorem}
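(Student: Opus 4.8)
The plan is to show that every $O(n)$-invariant convex ancient solution satisfying (b) has the asymptotics described in (a) and (c), and then to deduce uniqueness by a comparison argument. By $O(n)$-invariance such a solution is a hypersurface of revolution about the $x$-axis, described by a convex profile arc $\gamma_t$ in the half-plane $\{(x,r):r\ge 0\}$ which joins two points of $\{r=0\}$ and meets that axis orthogonally; the induced evolution of $\gamma_t$ is a one-dimensional, uniformly parabolic (by non-degeneracy of the speed) curve flow whose speed depends on position through the $n-1$ rotational principal curvatures. The first step is the \emph{coarse asymptotics}: using convexity, the confinement (b), and the available pinching and curvature estimates, one shows that as $t\to-\infty$ the curvature concentrates on the maximal sphere $\partial\Omega_t\cap\{r=r_{\max}(t)\}$ where the outward normal is horizontal, while on each fixed compact set $\partial\Omega_t$ converges to the pair of hyperplanes $\{x=\pm\tfrac\pi2\}$; that $\Omega_t$ collapses to a round point as $t\to 0$, so that, after pinning down the limiting sphere via non-degeneracy and the classification of $O(n)$-invariant shrinkers, a centred version of (a) holds; and that, translating by the edge point $P(w,t)$ and letting $t\to-\infty$, the solution subconverges to an ancient solution which is a translator confined to a slab of width $\pi$ --- hence the Grim hyperplane $\Gamma^n$ --- a qualitative form of (c).

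The heart of the argument is upgrading this to a \emph{fine asymptotic expansion} at the edge. Near $P(w,t)$ one writes the solution as a normal graph of a function $u(\cdot,t)$ over the Grim hyperplane $\Gamma^n$ translated to follow $P(w,t)$. Then $u$ satisfies a fully nonlinear parabolic equation whose linearisation at $u\equiv 0$ is the stability (Jacobi) operator $\mathcal L$ of $\Gamma^n$ associated to the flow; for a general admissible speed $\mathcal L$ is \emph{not} of Schr\"odinger type --- it is built from the first and second derivatives of the speed along $\Gamma^n$ --- so its self-adjointness in the natural weighted space, the discreteness and upper-boundedness of its spectrum, and the identification of its lowest eigenfunctions have to be deduced from the monotonicity, concavity and homogeneity of the speed. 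Using the rotational symmetry about $P(w,t)$ and the confinement to the slab, the analysis reduces to the corresponding operator on the Grim Reaper curve $\Gamma^1$ --- equivalently an ordinary differential operator on $(-\tfrac\pi2,\tfrac\pi2)$ with the correct weight --- whose two lowest bounded modes are those generated by translating $\Gamma^1$ in its direction of motion (the mode corresponding to time translation) and transversally (the mode corresponding to sliding the profile along the $x$-axis). A Merle--Zaag/Angenent-type ODE lemma, applied to the modal decomposition of $u$ once the quadratic remainder and the non-compact $\R^{n-1}$-directions of $\Gamma^n$ have been controlled, shows that $u(\cdot,t)$ is asymptotic as $t\to-\infty$ to a combination of these two modes, with definite rates. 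Crucially, both coefficients are pinned: that of the time-translation mode by the normalisation of the singular time to $0$, and that of the $x$-translation mode by the fact that an $x$-translation would violate (b). Hence $u$ agrees, up to an error decaying strictly faster, with the corresponding graph function of the solution constructed in (a)--(c); in particular the solution is asymptotically symmetric under $x\mapsto -x$ and its collapse point at $t=0$ is the origin, so that (a) and (c) hold for it in full.

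For uniqueness, let $\{\partial\Omega^1_t\}$ be the solution constructed in (a)--(c) and $\{\partial\Omega^2_t\}$ any $O(n)$-invariant convex ancient solution satisfying (b), normalised as above, with profile arcs $\gamma^1_t$ and $\gamma^2_t$. By the previous two steps these profiles are $C^1$-close for $t\ll 0$, both collapse to the origin as $t\to 0$, and for each fixed $t<0$ they agree in the limit $r\to\infty$ along each edge. The difference of two solution-graphs solves a linear parabolic equation, so Angenent's Sturmian theorem applies (with the Neumann condition at $\{r=0\}$, which is admissible) and the number of intersection points of $\gamma^1_t$ and $\gamma^2_t$ is finite and non-increasing in $t$; the fine asymptotics bound this number uniformly for $t\ll 0$ --- the difference being asymptotic to a single, strictly-faster-decaying eigenmode, which has finitely many zeros --- and force it to vanish as $t\to 0$, so it vanishes for all $t$. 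The profiles are therefore nested for every $t$ and, being squeezed between the same shrinking sphere and the same pair of Grim hyperplanes, they must coincide. Equivalently, writing $\partial\Omega^2_t$ as a normal graph $v(\cdot,t)$ over $\partial\Omega^1_t$, $v$ solves a linear, uniformly parabolic equation on $\bigcup_{t<0}\partial\Omega^1_t$ with $v\to 0$ at the spatial infinity of each edge, as $t\to-\infty$, and as $t\to 0$; the maximum principle on this space-time region forces $v\equiv 0$, i.e. $\partial\Omega^2_t=\partial\Omega^1_t$ for all $t$.

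I expect the \emph{fine asymptotic analysis} of the second step to be the principal obstacle: for a general non-degenerate admissible speed the Jacobi operator $\mathcal L$ carries no a priori Schr\"odinger structure, so its spectral theory --- self-adjointness, discreteness, the explicit low eigenfunctions --- must be extracted from the structural hypotheses on the speed alone, and the nonlinear remainder together with the non-compact $\R^{n-1}$-directions of $\Gamma^n$ must be controlled finely enough both to run the Merle--Zaag argument and to pin down the decay rates. A secondary difficulty is sustaining uniform parabolicity all the way along the non-compact edges in the comparison step, which rests on the curvature estimates and the preservation of convexity furnished by the construction.
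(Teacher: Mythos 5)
Your proposal does not address existence at all, which is a structural gap: the theorem asserts that a solution with properties (a)--(c) \emph{exists}, and your plan --- showing that any $O(n)$-invariant convex ancient solution satisfying (b) has the stated asymptotics and then comparing two such solutions --- delivers at most uniqueness. The paper constructs the solution in Section~3 by evolving rotated timeslices of the Angenent oval and passing to a limit after establishing $R$-uniform displacement and curvature bounds; it is exactly this construction (together with the estimate $\phi_R\ge\vert\cos\theta\vert$ along it, Lemma~\ref{lem:phi ge cos theta}) that later guarantees the asymptotic constant $C$ is \emph{finite}, which is what makes the comparison-based uniqueness argument close. Relatedly, property (a) (parabolic rescaling to a shrinking sphere near $t=0$) is not a consequence of the $t\to-\infty$ asymptotics your second step produces; it requires a separate contraction-to-a-round-point argument (Proposition~\ref{prop:round point}), which you do not mention.

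On the fine asymptotics, you have correctly identified where your route would become hard, but you have missed the simpler route the paper actually takes. Rather than linearising over the Grim hyperplane, developing the spectral theory of the Jacobi operator for an arbitrary non-degenerate admissible speed, and running a Merle--Zaag/Angenent modal analysis --- all of which you flag as the ``principal obstacle'' --- the paper Taylor-expands the speed as $\phi(\kappa,\lambda)=\kappa+\dot\phi_1\lambda+O(\lambda^2/\kappa)$, bounds $\int_{\Gamma_t}\lambda\,ds$ and $\int_{\Gamma_t}(\lambda^2/\kappa)\,ds$ directly (Lemma~\ref{lem:lambdaintegrals}), obtains the enclosed-area expansion $A(t)/2\pi=-t+\dot\phi_1\log(-t)+O(1)$, and from this and a barrier comparison (Corollary~\ref{cor:improvedHestimate}) extracts the existence of $C=\lim_{t\to-\infty}(\ell(t)+t-\dot\phi_1\log(-t))$. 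Uniqueness then follows by a short avoidance-principle argument: if $C\neq C'$ the two profiles eventually nest strictly, contradicting that both collapse to the origin; if $C=C'$ one applies the same comparison to a time-shift and lets the shift tend to zero. This is quite different from the Sturmian zero-counting you propose, which would itself require the spectral information your step~2 is meant to produce in order to bound the intersection number for $t\ll0$, and which must contend with degeneration of the linearised operator at the slab boundary --- neither issue is resolved in your sketch. So while your spectral strategy is a plausible template from the related literature, for this class of speeds the paper's direct area/displacement estimates are both necessary (to get finiteness of $C$ on the constructed solution) and sufficient, and your proposal as written has two unfilled gaps (existence, and the spectral theory of $\mathcal L$) that the paper's argument never needs to open.
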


The class of speeds we consider (see \S \ref{sec:admissible speeds} for a definition and discussion) is large and natural in view of the behaviour described in Theorem \ref{thm:pancakes}. It includes of course the mean curvature, which is uniformly elliptic, but also many examples whose ellipticity degenerates at the boundary of the positive cone (such as power means %$P_r(z)=\left(\frac{1}{n} \Sigma_{i=1}^n z_i^r \right)^{1/r}$
in the principal curvatures and linear combinations of these with highly degenerate speeds such as the $n$-th root of the Gauss curvature). %Note that even if a speed is only ``asymptotically" 1-homogeneous i.e. $\phi_\lambda(z)=\lambda^{-1}\phi(\lambda z)$ converges to a flow speed when $\lambda \to \infty$, parabolic blowup limits give rise to a $1$-homogeneous flow. 

%We define a \emph{slab region} to be a subset of $\R^{n+1}$ that is a rigid motion of $ I\times\R^n$, where $I=(a,b)$ is an interval of finite width. A \emph{pancake solution} to a curvature flow is that is a compact, convex ancient solution that lies in a slab region. %These are important in mean curvature flow because of the following theorem of X-.J. Wang.
Ancient solutions confined to a slab (the region between two parallel hyperplanes) are natural in view of Wang's slab dichotomy \cite{Wang}, which states, for convex ancient mean curvature flows $\{\partial\Omega_t\}_{t\in(-\infty,\omega)}$, that if the region $\cup_{t\in(-\infty,0)}\Omega_t$ is a strict subset of $\R^{n+1}$, then it is a slab. 
%
%\begin{theorem}
%Any convex ancient solution to mean curvature flow along which $\sup_{\mathcal{M}_t}\vert{\sff}\vert$ is bounded on compact time intervals either sweeps out all of $\R^{n+1}$ or is contained in a slab region.
%\end{theorem}
%
%
%Wang \cite{Wang} also proved the existence of ancient pancake solutions to mean curvature flow by taking the limit of a sequence of solutions to the Dirichlet problem for the (Legendre transformed) level set flow. %This produces a variety of pancake solutions with a lot of symmetry, and the one with $O(1)\times O(n)$ symmetry is particularly interesting  as it is in some sense the most symmetric pancake solution. %By $O(n)$-symmetry we mean rotational symmetry with respect to some fixed axis, and by $O(1)$-symmetry, we mean reflection symmetry about a hyperplane orthogonal to the axis.
%
%
%For fully  nonlinear flows, the picture becomes far more complicated...
%The shrinking sphere solutions are examples of ``entire'' solutions (i.e. $\cup_{t<0}\Omega_t=\R^{n+1}$). 
When $n=1$, an ancient solution which sweeps out a slab region in the plane was constructed by Bourni \emph{et al.} in \cite{MR4375790} for flows by certain powers of the curvature.
On the other hand, while it is known, for a large class of flows, that the shrinking spheres are the only ``pinched'' ancient solutions \cite{MR4129354,MR3935256}, non-spherical $O(n)$-invariant ancient solutions (to a somewhat different but nontrivially overlapping class of flows) which sweep out all of space were constructed by Risa and Sinestrari \cite{RisaSinestrari} (see also \cite{MR4234099,RisaThesis}). When a suitable differential Harnack inequality is available \cite{AndrewsHarnack}, these ``ancient ovaloid'' solutions decompose into a pair of entire ``bowl-type'' translating solitons joined by a shrinking cylinder. (A comprehensive analysis of axially symmetric translating solitons for extrinsic geometric flows, which sheds light on these phenomena, was undertaken in \cite{Rengaswami}. See also \cite{MR1625754}.)

%In \cite{BLT}, an $O(1)\times O(n)$-invariant ancient pancake solution was obtained by taking the limit of a family of ``very old" solutions obtained by evolving rotations of ``very old'' timeslices of the Angenent oval. %The desired ancient solution is just an Arzel\`a--Ascoli type limit of these flows, which yields an axially symmetric pancake solution to mean curvature flow. 
%The advantage of this approach is that a lot of geometric information about this constructed solution becomes available, like the asymptotic expansion for the radial displacement of the solution as $t \to -\infty$. The resulting fine asymptotics turn out to be sufficient to show that this solution is unique amongst $O(n)$-invariant examples (up to rigid motions and parabolic rescaling).

When the speed is the mean curvature, we recover the main results of \cite{BLT}, and indeed we follow a similar approach here (though there are a number of new difficulties that arise, due to the nonlinearity of the speed function and the degeneration of ellipticity as convexity degenerates): we construct the ancient pancake by taking the limit of a family of ``very old" solutions obtained by evolving rotations of ``very old'' timeslices of the Angenent oval. A rough asymptotic analysis of arbitrary convex ancient solutions that have $O(n)$-symmetry (which exploits Andrews' differential Harnack inequality \cite{AndrewsHarnack}), shows that such solutions necessarily have the additional $O(1)$-symmetry as well. With some work, we are then able to establish a fine asymptotic expansion for the radial displacements of these solutions as $t \to -\infty$, which we are able to exploit to show that they are unique.

\section{Preliminaries}

\subsection{Admissible flow speeds}\label{sec:admissible speeds}

Given a smooth function $\phi:\Gamma\subset \R^n\to \R$, a family of hypersurfaces $\{\Sigma^n_t\}_{t\in I}$ of $\R^{n+1}$ evolves by the corresponding extrinsic curvature flow (which we call the $\phi$-\emph{flow}) if there is a smooth 1-parameter family of embeddings $F:M^n \times I \to \R^{n+1}$ such that $F(M,t)=\Sigma_t$ and
\begin{equation}\label{eq:F}
    \partial_t F=-\phi(\kappa_1,\dots,\kappa_n)\nu\,,
\end{equation}
where $\nu$ is a unit normal field and $\kappa_1\leq\ldots\leq\kappa_n$ are the principal curvatures with respect to $\nu$. %We call $\phi$ the \emph{flow speed}. We call solutions to \eqref{eq:F} \emph{$\phi$-flows.}

%We define here the speeds that we consider:

Since we are interested in convex solutions, we require the speed function to be defined in the positive cone $\Gamma=\Gamma_+ \coloneqq \{(\kappa_1, \ldots, \kappa_n): \kappa_1,\ldots,\kappa_n > 0\}$. %Its closure is $\overline{\Gamma}_+$.

\begin{definition}
A function\footnote{The regularity of $\phi$ can be relaxed to $C^2$, so long as suitable adjustments to regularity conclusions are made in what follows.} $\phi\in C^\infty(\Gamma_+)$ is an \emph{admissible speed} if it is
\begin{enumerate}[(a)]
\item symmetric: $\phi(z_{\sigma(1)},\ldots,z_{\sigma(n)})=\phi(z_1,\ldots,z_n)$ for all permutations $\sigma$ of the set \{1,\ldots,n\};
\item elliptic: $\frac{\partial\phi}{\partial z_i}>0$ for each $i=1,\dots,n$;\label{cond:parabolic}
\item 1-homogeneous: $\phi(\lambda z_1,\ldots,\lambda z_n)=\lambda\phi(z_1,\dots,z_n)$ for every $\lambda>0$;
\item inverse-concave: the function $(r_1,\ldots,r_n)\mapsto\phi(r_1^{-1},\ldots,r_n^{-1})^{-1}$ is concave.
\end{enumerate}
We call an admissible speed $\phi$ \emph{non-degenerate} if it is
\begin{enumerate}[(e)]
\item non-degenerate: the function $s\mapsto \phi(1,s,\dots,s)$ is of class $C^2([0,\infty))$ and satisfies $\phi(1,0,\dots,0)>0$.
\end{enumerate}
\end{definition}

Note that ellipticity of a 1-homogeneous function on $\Gamma_+$ is equivalent to ellipticity on the subset $\Gamma_+\cap S^n$. Moreover, any 1-homogeneous elliptic function on $\Gamma_+$ extends continuously to $\overline{\Gamma}_+$ \cite[Lemma 1]{MR3070558} %We include (\ref{cond:parabolic}) as a weaker condition than uniform parabolicity $\frac{\partial\phi}{\partial \kappa_i}>\delta>0$.
and is necessarily positive in $\Gamma_+$ (due to Euler's identity for homogeneous functions). Since we assume that $\phi(1,0,\ldots,0) \neq 0$, we may without loss of generality normalize the speed function so that $\phi(1,0,\ldots,0)=1$. We also set $\phi_1\coloneqq \phi(1,\ldots,1)$ and $\dot\phi_1\coloneqq \frac{d}{ds}\big\vert_{s=0}\{s\mapsto \phi(1,s,\dots,s)\}$; these three constants will play a major role in determining the asymptotic behaviour of the ancient solutions we consider.

Let us briefly discuss the purpose of conditions (a)-(e). The first two conditions are ``non-negotiable'' in that symmetry is needed to ensure that $\phi$ may be regarded as a smooth function of the second fundamental form $\sff$ (which in turn ensures that the composition of $\phi$ with the principal curvatures/second fundamental form is a smooth function on spacetime), while ellipticity is needed to ensure that \eqref{eq:F} may be interpreted as a parabolic partial differential equation. Note that, even for non-degenerate admissible speeds, the derivatives of $\phi$ with respect to the principal curvatures are allowed to degenerate at $\partial\Gamma_+$.

Homogeneity guarantees that \eqref{eq:F} is invariant under the parabolic rescaling $\Sigma_t\mapsto \lambda\Sigma_{\lambda^{-2}t}$. Note that, under the milder condition of \emph{asymptotic 1-homogeneity}, meaning that the limit
\[
(T\phi)(z)\coloneqq \lim_{\lambda\to\infty}\lambda^{-1}\phi(\lambda z)
\]
exists, blow-up limits (with respect to the rescaling $\Sigma_t\mapsto \lambda\Sigma_{\lambda^{-2}t}$) to the flow \eqref{eq:F} with speed $\phi$ evolve by \eqref{eq:F} with $\phi$ replaced by the 1-homogeneous speed $T\phi$. So our results are relevant also to certain flows by asymptotically 1-homogeneous speeds.

Inverse-concavity is typically invoked to guarantee the applicability of the Krylov--Safanov Harnack inequality; this is not needed here, however, due to the symmetry of the solutions we consider --- instead, we require it in order to make use of Andrews' differential Harnack inequality, which guarantees, e.g., that ancient solutions are asymptotically modelled on translating solitons.

The non-degeneracy condition ensures that $\phi$ is uniformly elliptic in the principal direction corresponding to the largest principal curvature, even as the other principal curvatures approach zero (the derivatives of $\phi$ in the remaining directions are still allowed to degenerate, however). Under the normalization $\phi(1,0,\dots,0)=1$, the non-degeneracy condition guarantees that the cylinder $\{\Gamma_t\times \R^{n-1}\}_{t\in I}$ is a solution to \eqref{eq:F} whenever $\{\Gamma_t\}_{t\in I}$ is a solution to curve shortening flow. This ensures that the ancient solutions we construct are asymptotically modelled on Grim hyperplanes.

%ML: Examples: when $n=2$, loads of examples are given in EGF. There are many convex speeds which uniformly parabolic on $\Gamma_+$. Given an open cone $\Gamma$ which contains $\overline\Gamma_+\cap S^n$, can we construct a concave example which is defined on $\Gamma$ and inverse-concave on $\Gamma_+$?

\subsubsection{Examples}
The class of admissible speeds is very large (see, for example, \cite[\S18.3]{EGF}). As mentioned in the introduction, the power means
\[
P_r(z_1,\dots,z_1)\coloneqq\left(\sum_{i=1}^nz_i^r\right)^{\frac{1}{r}}\,,\;\; r>0
\]
give rise to non-degenerate admissible speeds. Further examples are given by convex functions $f\in C^\infty(\Gamma)$ satisfying conditions (a)-(c) with $\Gamma_+\cap S^n\Subset \Gamma$. 
\begin{comment}
When $n=2$, we can express any smooth, symmetric, one-homogeneous function $\phi:\Gamma_+\to\R$ as
\[
\phi(z_1,z_2)=r(z_1,z_2)h(\vartheta(z_1,z_2))
\]
with $h:\in C^\infty(-\frac{\pi}{4},\frac{\pi}{4})$ an even function, where $(r,\vartheta)$ are counterclockwise oriented polar coordinates measured about the positive ray; that is,
\[
r(z_1,z_2)\doteqdot\sqrt{z_1^2+z_2^2}\,,\;\;\cos\vartheta(z_1,z_2)=\frac{z_1+z_2}{\sqrt{2}r(z_1,z_2)},\;\;\sin\vartheta(z_1,z_2)=\frac{z_1-z_2}{\sqrt{2}r(z_1,z_2)}.
\]
Ellipticity of $\phi$ is then equivalent to positivity of $h$ and the inequality
\[
\frac{h'(\vartheta)}{h(\vartheta)}\ge \frac{\cos\vartheta-\sin\vartheta}{\cos\vartheta+\sin\vartheta}\,,
\]
and inverse-concavity is equivalent to concavity of the function
\[
h_\ast(\vartheta)\doteqdot\frac{\cos(2\vartheta)}{h(\vartheta)}\,.
\]
The non-degeneracy condition requires that $h\in C^2([-\frac{\pi}{4},\frac{\pi}{4}])$ and $h(\frac{\pi}{4})>0$. It follows that the function
\[
h(\vartheta)=\exp\left(\int_0^\vartheta g(\sigma)\,d\sigma\right)
\]
defines an admissible speed whenever $g\in C^\infty(-\frac{\pi}{4},\frac{\pi}{4})$ is an odd function satisfying
\[
g(\vartheta)\ge \frac{\cos\vartheta-\sin\vartheta}{\cos\vartheta+\sin\vartheta}\;\;\text{and}\;\;\frac{d}{d\vartheta}\frac{1}{g(\vartheta)+2\tan(2\vartheta)}\le -1\,.
\]
\end{comment}

Note also that the class of (non-degenerate) admissible speeds is closed under natural compositions: if $\zeta_1,\dots,\zeta_k\in C^\infty(\Gamma_+^n)$ and $\phi\in C^\infty(\Gamma_+^k)$ are (non-degenerate) admissible speeds, then so is $\phi(\zeta_1,\dots,\zeta_k)$. 

%\bigskip

%\noindent [ML: In fact, if we agree to call an admissible speed $\phi$ \emph{$i$-non-degenerate} whenever $s\mapsto \phi(x_1,\dots,x_i,s,\dots,s)\in C^2([0,1))$ and $\phi(x_1,\dots,x_i,0,\dots,0)>0$ for each $(x_1,\dots,x_i)\in \Gamma_+^i$, then $\phi(\zeta_1,\dots,\zeta_k)$ is $j$ non-degenerate whenever $\phi$ is $i$-non-degenerate and at least $i$ of the $\zeta_i$ are $j$ non-degenerate.]

\subsection{The Angenent oval}

The Angenent oval (a.k.a the paperclip) is a solution to curve shortening flow that lies in the strip $\left( -\frac{\pi}{2},\frac{\pi}{2} \right)\times \R$. It is defined for $t \in (-\infty,0)$ and satisfies the implicit equation
\begin{equation}
    \cos x = e^t \cosh{y}\,.
\end{equation}

In the turning angle parametrization $(x(\theta,t),y(\theta,t))$, where $\theta$ is the angle made by the tangent vector and the $x$-axis when the curve is oriented counterclockwise, the $x,y$ coordinates are given explicitly by
\begin{equation*}
    x(\theta,t)= \arctan \left( \frac{\sin\theta}{\sqrt{\cos^2\theta+a^2(t)}}\right)
\end{equation*}
and
\begin{equation*}
    y(\theta,t)= -t + \log \left( \frac{\sqrt{\cos^2\theta+a^2(t)}-\cos \theta}{\sqrt{a^2(t)+1}}\right)\,,
\end{equation*}
where $a^2(t) \coloneqq \frac{1}{e^{-2t}-1}$.

We also define the horizontal and vertical displacements $h(t)$ and $\ell(t)$ by
\begin{equation*}
    h(t) \coloneqq x(\tfrac{\pi}{2},t)
\end{equation*}
and 
\begin{equation*}
    h(t) \coloneqq y(\pi,t)\,.
\end{equation*}
These displacements satisfy the estimates
\begin{eqnarray} \label{DispEst}
    \frac{\pi}{2}(1-e^{-t}) \leq h(t) \leq \frac{\pi}{2}\,,\\
    -t \leq \ell(t) \leq -t+ \log 2\,.
\end{eqnarray}

\subsection{$O(n)$-invariance}

Let us give $\R^{n+1}$ the coordinates $(x,y,z)\in \R\times\R\times\R^{n-1}$. By $O(n)$-invariance of a hypersurface $\Sigma^n \subset \R^{n+1}$, we will mean that it is invariant under the action of $O(n)$ on the $(y,z)$ hyperplane; %(i.e. axial symmetry about the $x$-axis). 
this of course means that $\Sigma^n=\{xe_1+y\eta : xe_1+ye_2\in \Sigma^n\cap \E^2, \eta \in S^n\cap\{0\}\times\R^{n}\}$, where $\E^2\coloneqq \R^2\times\{0\}\subset\R^{n+1}$. We will refer to the curve $\Sigma^n\cap \E^2$ as the \emph{profile curve} of $\Sigma^n$.

%More precisely, we mean the following: Define $\E^2 \coloneqq \{(x,y,z)\in \R\times\R\times\R^{n-1}:z=0\}$, $\mathbb{F}^n \coloneqq \{(x,y,z)\in\R\times\R\times\R^{n-1}:x=0\}$ and $\Sigma'=\Sigma^n \cap \E^2$. Let $e_1,e_2$ be unit vectors in the positive $x,y$ directions;identify the set of unit vectors in $\mathbb{F}^n$ with the unit sphere $S^{n-1}$. Then we say $\Sigma^n$ is $O(n)$-invariant if $\Sigma^n= \{xe_1+y \eta : xe_1+ye_2\in \Sigma', \eta \in S_n\}$

Conversely, given a smooth  convex curve $\Gamma\subset \R^2$ that is reflection symmetric about the $x$-axis, we can form an $O(n)$-invariant hypersurface by revolving $\Gamma\subset \R^2\times\R^{n-1}$ about the $x$-axis. This hypersurface has only two distinct principal curvatures, $\kappa$, the curvature of $\Gamma$ (multiplicity one), and $\lambda$, the ``rotational'' curvature (multiplicity $n-1$), which is given by
\[\lambda=
\begin{cases}
    \frac{\cos\theta}{y}, & \theta\neq \frac{\pi}{2},\frac{3\pi}{2}\\
    \kappa, & \theta=  \frac{\pi}{2},\frac{3\pi}{2}\,.
\end{cases}\]
Note that $\lambda$ is a smooth function on $S^1=\R/2\pi\Z$ and satisfies
\begin{equation}\label{lambdatheta}
    \kappa\lambda_\theta=-\lambda\tan\theta(\kappa-\lambda)\,.
\end{equation}

\subsubsection{Notation} 

%Returning to the function $\phi=\phi(\kappa_1,\ldots,\kappa_n)$ of the principal curvatures, we remark here that g
Given an $O(n)$-invariant hypersurface with profile curvature $\kappa$ and rotational curvatures $\lambda$, we shall, by an abuse of notation, write $\phi(\kappa,\lambda)$ to mean $ \phi(\kappa,\lambda,\ldots,\lambda)$. On an $O(n)$-invariant flow with profile curvature $\kappa(\theta,t)$ and rotational curvatures $\lambda(\theta,t)$, we also sometimes use $\phi(\theta,t)$ to mean $\phi(\kappa(\theta,t),\lambda(\theta,t))$; the distinction will be clear from context, however. %because when the first argument is a multiple of $\pi/2$, we are talking about $\phi(\theta,t)$. 
By $\phi_t$ and $\phi_\theta$, etc, we mean the $t$ and $\theta$ derivatives, respectively, of this function. By $\phi_\kappa$ and $\phi_\lambda$, etc, we mean the derivatives of the two-variable function $\phi(\kappa,\lambda)$ with respect to its first and second arguments, respectively.

\subsection{$O(n)$-invariance and curvature flows}

We assume in this section that $\phi\in C^\infty(\Gamma_+)$ is an admissible (but not necessarily non-degenerate) speed function.

We start by computing the evolution equations for $\kappa,\lambda,\phi$ and the enclosed area $A$.

\begin{lemma} \label{evoeq}
Along any $O(n)$-invariant solution to the flow \eqref{eq:F},
\begin{align}\label{eq:evolve kappa}
\kappa_t={}& \kappa^2 \phi_\kappa \kappa_{\theta \theta}+ \kappa^2 (\phi_{\kappa\kappa}\kappa_\theta^2 + 2\phi_{\kappa \lambda} \kappa_\theta \lambda_\theta + \phi_{\lambda \lambda} \lambda_\theta^2)\nonumber\\
{}&+\kappa \phi_\lambda (-\kappa_\theta \lambda_\theta-\lambda\tan\theta(\kappa_\theta-2\lambda_\theta))+\kappa(\phi_\kappa\kappa^2 + \phi_\lambda \lambda^2)\,,
\end{align}
\begin{equation}\label{eq:evolve lambda}\lambda_t= \kappa^2 \phi_\kappa \lambda_{\theta\theta}-\lambda(\phi+\phi_\kappa\kappa)\tan\theta \lambda_\theta +\lambda (\phi_\kappa \kappa^2 + \phi_\lambda \lambda^2)\,,
\end{equation}
\begin{equation}\label{eq:evolve phi}\phi_t= \phi_\kappa\kappa^2 \phi_{\theta\theta}-\phi_\lambda\lambda^2 \tan\theta \phi_\theta + \phi(\phi_\kappa \kappa^2 + \phi_\lambda \lambda^2)\,,
\end{equation}
and
\begin{equation}\label{eq:evolve A}
-\frac{dA}{dt}=\int_0^{2\pi} \phi\,d\theta\,.
\end{equation}
\end{lemma}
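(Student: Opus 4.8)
The plan is to reduce everything to a computation for the \emph{profile curve} $\gamma_t\coloneqq\partial\Omega_t\cap\E^2$. By $O(n)$-invariance, along $\gamma_t$ the unit normal $\nu$ of $\partial\Omega_t$ lies in $\E^2$ and agrees with the unit normal of $\gamma_t$, so $\gamma_t$ evolves in $\E^2$ by the planar curve flow $\partial_t\gamma=-\phi\,\nu$ with normal speed $\phi=\phi(\kappa,\lambda)$, where $\kappa$ is the curvature of $\gamma_t$ and $\lambda=\cos\theta/y$ its rotational curvature. For each fixed $t$, $\lambda$ is a function of the position on $\gamma_t$ (of $\theta$ and $y$) obeying the first-order identity \eqref{lambdatheta}. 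So all four equations will come out of the standard evolution of a convex planar curve under a prescribed normal speed, written in the turning-angle gauge, after expanding $\phi$ by the chain rule and simplifying with \eqref{lambdatheta}, its $\theta$-derivative, and the Euler relation $\phi=\kappa\phi_\kappa+\lambda\phi_\lambda$ (valid by $1$-homogeneity). Throughout, ``$\kappa_t$'', ``$\lambda_t$'', ``$\phi_t$'' mean $\partial_t$ at fixed $\theta$, and subscripts $\theta$ on $\phi$ denote total $\theta$-derivatives of $\theta\mapsto\phi(\kappa(\theta,t),\lambda(\theta,t))$.

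The organising step is to first prove the two ``compact'' identities
\[
\kappa_t=\kappa^2\bigl(\phi_{\theta\theta}+\phi\bigr),\qquad\lambda_t=\lambda^2\bigl(\phi-\tan\theta\,\phi_\theta\bigr).
\]
The first is classical: from $\partial_t\gamma=-\phi\nu$ one gets $\partial_t(ds)=-\kappa\phi\,ds$ (hence $[\partial_t,\partial_s]=\kappa\phi\,\partial_s$) and $\partial_tT=-\phi_s\nu$, $\partial_t\nu=\phi_sT$; differentiating the Frenet relation $\partial_sT=-\kappa\nu$ in $t$, using the commutator, and comparing normal components gives the curvature evolution at a material point, which becomes $\kappa_t=\kappa^2(\phi_{\theta\theta}+\phi)$ after passing to the turning-angle gauge (in which the material-point time derivative is $\partial_t+\kappa\phi_\theta\,\partial_\theta$ and $\partial_s=\kappa\partial_\theta$). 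For the second, differentiate $\lambda=\cos\theta/y$ at fixed $\theta$: the distance $y$ to the axis evolves at a material point by $\partial_t y=-\phi\langle\nu,e_y\rangle=-\phi\cos\theta$ (the last equality being the definition of $\lambda$, since $\langle\nu,e_y\rangle=y\lambda$), so in the turning-angle gauge $\partial_t y=-\phi\cos\theta+\phi_\theta\sin\theta$ using $\kappa\,y_\theta=-\sin\theta$; then $\lambda_t=-y^{-2}\cos\theta\,\partial_t y$ collapses, via $\cos\theta/y=\lambda$ and $\sin\theta/y=\lambda\tan\theta$, to $\lambda_t=\lambda^2(\phi-\tan\theta\,\phi_\theta)$. (Alternatively, $\lambda_t$ can be read off the general hypersurface identities $\partial_th_{ij}=\nabla_i\nabla_j\phi-\phi(h^2)_{ij}$, $\partial_tg_{ij}=-2\phi h_{ij}$ restricted to the $(n-1)$-fold rotational block, using that $\phi$ is constant along the rotational orbits.)

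Granting these, \eqref{eq:evolve phi} follows at once from the chain rule,
\[
\phi_t=\phi_\kappa\kappa_t+\phi_\lambda\lambda_t=\phi_\kappa\kappa^2\phi_{\theta\theta}-\phi_\lambda\lambda^2\tan\theta\,\phi_\theta+\phi\bigl(\phi_\kappa\kappa^2+\phi_\lambda\lambda^2\bigr),
\]
and \eqref{eq:evolve A} follows from $\partial_t(ds)=-\kappa\phi\,ds$ and $d\theta=\kappa\,ds$ (equivalently: differentiate $A=\int_0^{2\pi}\kappa^{-1}\,d\theta$ and substitute $\kappa_t=\kappa^2(\phi_{\theta\theta}+\phi)$, the $\phi_{\theta\theta}$-term integrating to zero by $2\pi$-periodicity). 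Finally, \eqref{eq:evolve kappa} and \eqref{eq:evolve lambda} are just the two compact identities written out: expand $\phi_{\theta\theta}=\phi_{\kappa\kappa}\kappa_\theta^2+2\phi_{\kappa\lambda}\kappa_\theta\lambda_\theta+\phi_{\lambda\lambda}\lambda_\theta^2+\phi_\kappa\kappa_{\theta\theta}+\phi_\lambda\lambda_{\theta\theta}$ and $\phi_\theta=\phi_\kappa\kappa_\theta+\phi_\lambda\lambda_\theta$, eliminate $\lambda_{\theta\theta}$ by differentiating \eqref{lambdatheta} once (which expresses $\kappa\lambda_{\theta\theta}$ through $\kappa,\lambda,\theta,\kappa_\theta,\lambda_\theta$ once one re-uses \eqref{lambdatheta} to rewrite the resulting $\sec^2\theta$ terms), and rewrite the reaction terms $\kappa^2\phi$ and $\lambda^2\phi$ using the Euler relation.

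The one genuinely delicate point is this final repackaging --- it is routine but must be done with care: the turning angle of a material point is \emph{not} conserved, so the reparametrization term must be tracked correctly, and \eqref{lambdatheta} must be used at both first and second order, with the $\sec^2\theta$ (equivalently $\tan^2\theta$) contributions cancelling against one another, to land on exactly the polynomial expressions stated. The degeneration of the ellipticity of $\phi$ at $\partial\Gamma_+$ plays no role, since all of the above are smooth identities on the region $\{\kappa,\lambda>0\}$; smoothness of the right-hand sides across $\theta=\tfrac\pi2,\tfrac{3\pi}2$ (where $\lambda=\kappa$) and $\theta=0,\pi$ is subsumed in the observation, made just below \eqref{lambdatheta}, that $\lambda$ is a smooth function on $S^1$.
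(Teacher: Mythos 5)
Your proposal is correct, and it arrives at the same two compact intermediate identities
\[
\kappa_t=\kappa^2(\phi_{\theta\theta}+\phi)\,,\qquad \lambda_t=\lambda^2(\phi-\phi_\theta\tan\theta)\,,
\]
as the paper's proof does, but by a genuinely different route. The paper obtains these by working in the Gauss map parametrization of the support function $\sigma$: there the flow reads simply $\partial_t\sigma=-\phi$, the Gauss map angle is \emph{automatically} held fixed under the flow, and the principal radii are the second-order expressions $\kappa^{-1}=\sigma_{\theta\theta}+\sigma$, $\lambda^{-1}=\sigma-\sigma_\theta\tan\theta$, so differentiating these in $t$ instantly produces the compact identities with no reparametrization bookkeeping. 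You instead work at the level of the profile curve with the Frenet formalism: you derive $\kappa_t=\kappa^2(\phi_{\theta\theta}+\phi)$ from the evolution of $\kappa$ at a material point transferred to the turning-angle gauge, and $\lambda_t=\lambda^2(\phi-\phi_\theta\tan\theta)$ by differentiating $\lambda=\cos\theta/y$ together with $\partial_t y=-\phi\cos\theta$ at a material point and the reparametrization correction $\phi_\theta\sin\theta$ coming from $\kappa y_\theta=-\sin\theta$. This is correct, and the subsequent steps --- chain rule to get \eqref{eq:evolve phi}, expansion of $\phi_{\theta\theta}$ and $\phi_\theta$ together with elimination of $\lambda_{\theta\theta}$ by differentiating \eqref{lambdatheta} and the Euler relation $\phi=\kappa\phi_\kappa+\lambda\phi_\lambda$ for the reaction terms, and $\partial_t(ds)=-\kappa\phi\,ds$ or $A=\int\kappa^{-1}d\theta$ for \eqref{eq:evolve A} --- coincide with the paper's. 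The tradeoff is exactly the one you flag at the end: the support-function derivation avoids the material-vs-fixed-$\theta$ distinction entirely, while your version is more elementary (it never introduces $\sigma$) at the cost of having to carry the transport term $\kappa\phi_\theta\partial_\theta$ correctly through both computations. Either is a legitimate proof.
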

\begin{proof}
With respect to the Gauss map parametrization, the support function $\sigma(\cdot,t):S^n\to\R$ of the solution evolves as
\[\partial_t \sigma=-\phi\]
and %(since the support function of the profile curve of a convex $O(n)$-invariant hypersurface is the restriction of the support function of the hypersurface to the normal set of the profile curve) 
we can express the principal curvatures $\kappa,\lambda$ %in terms of the support function $\sigma$ of the profile curve using the equations
as
\begin{align*}
    \kappa^{-1}&=\sigma_{\theta\theta}+\sigma\\
    \lambda^{-1}&=\sigma-\sigma_\theta \tan\theta\,.
\end{align*}

Differentiating with respect to $t$ gives
\begin{align*}
   \kappa_t&=\kappa^2(\phi_{\theta\theta}+\phi)\,,\\
   \lambda_t&=\lambda^2(\phi-\phi_\theta \tan\theta).
\end{align*}
Formulae \eqref{eq:evolve kappa}, \eqref{eq:evolve lambda} are obtained now by using the chain rule and relating $\lambda_{\theta\theta}$ to lower order terms by differentiating \eqref{lambdatheta}.

The evolution equation for $\phi$ follows from those for $\kappa$ and $\lambda$ since
\[
\phi_t=\phi_\kappa\kappa_t+\phi_\lambda\lambda_t\,.
\]
%Alternatively, one may apply the well-known formula
%\[
%(\partial_t-\dot\phi^{ij}\nabla_i\nabla_j)\phi=\dot\phi^{ij}\sff^2_{ij}\,,
%\]
%where $\sff$ is the second fundamental form and $\dot\phi^{ij}$ is the derivative of $\phi$ with respect to $\sff_{ij}$.

The identity \eqref{eq:evolve A} is just the usual first variation of enclosed area.
\end{proof}

The following fundamental lemmas apply to all $O(n)$-invariant solutions.

\begin{lemma}\label{lem:KappaOverLambda}
The ratio $u \coloneqq \kappa / \lambda$ evolves under \eqref{eq:F} according to
\begin{align}\label{eq:KappaOverLambda}
u_t-{}&\kappa^2 \phi_\kappa u_{\theta \theta}-2 \phi_\kappa \kappa^2 (\lambda_\theta/\lambda)u_\theta\nonumber\\
{}&= \lambda^{-1}D^2\phi((\kappa_\theta,\lambda_\theta),(\kappa_\theta,\lambda_\theta))-\phi_\lambda \lambda^2 \tan\theta u_\theta -2\phi \tan^2\theta (\kappa-\lambda)\,.
\end{align}
\end{lemma}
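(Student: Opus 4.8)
The plan is to derive \eqref{eq:KappaOverLambda} directly from the evolution equations \eqref{eq:evolve kappa} and \eqref{eq:evolve lambda} of Lemma~\ref{evoeq} via the quotient rule, using the structural identity \eqref{lambdatheta} and Euler's relation for the $1$-homogeneous speed to collapse the lower-order terms. I would begin by recording the consequences of differentiating $u=\kappa/\lambda$ in $\theta$ and $t$: namely $\kappa_\theta=\lambda u_\theta+u\lambda_\theta$, $\kappa_{\theta\theta}=\lambda u_{\theta\theta}+2u_\theta\lambda_\theta+u\lambda_{\theta\theta}$, and $u_t=\lambda^{-1}(\kappa_t-u\lambda_t)$. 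Substituting \eqref{eq:evolve kappa} and \eqref{eq:evolve lambda} into the last of these is the whole computation; everything after is organization.

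Two observations do most of the organizing. For the principal part, the second-order terms combine as $\kappa^2\phi_\kappa(\kappa_{\theta\theta}-u\lambda_{\theta\theta})=\kappa^2\phi_\kappa(\lambda u_{\theta\theta}+2u_\theta\lambda_\theta)$, so that after dividing by $\lambda$ one reproduces exactly the operator $\kappa^2\phi_\kappa u_{\theta\theta}+2\phi_\kappa\kappa^2(\lambda_\theta/\lambda)u_\theta$ on the left of \eqref{eq:KappaOverLambda}. For the reaction, since $u\lambda=\kappa$ the term $\kappa(\phi_\kappa\kappa^2+\phi_\lambda\lambda^2)$ in $\kappa_t$ is cancelled precisely by $u$ times the term $\lambda(\phi_\kappa\kappa^2+\phi_\lambda\lambda^2)$ in $\lambda_t$. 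After these cancellations, what remains on the right is the term of \eqref{eq:KappaOverLambda} involving the second derivatives of $\phi$, inherited (after division by $\lambda$) from the Hessian term of \eqref{eq:evolve kappa}, together with every term that is linear in $\kappa_\theta$ and $\lambda_\theta$, contributed by both $\kappa_t$ and $\lambda_t$.

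The only step needing care is to show that this collection of first-order terms collapses to $-\phi_\lambda\lambda^2\tan\theta\,u_\theta-2\phi\tan^2\theta(\kappa-\lambda)$. Here I would replace $\kappa_\theta$ everywhere by $\lambda u_\theta+u\lambda_\theta$ and then use \eqref{lambdatheta}, in the form $\lambda_\theta=-\lambda\tan\theta\,(\kappa-\lambda)/\kappa$ (the values $\theta=\tfrac{\pi}{2},\tfrac{3\pi}{2}$ being covered by smoothness of the profile curvatures), to express each remaining $\lambda_\theta$ through $u=\kappa/\lambda$ and $\tan\theta$. The resulting coefficient of $u_\theta$ simplifies --- using elementary identities such as $(\kappa-\lambda)/\kappa-1=-\lambda/\kappa=-1/u$ --- to exactly $-\phi_\lambda\lambda^2\tan\theta$, while the terms carrying no $\theta$-derivative collect into $-(\phi+\kappa\phi_\kappa+\lambda\phi_\lambda)\tan^2\theta(\kappa-\lambda)$; Euler's identity $\kappa\phi_\kappa+\lambda\phi_\lambda=\phi$ for the $1$-homogeneous speed then turns this into the claimed $-2\phi\tan^2\theta(\kappa-\lambda)$. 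I expect this last bookkeeping to be the main obstacle: nothing is conceptually hard, but several groups of terms must be tracked at once, and the two structural inputs --- equation \eqref{lambdatheta} (encoding the $O(n)$-symmetry) and Euler's relation (encoding the homogeneity) --- must be invoked at exactly the right moments for the cancellations to close up.
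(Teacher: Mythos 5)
Your proposal is correct and is precisely the direct calculation that the paper's one-line proof refers to: differentiate $u=\kappa/\lambda$, substitute \eqref{eq:evolve kappa}--\eqref{eq:evolve lambda}, observe the two cancellations you identify, and then use \eqref{lambdatheta} together with Euler's identity $\kappa\phi_\kappa+\lambda\phi_\lambda=\phi$ to collapse the remaining first-order terms (I have checked that they indeed reduce to $-\phi_\lambda\lambda^2\tan\theta\,u_\theta-2\phi\tan^2\theta(\kappa-\lambda)$). One byproduct of carrying your computation through carefully: the Hessian term of \eqref{eq:evolve kappa} carries a factor of $\kappa^2$, so after dividing by $\lambda$ the coefficient should read $\kappa^2\lambda^{-1}$ rather than the $\lambda^{-1}$ appearing in \eqref{eq:KappaOverLambda} --- a likely typo in the statement, which is harmless since Lemma~\ref{lem:u crit} and Corollary~\ref{cor:sturm} only use that this term vanishes at critical points of $u$.
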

\begin{proof}
This follows by direct calculation.
\end{proof}

\begin{lemma} \label{lem:u crit}
At a critical point of $u$, $D^2\phi((\kappa_\theta,\lambda_\theta),(\kappa_\theta,\lambda_\theta))=0$.
\end{lemma}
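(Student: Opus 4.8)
The plan is to combine two elementary facts: (i) at a critical point of $u=\kappa/\lambda$ the vector $(\kappa_\theta,\lambda_\theta)$ is a scalar multiple of $(\kappa,\lambda)$; and (ii) by $1$-homogeneity of $\phi$, the Hessian $D^2\phi$ annihilates $(\kappa,\lambda)$.

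First I would record the homogeneity identities. Since $\phi$ is symmetric and $1$-homogeneous on $\Gamma_+$, its two-variable reduction $(\kappa,\lambda)\mapsto\phi(\kappa,\lambda)=\phi(\kappa,\lambda,\dots,\lambda)$ is again $1$-homogeneous, so Euler's identity gives $\phi_\kappa\kappa+\phi_\lambda\lambda=\phi$. Differentiating this identity in $\kappa$ and in $\lambda$ respectively yields $\phi_{\kappa\kappa}\kappa+\phi_{\kappa\lambda}\lambda=0$ and $\phi_{\kappa\lambda}\kappa+\phi_{\lambda\lambda}\lambda=0$; that is, $D^2\phi\cdot(\kappa,\lambda)=0$.

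Next, at a critical point of $u$ we have $0=u_\theta=(\kappa_\theta\lambda-\kappa\lambda_\theta)\lambda^{-2}$, hence $\kappa_\theta\lambda=\kappa\lambda_\theta$. Since $\lambda>0$ there (and $\kappa>0$, so that $D^2\phi$ is defined), this says $(\kappa_\theta,\lambda_\theta)=c\,(\kappa,\lambda)$ with $c=\lambda_\theta/\lambda$. Substituting into the quadratic form and using the two identities above,
\[
D^2\phi\big((\kappa_\theta,\lambda_\theta),(\kappa_\theta,\lambda_\theta)\big)=c^2\big[\kappa(\phi_{\kappa\kappa}\kappa+\phi_{\kappa\lambda}\lambda)+\lambda(\phi_{\kappa\lambda}\kappa+\phi_{\lambda\lambda}\lambda)\big]=0,
\]
which is the claim.

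I do not anticipate any substantive obstacle here: the whole argument is a two-line computation once one notices that $1$-homogeneity descends to the two-variable restriction of $\phi$. The only point deserving a word of care is precisely that one differentiates the reduced speed $\phi(\kappa,\lambda,\dots,\lambda)$ rather than the full $n$-variable speed — but since restriction to the subspace $\{z_2=\dots=z_n\}$ preserves $1$-homogeneity, Euler's identity and its derivatives apply verbatim.
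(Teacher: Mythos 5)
Your proof is correct and follows essentially the same route as the paper's: show that $(\kappa_\theta,\lambda_\theta)$ is proportional to $(\kappa,\lambda)$ at a critical point of $u$, and then invoke $1$-homogeneity to kill the Hessian in that direction. Your version is slightly more explicit in that you differentiate Euler's identity to show $D^2\phi$ annihilates $(\kappa,\lambda)$ as a linear map (rather than just quoting the vanishing of the quadratic form), but this is the same underlying argument.
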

\begin{proof}
At a critical point of $u$, we have 
\begin{align*}
&u_\theta=0\\ 
\implies &\lambda \kappa_\theta-\kappa \lambda_\theta=0\\
\implies &\kappa_\theta/\lambda_\theta=\kappa/\lambda\,,
\end{align*}
which means that $(\kappa_\theta,\lambda_\theta)$ is a multiple of $(\kappa,\lambda)$. Now since $\phi$ is 1-homogeneous, we have that $D^2\phi((\kappa,\lambda),(\kappa,\lambda))=0$ and the claim follows.
\end{proof}

\begin{corollary}\label{cor:sturm}
The inequality $\kappa\ge\lambda$ is preserved along $O(n)$-invariant solutions to \eqref{eq:F}.
\end{corollary}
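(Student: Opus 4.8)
The plan is to establish the claim via the parabolic maximum principle applied to $u=\kappa/\lambda$, exploiting the evolution equation of Lemma~\ref{lem:KappaOverLambda} and the cancellation recorded in Lemma~\ref{lem:u crit}. Since the profile curve is smooth and strictly convex we have $\lambda>0$, so $\kappa\ge\lambda$ is equivalent to $u\ge 1$, and it suffices to show that $\{u\ge 1\}$ is preserved. Two features of \eqref{eq:KappaOverLambda} make the argument go through despite the nonlinearity (and possible non-convexity) of $\phi$: the reaction term $-2\phi\tan^2\theta(\kappa-\lambda)$ has a sign --- it is non-negative wherever $\kappa\le\lambda$, since $\phi>0$ --- and, by Lemma~\ref{lem:u crit}, the ``bad'' second-order term $\lambda^{-1}D^2\phi((\kappa_\theta,\lambda_\theta),(\kappa_\theta,\lambda_\theta))$ vanishes at every critical point of $u$ in $\theta$, in particular at a spatial minimum.

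Concretely: assume $u(\cdot,t_1)\ge 1$ and fix a later time $t_2$ in the interval of existence. For $\varepsilon>0$ put $u_\varepsilon:=u+\varepsilon e^{t}$. If $\min_{S^1}u_\varepsilon(\cdot,t)>1$ throughout $[t_1,t_2]$ we are done after letting $\varepsilon\downarrow 0$, so suppose instead that $\min_{S^1}u_\varepsilon(\cdot,t)$ first reaches the value $1$ at some time $t_0\in(t_1,t_2]$ (note $t_0>t_1$, since $\min_{S^1}u_\varepsilon(\cdot,t_1)\ge 1+\varepsilon e^{t_1}>1$), and let $\theta_0$ be a point realising this minimum. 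Then $\theta_0$ is an interior spatial minimum of $u_\varepsilon(\cdot,t_0)$ on the circle, so $u_\theta(\theta_0,t_0)=0$ and $u_{\theta\theta}(\theta_0,t_0)\ge 0$; moreover $\partial_t u_\varepsilon(\theta_0,t_0)\le 0$, and $u(\theta_0,t_0)=1-\varepsilon e^{t_0}<1$, i.e.\ $\kappa<\lambda$ at $(\theta_0,t_0)$. Evaluating \eqref{eq:KappaOverLambda} at this point: the two terms carrying a factor $u_\theta$ drop out, the $D^2\phi$ term drops out by Lemma~\ref{lem:u crit}, the diffusion term $\kappa^2\phi_\kappa u_{\theta\theta}$ is non-negative by ellipticity ($\phi_\kappa>0$), and the reaction term $-2\phi\tan^2\theta(\kappa-\lambda)$ is non-negative since $\kappa<\lambda$. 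Hence $u_t(\theta_0,t_0)\ge 0$, so $\partial_t u_\varepsilon(\theta_0,t_0)\ge\varepsilon e^{t_0}>0$, contradicting $\partial_t u_\varepsilon(\theta_0,t_0)\le 0$. Letting $\varepsilon\downarrow 0$ and then $t_2$ tend to the right endpoint of the interval of existence completes the proof.

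The step I expect to be the main (essentially the only) obstacle is the degeneration of the coefficients of \eqref{eq:KappaOverLambda} at $\theta=\tfrac{\pi}{2}$ and $\theta=\tfrac{3\pi}{2}$, where $\tan\theta$ blows up. The resolution is to observe that at these two angles one has $\lambda=\kappa$ by the very definition of the rotational curvature, so $u\equiv 1$ there and hence $u_\varepsilon=1+\varepsilon e^{t}>1$ at those points; therefore the first-touching point $\theta_0$ produced above automatically avoids $\{\tfrac{\pi}{2},\tfrac{3\pi}{2}\}$, and \eqref{eq:KappaOverLambda} may be applied at $(\theta_0,t_0)$ with all coefficients finite. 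One could alternatively run the weak maximum principle on each of the two arcs into which $\tfrac{\pi}{2}$ and $\tfrac{3\pi}{2}$ divide $S^1$, on which $u-1$ has vanishing boundary data; either way, the $\varepsilon$-perturbation is needed only to promote the inequality ``$u_t\ge 0$ at a spatial minimum'' to a genuine contradiction.
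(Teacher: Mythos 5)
Your argument is correct and is essentially the paper's proof: the same auxiliary function (up to the inessential shift by $1$; the paper uses $u^\varepsilon=u-1+\varepsilon e^t$), the same first-touching maximum-principle argument, and the same two structural inputs --- the vanishing of the $D^2\phi$ term at a spatial critical point of $u$ (Lemma~\ref{lem:u crit}) and the favourable sign of the reaction term $-2\phi\tan^2\theta\,(\kappa-\lambda)$. Your explicit observation that the first-touching point must avoid $\theta=\tfrac{\pi}{2},\tfrac{3\pi}{2}$ (since $u\equiv 1$ there, forcing $u_\varepsilon>1$) is a nice clarification of a point the paper leaves implicit.
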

\begin{proof}
Given $\varepsilon>0$, consider the function $u^\varepsilon\coloneqq u-1+\varepsilon\mathrm{e}^{t}$, where $u\coloneqq \kappa/\lambda$. By hypothesis, $u^{\varepsilon}(\cdot,0)\ge \varepsilon>0$. We claim that $u_{\varepsilon}$ remains positive for all positive times. Suppose, to the contrary, that there is some $t_0>0$ and some $\theta_0\in S^1$ such that $u^{\varepsilon}(\theta_0,t_0)=0$ but $\min_{S^1}u^{\varepsilon}>0$ for all $t<t_0$. In particular, 
\[
0\ge u^\varepsilon_{t}\,,\;\ 0\ge -u^\varepsilon_{\theta\theta}\,,\;\;\text{and}\;\; 0=u^\varepsilon_{\theta}=u_{\theta}\,,
\]
at $(\theta_0,t_0)$, and hence, by \eqref{eq:KappaOverLambda} and Lemma \ref{lem:u crit},
\begin{align*}
0\ge{}&u^\varepsilon_t-\kappa^2 \phi_\kappa u^\varepsilon_{\theta \theta}-2 \phi_\kappa \kappa^2 (\lambda_\theta/\lambda)u^\varepsilon_\theta\\
={}&\varepsilon\mathrm{e}^{t_0}+2\varepsilon\mathrm{e}^{t_0}\phi\lambda\tan^2\theta\\
>{}&0
\end{align*}
at $(\theta_0,t_0)$, yielding a contradiction. So $u^\varepsilon$ remains positive and hence, taking $\varepsilon\to 0$, we conclude that $u-1$ remains non-negative.
\end{proof}

\begin{corollary} \label{lambdanondec}
    $\lambda(\cdot,t)$ is nondecreasing on $[\frac{\pi}{2},\pi]$ for each $t$.
\end{corollary}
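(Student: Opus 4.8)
The plan is to read off the sign of $\lambda_\theta$ on $[\tfrac{\pi}{2},\pi]$ directly from the structural identity \eqref{lambdatheta}, using the pinching $\kappa\ge\lambda$ furnished by Corollary \ref{cor:sturm}. For $\theta\notin\{\tfrac{\pi}{2},\tfrac{3\pi}{2}\}$, where $\tan\theta$ is finite and $\kappa>0$ by convexity, I would rewrite \eqref{lambdatheta} in the form
\[
\lambda_\theta=-\frac{\lambda}{\kappa}\,\tan\theta\,(\kappa-\lambda)\,.
\]
On the open interval $(\tfrac{\pi}{2},\pi)$ every factor on the right has a definite sign: $\lambda\ge 0$ and $\kappa>0$ since $\lambda,\kappa$ are principal curvatures of the convex hypersurface $\partial\Omega_t$, $\kappa-\lambda\ge 0$ by Corollary \ref{cor:sturm}, and $\tan\theta<0$. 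Hence $\lambda_\theta=(-\tan\theta)\cdot\tfrac{\lambda}{\kappa}(\kappa-\lambda)\ge 0$ on $(\tfrac{\pi}{2},\pi)$.

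It then remains only to pass to the closed interval. Since $\lambda(\cdot,t)$ is smooth on $S^1=\R/2\pi\Z$, its derivative $\lambda_\theta(\cdot,t)$ is continuous there, so the inequality $\lambda_\theta\ge 0$ extends by continuity across the endpoint $\theta=\tfrac{\pi}{2}$ (where $\tan\theta$ has a pole but $\kappa-\lambda$ vanishes) and to the endpoint $\theta=\pi$ (where in fact $\tan\pi=0$, so $\lambda_\theta=0$). Thus $\lambda_\theta(\cdot,t)\ge 0$ on all of $[\tfrac{\pi}{2},\pi]$, and integrating in $\theta$ gives that $\lambda(\cdot,t)$ is nondecreasing on $[\tfrac{\pi}{2},\pi]$ for each fixed $t$.

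There is essentially no genuine obstacle here: the argument reduces to a one-line sign check once \eqref{lambdatheta} and Corollary \ref{cor:sturm} are available. The only point requiring any care is the behaviour at $\theta=\tfrac{\pi}{2}$, where one should not manipulate $\tan\theta$ naively; working on the open interval first and invoking continuity of $\lambda_\theta$ (which is guaranteed by smoothness of $\lambda$) disposes of this cleanly.
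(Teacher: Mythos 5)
Your proof is correct and is exactly the argument the paper intends (its own proof is the single sentence ``This is a combination of \eqref{lambdatheta} and Corollary \ref{cor:sturm}''); you simply supply the sign check on $(\tfrac{\pi}{2},\pi)$ and the continuity argument at the endpoint $\theta=\tfrac{\pi}{2}$, which is the right way to fill in the details.
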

\begin{proof}
    This is a combination of (\ref{lambdatheta}) and Corollary \ref{cor:sturm}
\end{proof}

\begin{corollary}\label{cor:pinching preserved}
The inequality $\kappa/\lambda\le C$ is preserved along $O(n)$-invariant solutions to \eqref{eq:F} which satisfy $\kappa\ge\lambda$.
\end{corollary}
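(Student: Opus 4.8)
The statement to prove is Corollary \ref{cor:pinching preserved}: the inequality $\kappa/\lambda \le C$ is preserved along $O(n)$-invariant solutions to \eqref{eq:F} which satisfy $\kappa \ge \lambda$.

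The plan is to run the same maximum-principle argument used for Corollary \ref{cor:sturm}, but now applied from above to $u = \kappa/\lambda$ rather than from below. Starting from the evolution equation \eqref{eq:KappaOverLambda} for $u$, the key observation is that at an interior spatial maximum of $u$ (at some time $t_0$), we have $u_\theta = 0$, $u_{\theta\theta} \le 0$, and $u_t \ge 0$. The terms $-\kappa^2\phi_\kappa u_{\theta\theta}$ and $-2\phi_\kappa\kappa^2(\lambda_\theta/\lambda)u_\theta$ and $-\phi_\lambda\lambda^2\tan\theta\, u_\theta$ either vanish or have a favorable sign; by Lemma \ref{lem:u crit} the second-derivative term $\lambda^{-1}D^2\phi((\kappa_\theta,\lambda_\theta),(\kappa_\theta,\lambda_\theta))$ vanishes at a critical point of $u$; and the remaining term $-2\phi\tan^2\theta(\kappa - \lambda)$ is $\le 0$ precisely because $\kappa \ge \lambda$ and $\phi > 0$. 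Hence at such a maximum $u_t \le 0$, so $\max_{S^1} u(\cdot,t)$ is non-increasing. To make this rigorous against the possibility of the maximum being attained "at infinity in time" or at a degenerate first-touching time, I would introduce the perturbation $u^\varepsilon \coloneqq u - C - \varepsilon\mathrm{e}^{t}$ exactly as in the proof of Corollary \ref{cor:sturm}: if $u \le C$ initially then $u^\varepsilon(\cdot,0) < 0$, and at a hypothetical first time $t_0$ where $\max u^\varepsilon = 0$ one computes, using the sign analysis above, that the parabolic operator applied to $u^\varepsilon$ is strictly negative there, contradicting that $t_0$ is a first zero; then let $\varepsilon \to 0$.

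One point that needs care is that $S^1$ is compact with no boundary, so there is genuinely an interior maximum and no boundary terms arise — but one should note that at the poles $\theta = \pi/2, 3\pi/2$ we have $\kappa = \lambda$, so $u = 1 \le C$ there automatically, and in any case $\tan\theta$ blows up there; the factor $\tan^2\theta(\kappa - \lambda)$ and $\phi_\lambda\lambda^2\tan\theta\, u_\theta$ should be understood in the limiting sense via \eqref{lambdatheta} and smoothness of $\lambda$ on $S^1$, and one can check these terms remain bounded and carry the right sign. Alternatively, since $u \equiv 1$ at the poles, one may restrict attention to a maximum occurring away from the poles (if $\max u \le 1$ there is nothing to prove since necessarily $C \ge 1$), where all terms are manifestly smooth.

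The main obstacle — really the only subtlety — is confirming the sign of the reaction term and the vanishing of the $D^2\phi$ term at the maximum; but both are already handled by Lemma \ref{lem:u crit} and the hypothesis $\kappa \ge \lambda$, so the proof is essentially a mirror image of Corollary \ref{cor:sturm} and should be short. I would simply write: "This follows by the argument proving Corollary \ref{cor:sturm}, applied to $u^\varepsilon \coloneqq u - C - \varepsilon\mathrm{e}^t$ and using that $-2\phi\tan^2\theta(\kappa-\lambda) \le 0$ when $\kappa \ge \lambda$."
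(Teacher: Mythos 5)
Your proposal is correct and takes essentially the same approach as the paper, which simply observes that the hypothesis $\kappa\ge\lambda$ makes the reaction term $-2\phi\tan^2\theta(\kappa-\lambda)$ in \eqref{eq:KappaOverLambda} nonpositive and then invokes the maximum principle as in Corollary \ref{cor:sturm}. The perturbation $u^\varepsilon=u-C-\varepsilon\mathrm{e}^t$ and the use of Lemma \ref{lem:u crit} are exactly the intended details.
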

\begin{proof}
Since the inequality $\kappa\ge\lambda$ ensures that the reaction term in \eqref{eq:KappaOverLambda} has the correct sign for preserving upper bounds, the claim follows from the maximum principle in a similar manner to Corollary \ref{cor:sturm}.
\end{proof}

A well-known argument originally due to Chou (formerly Tso) \cite{MR812353} provides an estimate for the speed for as long as the inradius remains bounded from below.
\begin{lemma}\label{lem:Chou}
If the support function $\sigma:S^n\times[0,t_0]\to\R$ of a solution $\{M_t\}_{t\in[0,t_0]}$ to \eqref{eq:F} satisfies
\[
2r\le\sigma\le R
\]
for all $t\in[0,t_0]$, then
\begin{equation}\label{eq:Chou estimate}
\phi\le \frac{R}{r}\max\left\{2\phi_1r^{-1},\max_{M_0}\phi\right\}
\end{equation}
and
\begin{equation}\label{eq:Chou interior estimate}
\phi\le C\left(2r^{-1}+t^{-\frac{1}{2}}\right)\,,
\end{equation}
where $C=C(n,\phi_1,\frac{R}{r})$ and we recall that $\phi_1\coloneqq \phi(1,\dots,1)$.
\end{lemma}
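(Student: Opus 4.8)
I would run the classical speed estimate of Tso (formerly Chou); the argument is robust and goes through for the nonlinear speed with essentially no change of strategy. The plan is to work in the parametrization $\partial_t F=-\phi\,\nu$, to abbreviate by $\mathcal{L}\coloneqq\dot\phi^{kl}\nabla_k\nabla_l$ the (elliptic) linearized operator of the flow and by $\mathcal{H}\coloneqq\dot\phi^{kl}(A^2)_{kl}\ge0$ the associated reaction coefficient, where $A$ denotes the second fundamental form, and to use two standard evolution identities --- obtained exactly as in the mean curvature case, using Codazzi, the Gauss equation, and Euler's identity $\dot\phi^{kl}A_{kl}=\phi$ (valid by $1$-homogeneity):
\[
(\partial_t-\mathcal{L})\phi=\phi\,\mathcal{H}\qquad\text{and}\qquad(\partial_t-\mathcal{L})\langle F,\nu\rangle=\mathcal{H}\,\langle F,\nu\rangle-2\phi\,,
\]
the second being the evolution of the support function $\langle F,\nu\rangle$ (based at the origin) along the moving hypersurface. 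Since $\langle F,\nu\rangle$ agrees pointwise with the Gauss-parametrized support function $\sigma$, the hypothesis reads $2r\le\langle F,\nu\rangle\le R$ on $[0,t_0]$, so that
\[
Q\coloneqq\frac{\phi}{\langle F,\nu\rangle-r}
\]
is a smooth positive function on each $M_t$ whose denominator $v\coloneqq\langle F,\nu\rangle-r$ satisfies $r\le v\le R$; both estimates will then follow from bounds on $Q_{\max}(t)\coloneqq\max_{M_t}Q$ via $\phi=Qv\le R\,Q_{\max}(t)$.

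Next I would compute the evolution of $Q$. A direct quotient-rule calculation --- in which the terms quadratic in $\nabla v$ cancel once one writes $\nabla\phi=v\,\nabla Q+Q\,\nabla v$ --- gives
\[
(\partial_t-\mathcal{L})Q=\frac{2}{v}\,\dot\phi^{kl}\nabla_k Q\,\nabla_l v\;-\;\frac{r\,\mathcal{H}\,Q}{v}\;+\;2Q^2\,,
\]
where $2Q^2$ is the usual bad reaction term and $-r\mathcal{H}Q/v$ is the gain produced by shifting the support function by the constant $r$. The crucial input is the pointwise inequality
\[
\mathcal{H}=\dot\phi^{kl}(A^2)_{kl}\;\ge\;\frac{\phi^2}{\phi_1}\,,
\]
which follows from the homogeneity (and inverse-concavity) of $\phi$ and is sharp on round spheres. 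Together with $\phi=Qv$ and $v\ge r$ this yields, pointwise, $-r\mathcal{H}Q/v+2Q^2\le-r^2\phi_1^{-1}Q^3+2Q^2$.

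The maximum principle now finishes the proof. By Hamilton's trick $Q_{\max}$ is locally Lipschitz, and at any point realizing the maximum over $M_t$ one has $\nabla Q=0$ and $\mathcal{L}Q\le0$, so in the barrier sense
\[
\frac{d}{dt}Q_{\max}\;\le\;-\frac{r^2}{\phi_1}\,Q_{\max}^3+2Q_{\max}^2\;=\;Q_{\max}^2\Bigl(2-\frac{r^2}{\phi_1}Q_{\max}\Bigr)\,.
\]
The right-hand side is negative once $Q_{\max}>2\phi_1 r^{-2}$, hence $Q_{\max}(t)\le\max\{Q_{\max}(0),\,2\phi_1 r^{-2}\}$; inserting $Q_{\max}(0)\le r^{-1}\max_{M_0}\phi$ and $\phi\le R\,Q_{\max}$ gives \eqref{eq:Chou estimate}. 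For \eqref{eq:Chou interior estimate}, I would note that whenever $Q_{\max}\ge4\phi_1 r^{-2}$ the inequality improves to $\tfrac{d}{dt}Q_{\max}\le-\tfrac12 r^2\phi_1^{-1}Q_{\max}^3$, i.e.\ $\tfrac{d}{dt}Q_{\max}^{-2}\ge r^2\phi_1^{-1}$; since $Q_{\max}$ remains below $4\phi_1 r^{-2}$ once it drops there (by the same ODE comparison), one obtains $Q_{\max}(t)\le 4\phi_1 r^{-2}+(r^2\phi_1^{-1}t)^{-1/2}$, and hence $\phi\le R\,Q_{\max}(t)\le C(2r^{-1}+t^{-1/2})$ with $C=C(n,\phi_1,R/r)$.

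I do not expect a serious obstacle: this is the standard Chou--Tso argument, and the nonlinearity intervenes only through the use of $\mathcal{L}$ and $\mathcal{H}$ in place of $\Delta$ and $|A|^2$ and through the pointwise inequality $\mathcal{H}\ge\phi^2/\phi_1$. The one step genuinely deserving care is verifying that inequality --- this is where homogeneity and the convexity-type conditions on $\phi$ are used essentially, and in full generality one may need to weaken $\phi_1^{-1}$ to a constant $c(n,\phi_1)$, which is precisely why the constant in \eqref{eq:Chou interior estimate} is allowed to depend on $n$. A secondary, purely bookkeeping, point is to keep in mind throughout that $\langle F,\nu\rangle$ here is the support function based at the origin, which the hypothesis confines to $[2r,R]$, so that $v$ is uniformly bounded above and below on all of $[0,t_0]$.
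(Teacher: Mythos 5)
Your proposal is correct and is essentially the argument the paper gives: the Chou--Tso quantity $\phi/(u-r)$, the quotient-rule evolution, the key inverse-concavity inequality $\dot\phi^{i}\kappa_i^2\ge\phi^2/\phi_1$ (the paper cites \cite[Lemma 5]{MR3070558} for this, whereas you rederive it from $1$-homogeneity and concavity of $\phi_*$ tested at $(1,\dots,1)$; both give the sharp constant), and the maximum principle. The only cosmetic difference is that the paper runs the computation in the Gauss-map parametrization with the operator $\dot\phi_*^{ij}\overline\nabla_i\overline\nabla_j$ and deduces the interior estimate by considering $t\phi/(\sigma-r)$, while you stay in the Lagrangian parametrization and extract the $t^{-1/2}$ decay by ODE comparison on $Q_{\max}$; the content is the same.
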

\begin{proof}
If we define the function $\phi_\ast:\Gamma_+\to\R$ by
\[
\phi_\ast(r)\coloneqq -\phi(r^{-1})\,,
\]
then, with respect to the Gauss map parametrization,
\[
\partial_t\sigma=-\phi=\phi_\ast(\rho_1,\dots,\rho_n)\,,
\]
where $\rho_i\coloneqq\kappa_i^{-1}$ are the principal radii. Note that, under the Gauss map parametrization, the principal radii are the eigenvalues of the tensor
\[
A\coloneqq \overline\nabla{}^2\sigma+\sigma \overline g\,,
\]
where $\overline g$ and $\overline\nabla$ are the standard metric and connection on $S^n$.

Consider the function $v\coloneqq\frac{\phi}{\sigma-r}$. If we denote by $\dot\phi^{ij}$ and $\dot\phi^i$ the derivatives of $\phi$ with respect to $\sff_{ij}$ and $\kappa_i$, respectively, and by $\dot\phi_\ast^{ij}$ and $\dot\phi_\ast^i$ the derivatives of $\phi_\ast$ with respect to $A_{ij}$ and $\rho_i$, respectively, then, at a new interior maximum of $v$, we find that
\begin{align*}
0\le{}& (\partial_t-\dot\phi_\ast^{ij}\overline\nabla_i\overline\nabla_j)\frac{\phi}{\sigma-r}\\
={}&\frac{\phi}{\sigma-r}\left(\frac{(\partial_t-\dot\phi_\ast^{ij}\overline\nabla_i\overline\nabla_j)\phi}{\phi}-\frac{(\partial_t-\dot\phi_\ast^{ij}\overline\nabla_i\overline\nabla_j)\sigma}{\sigma-r}\right)+2\dot\phi_\ast^{ij}\overline\nabla_i\frac{\phi}{\sigma-r}\overline\nabla_j\sigma\\
={}&\frac{\phi}{\sigma-r}\left(\tr(\dot\phi_\ast)+\frac{\phi+\dot\phi_\ast(A)-\sigma\tr(\dot\phi_\ast)}{\sigma-r}\right)\\
={}&\frac{\phi}{\sigma-r}\left(\dot\phi^{i}\kappa^2_i+\frac{2\phi-\dot\phi^{i}\kappa^2_i\sigma}{\sigma-r}\right)\,.
\end{align*}
That is,
\begin{align*}
\frac{\dot\phi^{i}\kappa^2_i}{\phi^2}\frac{\phi}{\sigma-r}\le\frac{2r^{-1}}{\sigma-r}\le 2r^{-2}\,.
\end{align*}
Since $\phi$ is inverse-concave, the first claim now follows from \cite[Lemma 5]{MR3070558}. 

To obtain the second claim, consider instead the ratio $\frac{t\phi}{\sigma-r}$.
\end{proof}

%{\color{red} [It would also suffice for $\phi_\lambda$ to be bounded from above. The only other thing we could try would be to look at $\frac{\kappa}{\sigma-r}$, but then we would likely need concavity of $\phi$ to control the $D^2\phi$ term. In any case, I think looking at inverse-concave speeds is fine.]}

\begin{proposition}\label{prop:round point}
Let $\Sigma$ be an $O(n)$-invariant bounded convex hypersurface of $\R^{n+1}$. If $\kappa\ge\lambda$ on $\Sigma$, then (for any admissible speed $\phi$) the solution to \eqref{eq:F} starting from $\Sigma$ contracts to a round point in finite time.
\end{proposition}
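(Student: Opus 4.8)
The plan is to run the standard Tso–Huisken–type argument for contracting convex hypersurfaces, adapted to the $O(n)$-invariant setting so that the degeneracy of the speed at $\partial\Gamma_+$ never causes trouble. First I would observe that, by short-time existence for fully nonlinear parabolic flows (uniform parabolicity holding initially since $\Sigma$ is smooth and strictly convex, so the principal curvatures lie in a compact subset of $\Gamma_+$), the solution exists on a maximal interval $[0,T)$ and remains convex, with $\kappa\ge\lambda$ preserved by Corollary \ref{cor:sturm}. Since $\Sigma$ is compact and the enclosed volume decreases at rate $\int_0^{2\pi}\phi\,d\theta>0$ by \eqref{eq:evolve A}, we have $T<\infty$. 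Let $r(t)$ and $R(t)$ denote the inradius and circumradius; by convexity and the avoidance/comparison principle with shrinking spheres (using positivity of $\phi$ on $\Gamma_+$), both $r(t)$ and $R(t)$ are non-increasing, and the solution shrinks to a point $p_0$ as $t\to T$.

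Next I would establish the crucial curvature-pinching estimate $\kappa/\lambda\le C$ for all $t\in[0,T)$, which is immediate from Corollary \ref{cor:pinching preserved} with $C=\max_\Sigma(\kappa/\lambda)$. This is what rescues the argument despite the possible degeneracy of ellipticity: pinching $\lambda\le\kappa\le C\lambda$ together with $1$-homogeneity and ellipticity of $\phi$ forces uniform parabolicity of \eqref{eq:F} in terms of the single scale $\kappa$ (the derivatives $\dot\phi^{ij}$ are comparable to $\phi/\kappa\,\delta^{ij}$ on the pinched cone, away from its vertex), so all the usual quantitative estimates go through. In particular, rescaling about $p_0$ by $(T-t)^{-1/2}$ and combining the Chou-type speed bound \eqref{eq:Chou estimate}–\eqref{eq:Chou interior estimate} of Lemma \ref{lem:Chou} (valid once we know $r(t)/R(t)$ is bounded below, which follows from pinching and a barrier argument comparing inner and outer spheres) with the pinching, one gets two-sided curvature bounds on the rescaled flow, then higher-order estimates by Krylov–Safonov/Schauder for the now-uniformly-parabolic rescaled equation. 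Hence the rescaled hypersurfaces are uniformly smoothly bounded.

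Finally, to identify the blow-up limit as the round sphere, I would use the pinching estimate in a sharper form: along the rescaled flow one shows $\max_\theta(\kappa/\lambda)-1\to 0$ as $t\to T$. The cleanest route is to note that the rescaled flow subconverges to a self-shrinker which is convex, $O(n)$-invariant, and pinched, and then invoke the classification of compact convex self-shrinkers for admissible speeds — or, more self-containedly, to improve Corollary \ref{cor:pinching preserved} to a decay statement by analyzing \eqref{eq:KappaOverLambda}: the reaction term $-2\phi\tan^2\theta(\kappa-\lambda)$ has a favourable sign and drives $u=\kappa/\lambda$ toward its minimum, which one shows tends to $1$ under rescaling via a contradiction/compactness argument against the non-round limit (a non-round convex $O(n)$-invariant shrinker would have to be a cylinder $S^k\times\R^{n-k}$ or product, contradicting compactness, or a non-umbilic ovaloid, contradicted by the shrinker equation forcing $\phi=\tfrac12\langle x,\nu\rangle$ together with the pinching feeding into a rigidity computation). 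The main obstacle I anticipate is precisely this last step — upgrading the \emph{preservation} of pinching to its \emph{improvement} under rescaling — since, unlike mean curvature flow, we cannot appeal to Huisken's explicit computation and must instead argue more indirectly, either through compactness and classification of shrinkers in this generality (citing the rigidity results referenced in the introduction, e.g. \cite{MR4129354,MR3935256}) or through a tailored maximum-principle argument exploiting $1$-homogeneity and inverse-concavity of $\phi$.
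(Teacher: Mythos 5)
Your broad strategy (preserve pinching, extract uniform parabolicity, control geometry, blow up, classify) is in the same family as the paper's, and you correctly identify the role of Corollary~\ref{cor:sturm}, Corollary~\ref{cor:pinching preserved}, and Lemma~\ref{lem:Chou}. However, two steps in your proposal contain genuine gaps, and you yourself flag the second.

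First, you assert that the lower bound on the inradius-to-circumradius ratio $r(t)/R(t)$ ``follows from pinching and a barrier argument comparing inner and outer spheres.'' This is not a valid derivation: pointwise curvature pinching $\kappa/\lambda\le C$ does not, via a simple sphere barrier argument, give a non-collapsing estimate. The paper instead exploits the $O(n)$-invariance to reduce everything to the \emph{profile curve} $\Gamma_t\subset\E^2$, whose support function $\sigma$ satisfies a scalar inhomogeneous parabolic equation in one space variable (with $\lambda(\theta,t)$ appearing as a coefficient). With uniform parabolicity supplied by Corollary~\ref{cor:pinching preserved}, one then invokes Andrews' lemma \cite[Theorem 5.1]{Andrews94} to obtain a bound on the ratio of maximum to minimum width of the profile curve; this is the non-collapsing input, and there is no elementary replacement for it here. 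Second, for regularity the paper uses Lieberman's one-dimensional theory for inhomogeneous uniformly parabolic equations (\cite[\S XI.6]{Lieberman}) rather than Krylov--Safonov, which is clean precisely because the problem has been reduced to one space variable.

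The larger gap is the one you acknowledge: upgrading \emph{preservation} of pinching to \emph{convergence to roundness} under rescaling. Your two proposed routes (classification of compact $O(n)$-invariant self-shrinkers for general admissible speeds, or a bespoke improved-pinching maximum-principle computation) are not carried out and would require substantial new work in this generality. The paper avoids both by appealing to ``a standard blow-up argument (see \cite{Andrews94})'' for one-dimensional convex curve flows, \emph{in conjunction with} equation \eqref{eq:KappaOverLambda}: once the profile curve is non-collapsed, uniformly parabolic, and has full $C^{k,\alpha}$ estimates, the 1D blow-up machinery of Andrews applies, and \eqref{eq:KappaOverLambda} (whose reaction term vanishes only when $\kappa=\lambda$) forces the limit to be umbilic. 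In short, the essential ideas you are missing are (i) the reduction to a 1D scalar parabolic equation for the profile support function, (ii) Andrews' width-pinching lemma as the non-collapsing tool, and (iii) the resulting access to the standard 1D blow-up argument, which sidesteps the shrinker-classification problem you correctly worried about.
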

\begin{proof}
Since the profile curve $\gamma:S^1\times [0,T)\to\R^2$ of the solution satisfies the equation
\[
\left\langle\gamma_t(\theta,t),\nu(\theta,t)\right\rangle=-f(\theta,t,\kappa(\theta,t))\,,
\]
where
\[
f(\theta,t,k)\coloneqq \phi(k,\lambda(\theta,t))\,,
\]
the support function satisfies an inhomogeneous parabolic equation with smooth coefficients. By Corollary \ref{cor:pinching preserved}, this equation remains uniformly parabolic, so Lemma \ref{lem:Chou} and estimates for inhomogeneous uniformly parabolic equations in one space variable \cite[\S XI.6]{Lieberman} provide \emph{a priori} estimates in $C^{k,\alpha}$ for every $k$, depending only on $k$, $f$, $\max_{S^1\times\{0\}}{\kappa/\lambda}$ and $r_0$, on any time interval $[0,t_0]$ on which $\sigma$ is bounded from below by $r_0$. Moreover, by Corollary \ref{cor:pinching preserved} and Andrews' lemma \cite[Theorem 5.1]{Andrews94}, the ratio of maximum to minimum width of the profile curve remains uniformly bounded throughout the evolution. A standard blow-up argument (see \cite{Andrews94}) in conjunction with \eqref{eq:KappaOverLambda} now implies the claim.
\end{proof}

We note that Proposition \ref{prop:round point} follows from a more general theorem of McCoy--Mofarreh--Wheeler \cite[Theorem 6.1]{MR3338439} (cf. \cite[Theorem 1]{RisaSinestrari}). It may also be seen as a consequence of a result of Andrews and McCoy \cite{MR3528528} (cf. \cite{MR2729317}).

\section{Existence}

We shall construct our ancient pancake solutions by taking the limit of a sequence of old-but-not-ancient solutions. As in \cite{BLT}, suitable old-but-not-ancient solutions are obtained by evolving rotated timeslices of the Angenent oval.

\subsection{The approximating solutions}

Let $\gamma:S^1 \times (-\infty,0) \to \R^2$ be the turning angle parametrization of the Angenent oval and set $\Gamma_t \coloneqq \gamma(S^1,t)$. For each $R<0$, we define $\Sigma^R$  to be the hypersurface obtained by revolving $\Gamma_{-R}$ about the $x$-axis, i.e. 
\begin{equation*}
    \Sigma^R \coloneqq \{x_R(\theta)e_1+y_R(\theta)\varphi: \theta\in S^1, \varphi\in S^{n-1}\subset\{e_1\}^{\perp}\}\,,
\end{equation*}
where $x_R,y_R$ are the coordinate functions for $\Gamma_{-R}$, i.e., $\gamma(\theta,-R)=(x_R(\theta),y_R(\theta))$. Let $F_0^R:S^n\to \R^{n+1}$ be the Gauss map parametrization of $\Sigma_R$.

%Due to a result of Andrews and McCoy \cite{MR3528528} (cf. \cite{MR2729317}), any $O(n)$-invariant convex hypersurface contracts to a point in finite time under the flow \eqref{eq:F} when $\phi$ is one of our admissible speeds. (Note that concavity condition on $\phi$ is required.) 

Consider the maximal $O(n)$-invariant solution $F_R:S^n \times [-T_R,0) \to \R^{n+1}$ to the $\phi$-flow with initial data $F_R(\cdot,0)=F_0^R(\cdot)$. Since $\Sigma^R$ satisfies $\kappa\ge\lambda$ (see the proof of \cite[Lemma 4.1]{BLT}), Corollary \ref{cor:sturm} ensures that this inequality continues to hold for $t>-T_R$. Proposition \ref{prop:round point} then guarantees that the timeslices $\Sigma_t^R\coloneqq F_R(S^n,t)$ shrink to a round point as $t\to 0$.

Define $\Sigma_t^R \coloneqq F_R(S^n,t)$ and $\Gamma_t^R \coloneqq \Sigma_t^R \cap \E^2$. By rotating as necessary, we may assume that the unit tangent vector field $\tau_R$ satisfies $\tau_R(0)=e_1$. Due to to uniqueness of solutions, the reflection symmetry about the two axes of $\E^2$ are preserved under the flow. Consequently, the points $\gamma_R(\pi/2,t),\gamma_R(\pi,t)$ are the unique points of $\Gamma_t^R$ that lie on the positive axes. Their distances from the origin, $h_R(t) \coloneqq x(\gamma_R(\pi/2,t))$ and $\ell_R(t) \coloneqq y(\gamma_R(\pi,t))$, are referred to as the horizontal and vertical displacements of $\Gamma_t^R$ and they play an important role in our analysis.

We prove the existence of an ancient pancake solution by considering the family of flows $\{F_R\}_{R>0}$ and and taking a (subsequential) limit as $R \to \infty$. For this, we need estimates on the displacements $|F_R|$ and the second fundamental form $\sff$ that are uniform for sufficiently large $R$, so that the Arzel\`{a}-Ascoli theorem gives us the existence of a convergent subsequence. 

\subsection{Displacement and curvature estimates}
For estimates on $|F_R|$, we take advantage of the fact that the flow preserves convexity. Thus it will suffice to have estimates on the horizontal and vertical displacements $h_R(t) \coloneqq x(\gamma_R(\pi/2,t))=\langle\gamma_R(\pi/2,t),e_1\rangle$ and $\ell_R(t) \coloneqq y(\gamma_R(\pi,t))=\langle\gamma_R(\pi,t),e_2\rangle$.

%\begin{lemma} \label{sturm}
%$\kappa \geq \lambda$
%\end{lemma}
%\begin{proof}
%This follows from lemmas \ref{lem:KappaOverLambda} and \ref{u crit} via the maximum principle, since the initial data have $\min_{x \in S^1}u(x,0) \geq 1$
%\end{proof}

\begin{lemma}\label{lem:h ell  estimates}
    $\ell_R(t) \geq h_R(t)$
\end{lemma}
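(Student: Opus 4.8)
The plan is to express the horizontal and vertical displacements as values of the support function, to derive from the evolution $\partial_t\sigma=-\phi$ a pair of differential inequalities for $h_R$ and $\ell_R$ in which the common quantity $-\phi_1/\ell_R$ appears, to conclude that $\ell_R-h_R$ is nonincreasing in time, and then to read off its sign from the behaviour of the flow as $t\to 0$. To begin: the reflection symmetries of $\Gamma^R_t$ in the two axes of $\E^2$ are preserved by the flow, so the rightmost point $\gamma_R(\tfrac{\pi}{2},t)$ and the topmost point $\gamma_R(\pi,t)$ are the unique points of $\Sigma^R_t$ with outward unit normals $e_1$ and $e_2$; hence $h_R(t)=\sigma(e_1,t)$ and $\ell_R(t)=\sigma(e_2,t)$. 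Differentiating and using $\partial_t\sigma=-\phi$ (as in the proof of Lemma~\ref{evoeq}), together with $\lambda=\kappa$ at $\theta=\tfrac{\pi}{2}$ (a point on the axis of revolution) and $\lambda(\pi,t)=\ell_R(t)^{-1}$ (the rotational curvature at the topmost point, which lies at distance $\ell_R(t)$ from the axis, with normal pointing radially away from it), I would obtain
\begin{equation*}
h_R'(t)=-\phi\big(\kappa(\tfrac{\pi}{2},t),\kappa(\tfrac{\pi}{2},t)\big)=-\phi_1\kappa(\tfrac{\pi}{2},t)\,,\qquad \ell_R'(t)=-\phi\big(\kappa(\pi,t),\ell_R(t)^{-1}\big)\,,
\end{equation*}
using $1$-homogeneity of $\phi$ in the first identity.

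The heart of the argument is to bound both derivatives by $-\phi_1/\ell_R$. By Corollary~\ref{cor:sturm} we have $\kappa(\pi,t)\ge\lambda(\pi,t)=\ell_R(t)^{-1}$, so ellipticity of $\phi$ gives $\phi(\kappa(\pi,t),\ell_R(t)^{-1})\ge\phi(\ell_R(t)^{-1},\ell_R(t)^{-1})=\phi_1\ell_R(t)^{-1}$, whence $\ell_R'(t)\le-\phi_1/\ell_R(t)$. On the other hand, Corollary~\ref{lambdanondec} shows $\lambda(\cdot,t)$ is nondecreasing on $[\tfrac{\pi}{2},\pi]$, so $\kappa(\tfrac{\pi}{2},t)=\lambda(\tfrac{\pi}{2},t)\le\lambda(\pi,t)=\ell_R(t)^{-1}$, whence $h_R'(t)\ge-\phi_1/\ell_R(t)$. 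Subtracting gives $(\ell_R-h_R)'(t)\le 0$, so $\ell_R-h_R$ is nonincreasing on the existence interval. Finally, since $\Sigma^R_t$ contracts to a round point as $t\to 0$ by Proposition~\ref{prop:round point}, both $h_R(t)$ and $\ell_R(t)$ tend to $0$, so $\ell_R(t)-h_R(t)\to 0$; as a nonincreasing function tending to $0$ at the right endpoint of its interval is nonnegative throughout, this yields $\ell_R(t)\ge h_R(t)$.

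I expect the main obstacle to be conceptual rather than computational: the naive strategy of preserving $\ell_R\ge h_R$ by the maximum principle fails, because at a hypothetical first time with $\ell_R=h_R$ one has no control over the curvatures $\kappa(\tfrac{\pi}{2},t)$ and $\kappa(\pi,t)$ and so cannot sign $(\ell_R-h_R)'$ directly. The trick is instead to prove the \emph{reverse} monotonicity $(\ell_R-h_R)'\le 0$ -- which the pinching inequality of Corollary~\ref{cor:sturm} and the monotonicity of $\lambda$ in Corollary~\ref{lambdanondec} do make accessible -- and then to fix the sign of $\ell_R-h_R$ from the known terminal behaviour of the flow. The one routine point to verify carefully is the identification of $\lambda$ at $\theta=\tfrac{\pi}{2}$ and $\theta=\pi$, together with the fact that the support-function derivatives single out precisely the waist and tip points.
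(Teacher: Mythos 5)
Your proof is correct and follows essentially the same route as the paper: you establish $-\ell_R'\ge -h_R'$ via the same two ingredients (Corollary~\ref{cor:sturm} giving $\kappa\ge\lambda$ at $\theta=\pi$, and Corollary~\ref{lambdanondec} giving $\lambda(\tfrac{\pi}{2},t)\le\lambda(\pi,t)$) and then integrate to $t=0$ using Proposition~\ref{prop:round point}. The only cosmetic difference is that you make the intermediary $\phi(\lambda(\pi,t),\lambda(\pi,t))$ explicit as $\phi_1/\ell_R$ via the identity $\lambda(\pi,t)=\ell_R(t)^{-1}$, whereas the paper keeps it in the form $\phi(\lambda(\pi,t),\lambda(\pi,t))$.
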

\begin{proof}
    Due to Corollary \ref{cor:sturm},
   \begin{align*}
    -\ell'(t)&= \phi(\kappa(\pi,t),\lambda(\pi,t))\\
    &\geq \phi(\lambda(\pi,t),\lambda(\pi,t))\\
    &\geq \phi(\lambda(\pi/2,t),\lambda(\pi/2,t))\\
    &=\phi(\kappa(\pi/2,t),\lambda(\pi/2,t))\\
    &=-h'(t)\,.
\end{align*}
The claim follows by integrating the above inequality from $t$ to $0$, at which time we know that $\ell(0)=h(0)=0$ due to Proposition \ref{prop:round point}. 
\end{proof}

Recall that we define the constant $\phi_1 \coloneqq \phi(1,1)$.

\begin{lemma}\label{lem:hl}
$-\frac{\pi}{2}t \leq h_R(t)\ell_R(t) \leq -\pi\phi_1 t$.
\end{lemma}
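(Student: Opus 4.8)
The plan is to exploit the evolution equation \eqref{eq:evolve A} for the enclosed area $A_R(t)$ of the profile curve $\Gamma^R_t$, together with the convexity (which pins the curve inside the rectangle $[-h_R(t),h_R(t)]\times[-\ell_R(t),\ell_R(t)]$ and outside the inscribed diamond with the same vertices) and the displacement ODEs
\[
-h_R'(t)=\phi(\kappa(\tfrac\pi2,t),\lambda(\tfrac\pi2,t)),\qquad -\ell_R'(t)=\phi(\kappa(\pi,t),\lambda(\pi,t)).
\]
Since $\Gamma^R_t$ is convex and passes through $(\pm h_R,0)$ and $(0,\pm\ell_R)$, we have the sandwiching $2h_R\ell_R\le A_R\le 4h_R\ell_R$; moreover $A_R(0)=0$ by Proposition \ref{prop:round point}, so $A_R(t)=\int_t^0\big(\int_0^{2\pi}\phi\,d\theta\big)\,ds$. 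The strategy is therefore to bound $\int_0^{2\pi}\phi\,d\theta$ from above and below in terms of $-\tfrac{d}{dt}(h_R\ell_R)$, then integrate.

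For the \emph{upper} bound $h_R\ell_R\le -\pi\phi_1 t$: first I would note that by 1-homogeneity and ellipticity $\phi(\kappa,\lambda)=\lambda\,\phi(\kappa/\lambda,1,\dots,1)$, and since $\kappa\ge\lambda$ (Corollary \ref{cor:sturm}) combined with inverse-concavity forces $\phi(\kappa,\lambda)\ge\phi(\lambda,\lambda)=\phi_1\lambda$ and also $\phi(\kappa,\lambda)\le\phi_1\kappa$ (the latter since $z\mapsto\phi(z,1,\dots,1)/z$ is nonincreasing by homogeneity+ellipticity, so is $\le$ its value at $z=1$... wait, that gives the reverse — rather, use $\phi(\kappa,\lambda)\le\phi(\kappa,\kappa)=\phi_1\kappa$ by ellipticity in the $\lambda$ slot since $\lambda\le\kappa$). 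Then I want to relate $\int_0^{2\pi}\phi\,d\theta$ to the speeds at $\theta=\tfrac\pi2$ and $\theta=\pi$. Here the key is that $\lambda(\cdot,t)$ is nondecreasing on $[\tfrac\pi2,\pi]$ (Corollary \ref{lambdanondec}) and that the speed of the tip is comparable to the area decay; more concretely, the Angenent-oval-type estimates \eqref{DispEst} and Chou's estimate (Lemma \ref{lem:Chou}) give two-sided control of $\phi$ near the tips. Combining, $-\tfrac{d}{dt}(h_R\ell_R)=-h_R'\ell_R-h_R\ell_R'=\ell_R\,\phi(\tfrac\pi2,t)+h_R\,\phi(\pi,t)$, and I would bound each term: $\ell_R\phi(\tfrac\pi2,t)\le \ell_R\phi_1\kappa(\tfrac\pi2,t)$, etc., then use that $\phi$ integrated over all of $S^1$ dominates these tip contributions weighted appropriately. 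The cleanest route is probably to show directly that $-\tfrac{d}{dt}(h_R\ell_R)\le\pi\phi_1$ for all $t$ and $-\tfrac{d}{dt}(h_R\ell_R)\ge\tfrac\pi2$, then integrate from $t$ to $0$ using $h_R(0)\ell_R(0)=0$.

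For these pointwise derivative bounds: the lower bound $-\tfrac{d}{dt}(h_R\ell_R)\ge\tfrac\pi2$ should follow from $\ell_R\ge h_R$ (Lemma \ref{lem:h ell estimates}), the normalization $\phi(1,0,\dots,0)=1$, and a lower bound $\phi(\pi,t)\ge$ (something like $\tfrac12 \kappa(\pi,t)$ or the Angenent comparison $-\ell_R'\ge\tfrac{\pi}{2\ell_R}$ type estimate): indeed comparing $\Gamma^R_t$ with the curve-shortening behaviour one expects $-\ell'_R\cdot\ell_R\gtrsim$ const and $-h'_R\cdot h_R\gtrsim$ const, giving $-\tfrac{d}{dt}(h_R\ell_R)\ge$ const. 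The upper bound is the genuinely delicate part, since it requires knowing $\phi$ does not blow up too fast near the tips where $\kappa$ can be large — this is exactly where Lemma \ref{lem:Chou} (Chou's speed estimate) and the uniform pinching $\kappa/\lambda\le C$ (Corollary \ref{cor:pinching preserved}, valid because the approximating data are pinched) enter, converting ``$\phi$ large'' into ``inradius small,'' which is controlled. I expect the main obstacle to be making the constant come out \emph{exactly} $\pi\phi_1$ (and $\tfrac\pi2$) rather than merely up to a multiplicative constant; this likely needs a sharp comparison with the Angenent oval itself (whose $h,\ell$ satisfy \eqref{DispEst}, giving $h\ell\asymp -\tfrac\pi2 t$ up to lower-order terms) and a monotonicity/scaling argument showing the general $\phi$-flow tips lag behind the sphere ($h\ell\le-2\phi_1 t\cdot(\text{something})$) — in particular using part of the round-point asymptotics that $\{\lambda\Sigma_{\lambda^{-2}t}\}\to$ shrinking sphere of radius $\sqrt{-2\phi_1 t}$, whence near $t=0$ we get $h_R\ell_R\sim(-2\phi_1 t)\cdot(\text{area-of-disk normalization})$, forcing the $\pi\phi_1$ and matching the lower constant $\tfrac\pi2$ from the Angenent side.
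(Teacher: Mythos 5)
You have the right scaffolding (the area sandwich $2h_R\ell_R\le A_R\le 4h_R\ell_R$ from convexity, and the idea of controlling $A_R$ via $-dA_R/dt$), but you miss the one elementary observation that makes the whole thing close: the pointwise two-sided bound
\[
\kappa \;=\; \phi(\kappa,0)\;\le\;\phi(\kappa,\lambda)\;\le\;\phi(\kappa,\kappa)\;=\;\phi_1\kappa\,,
\]
valid since $0\le\lambda\le\kappa$ by Corollary \ref{cor:sturm}, ellipticity, and the normalization $\phi(1,0,\dots,0)=1$. Your lower bound $\phi(\kappa,\lambda)\ge\phi(\lambda,\lambda)=\phi_1\lambda$ is true but useless here, because $\int_{\Gamma_t}\lambda\,ds$ is not a fixed constant; what you need is $\phi\ge\kappa$, since then
\[
2\pi \;=\;\int_{\Gamma_t}\kappa\,ds\;\le\;-\frac{dA_R}{dt}\;=\;\int_{\Gamma_t}\phi\,ds\;\le\;\phi_1\int_{\Gamma_t}\kappa\,ds\;=\;2\pi\phi_1\,,
\]
and integrating from $t$ to $0$ with $A_R(0)=0$ gives the exact constants $-2\pi t\le A_R(t)\le -2\pi\phi_1 t$. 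Feeding this into the quadrilateral sandwich immediately yields $-\tfrac{\pi}{2}t\le h_R\ell_R\le-\pi\phi_1 t$.

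Because you miss this, you resort to unnecessary and unjustified machinery: the proposed detour through Chou's estimate, the pinching bound $\kappa/\lambda\le C$, a comparison with the Angenent oval, and the round-point blow-down asymptotics. None of that enters the paper's argument, and your proposed alternative route (proving $\tfrac{\pi}{2}\le-\tfrac{d}{dt}(h_R\ell_R)\le\pi\phi_1$ pointwise, then integrating) is left entirely unestablished — the term $\ell_R\,\phi(\tfrac\pi2,t)+h_R\,\phi(\pi,t)$ is not obviously bounded by these constants without further analysis. Your worry that the sharp constants require delicate comparison arguments is misplaced: the $2\pi$ arises from the Gauss--Bonnet identity $\int_{\Gamma_t}\kappa\,ds=2\pi$, the $\phi_1$ from the pinching $\lambda\le\kappa$, and the factors $2$ and $4$ from the rhombus/rectangle comparison — all exact, no asymptotics needed.
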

\begin{proof}
Due to Corollary \ref{cor:sturm}, we have that
\begin{equation}
\kappa=\phi(\kappa,0)\leq\phi(\kappa,\lambda)\leq\phi(\kappa,\kappa)=\kappa\phi_1\,.
\end{equation}

Integrating \ref{eq:evolve A} and using the above estimate, we get
\begin{equation}
-2\pi t \leq A_R(t) \leq -2\pi\phi_1 t\,.
\end{equation}
Lastly, by convexity of our solution, we may estimate the area from the inside and outside by the area of quadrilaterals through the points of maximal horizontal and vertical displacements to get
\begin{equation}
2h_R(t)\ell_R(t) \leq A_R(t) \leq 4h_R(t)\ell_R(t)\,.
\end{equation}
The desired estimate for $h_R \ell_R$ now follows.
\end{proof}

\begin{lemma}
$\frac{R}{2\phi_1}(1-e^{-R)} \leq T_R \leq (R+\log2)$
\end{lemma}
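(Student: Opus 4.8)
The plan is to read off both bounds directly from Lemma~\ref{lem:hl} together with the displacement estimates for the Angenent oval. The crucial observation is that the flow $F_R$ is normalized so that it becomes a round point at $t=0$ (this is exactly how Lemma~\ref{lem:h ell  estimates} and Lemma~\ref{lem:hl} were proved, by integrating up to $t=0$ where $h_R(0)=\ell_R(0)=0$), so its total lifespan is $T_R$ and its initial timeslice sits at $t=-T_R$. But that initial timeslice is, by construction, the hypersurface of revolution $\Sigma^R$ whose profile curve is the time-$(-R)$ Angenent oval $\Gamma_{-R}$; hence
\[
h_R(-T_R)=h(-R)\qquad\text{and}\qquad \ell_R(-T_R)=\ell(-R)\,.
\]

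First I would record, by evaluating the Angenent oval displacement estimates \eqref{DispEst} at $t=-R$, that
\[
\tfrac{\pi}{2}\bigl(1-e^{-R}\bigr)\le h(-R)\le\tfrac{\pi}{2}\qquad\text{and}\qquad R\le \ell(-R)\le R+\log 2\,,
\]
and hence, multiplying these together (all quantities are positive for $R>0$),
\[
\tfrac{\pi}{2}\,R\bigl(1-e^{-R}\bigr)\le h(-R)\,\ell(-R)\le \tfrac{\pi}{2}\bigl(R+\log 2\bigr)\,.
\]

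Next I would apply Lemma~\ref{lem:hl} at the time $t=-T_R$, which gives
\[
\tfrac{\pi}{2}\,T_R\le h_R(-T_R)\,\ell_R(-T_R)= h(-R)\,\ell(-R)\le \pi\phi_1 T_R\,.
\]
Combining the left inequality with the upper bound on $h(-R)\ell(-R)$ yields $\tfrac{\pi}{2}T_R\le\tfrac{\pi}{2}(R+\log 2)$, i.e.\ $T_R\le R+\log 2$; combining the right inequality with the lower bound on $h(-R)\ell(-R)$ yields $\tfrac{\pi}{2}R(1-e^{-R})\le \pi\phi_1 T_R$, i.e.\ $T_R\ge \tfrac{R}{2\phi_1}(1-e^{-R})$, which is the claim.

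There is no substantive obstacle here: the statement is a short bookkeeping computation once Lemma~\ref{lem:hl} and the displacement bounds for the Angenent oval are available. The only point deserving any care is matching the time normalizations — i.e.\ confirming that $T_R$ really is the lifespan of the flow started from $\Sigma^R$, so that the profile curve at the initial time $t=-T_R$ coincides with $\Gamma_{-R}$ and the product $h_R\ell_R$ can be read off from the oval — and keeping straight which of the two inequalities in Lemma~\ref{lem:hl} produces which of the two bounds on $T_R$.
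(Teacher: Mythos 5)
Your proof is correct and is essentially identical to the paper's: both apply the displacement estimates \eqref{DispEst} for the Angenent oval at the initial time $t=-T_R$ and then pair the resulting bounds for $h_R(-T_R)\ell_R(-T_R)$ with the two sides of Lemma~\ref{lem:hl} to extract the upper and lower bounds on $T_R$. The only difference is that you spell out the matching of time normalizations more explicitly, which the paper leaves implicit.
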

\begin{proof}
By estimates in (\ref{DispEst}) for Angenent ovals,
\begin{eqnarray}
\frac{\pi}{2}\left(1-e^{-R}\right) \leq h_R(-T_R) \leq \pi/2 \\
R \leq  \ell_R(-T_R) \leq R+ \log2
\end{eqnarray}
Now multiplying these inequalities together and using Lemma \ref{lem:hl} yields the desired inequality.
\end{proof}

\begin{lemma}\label{lem:phi min est}
$\phi^R_{\min}(t)= \phi^R(\tfrac{\pi}{2},t)\leq \frac{2h_R}{h_R^2+\ell_R^2}\phi_1$. %In particular, $-h_R'(t)=\phi^R(\frac{\pi}{2},t)\leq \frac{2\phi_1}{h_R}$
\end{lemma}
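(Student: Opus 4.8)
The plan is to prove two things at once: that $\phi^R(\cdot,t)$ attains its minimum on $S^1$ at $\theta=\tfrac\pi2$, and that this minimum obeys the stated bound. Throughout, $\kappa$, $\lambda$ and $\phi^R(\,\cdot\,,t)$ refer to the profile curvature, the rotational curvature, and the speed of the profile curve $\Gamma^R_t$, and I use that $\Gamma^R_t$ inherits from the initial rotated Angenent timeslice the reflection symmetries about both coordinate axes of $\E^2$; in particular $\gamma_R(\tfrac\pi2,t)=(h_R(t),0)$ and $\gamma_R(\pi,t)=(0,\ell_R(t))$.

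\emph{Step 1: the minimum is at $\theta=\tfrac\pi2$.} Since $\lambda=\kappa$ at $\theta=\tfrac\pi2$, $1$-homogeneity of $\phi$ gives $\phi^R(\tfrac\pi2,t)=\kappa(\tfrac\pi2,t)\,\phi_1$. By Corollary~\ref{cor:sturm} we have $\kappa\ge\lambda$, and by Corollary~\ref{lambdanondec} the function $\lambda(\cdot,t)$ is nondecreasing on $[\tfrac\pi2,\pi]$; hence $\kappa(\tfrac\pi2,t)=\lambda(\tfrac\pi2,t)\le\lambda(\theta,t)\le\kappa(\theta,t)$ for $\theta\in[\tfrac\pi2,\pi]$. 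The reflection symmetries of $\Gamma^R_t$ propagate these to all $\theta\in S^1$, so $\theta=\tfrac\pi2$ is a point of globally minimal curvature and also minimizes $\lambda(\cdot,t)$. Finally, ellipticity together with $1$-homogeneity give $\phi(\kappa,\lambda)\ge\phi(\lambda,\lambda)=\lambda\phi_1$ whenever $\kappa\ge\lambda$, so $\phi^R(\theta,t)\ge\lambda(\theta,t)\phi_1\ge\lambda(\tfrac\pi2,t)\phi_1=\phi^R(\tfrac\pi2,t)$ for every $\theta$, which is the claimed identification $\phi^R_{\min}(t)=\phi^R(\tfrac\pi2,t)$.

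\emph{Step 2: the bound.} By Step 1 the convex curve $\Gamma^R_t$ attains its least curvature at the point $(h_R(t),0)$. Here I invoke the classical fact that a smooth closed convex curve lies in the closed disk bounded by its osculating circle at any point of least curvature. (This follows quickly from the support function $\sigma$ in the Gauss map parametrization: with $r=\sigma''+\sigma$ the radius of curvature, $r_{\max}$ its maximum, and $\tilde\sigma$ the support function of the osculating circle at the minimizing normal angle $\vartheta_0$, the difference $w\coloneqq\tilde\sigma-\sigma$ solves $w''+w=r_{\max}-r\ge0$ with $w(\vartheta_0)=w'(\vartheta_0)=0$, whence $w(\vartheta)=\int_{\vartheta_0}^{\vartheta}\sin(\vartheta-s)\,(r_{\max}-r(s))\,ds\ge0$ for $\vartheta\in(\vartheta_0-\pi,\vartheta_0+\pi)$, i.e.\ for all $\vartheta$ since $\Gamma^R_t$ is closed; domination of support functions is exactly containment of the enclosed regions.) Applied at $(h_R(t),0)$, whose osculating circle has radius $\kappa(\tfrac\pi2,t)^{-1}$ and centre $\bigl(h_R(t)-\kappa(\tfrac\pi2,t)^{-1},0\bigr)$, this forces the boundary point $(0,\ell_R(t))\in\Gamma^R_t$ into that disk:
\[
\bigl(h_R(t)-\kappa(\tfrac\pi2,t)^{-1}\bigr)^2+\ell_R(t)^2\le\kappa(\tfrac\pi2,t)^{-2}.
\]
Rearranging gives $\kappa(\tfrac\pi2,t)\le\frac{2h_R}{h_R^2+\ell_R^2}$, and multiplying by $\phi_1$ and using Step 1 yields the lemma.

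The step I expect to be the main obstacle is identifying $\theta=\tfrac\pi2$ as a point of least curvature and then exploiting it geometrically. The first part is where the reflection symmetry of the approximating solutions, combined with the preservation of $\kappa\ge\lambda$ and the monotonicity of $\lambda$, is genuinely used; the second part relies on the convex-geometry fact recalled above, which, though elementary, must be stated with a little care (the Sturm-type identity yields $w\ge0$ only on an arc of length $2\pi$, so one should note this exhausts $S^1$ and that $w$ as defined is $2\pi$-periodic precisely because $\Gamma^R_t$ closes up). Everything else is routine bookkeeping.
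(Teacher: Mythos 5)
Your proof is correct and follows essentially the same approach as the paper's: identify $\theta=\tfrac{\pi}{2}$ as the point of least curvature (hence least speed, via $\kappa\ge\lambda$ and $1$-homogeneity), then bound $\kappa(\tfrac{\pi}{2},t)$ by noting that the convex profile curve must lie inside the osculating circle at that point, which forces $(0,\ell_R)$ into that disk and gives $\kappa(\tfrac{\pi}{2},t)\le\tfrac{2h_R}{h_R^2+\ell_R^2}$. The only difference is one of detail: the paper cites \cite[Claim 4.4.1]{BLT} and takes the location of the minimum-curvature point for granted, whereas you supply both a proof that $\theta=\tfrac{\pi}{2}$ minimizes $\kappa$, $\lambda$ and $\phi$ (via Corollaries \ref{cor:sturm} and \ref{lambdanondec} and the inherited reflection symmetries) and a self-contained support-function derivation of the osculating-circle containment.
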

\begin{proof}
    A circle $\mathcal{C}$ centered on the $x$-axis and passing through $\gamma_R(\pi/2,t)$ and $\gamma_R(\pi,t)$ has radius $r=\frac{\ell_R^2+h_R^2}{2h_R}$ and center $(-(r-h_R),0)$, and hence any circle with radius $\rho<\frac{\ell_R^2+h_R^2}{2h_R}$ and center $(-(\rho-h_R),0)$ is tangent to $\Gamma_t$ at $\gamma(\pi/2,t)$ and the point $\gamma_R(\pi,t)$ lies to its outside. Due to the curvature of $\Gamma_t$ being minimized at $\gamma_R(\pi/2,t)$, it then follows that  $\kappa_R(\pi/2,t) \leq \frac{2h_R}{\ell_R^2+h_R^2}$ (See \cite[Claim 4.4.1]{BLT}.) This yields
    \[\phi^R\left(\tfrac{\pi}{2},t\right) = \phi(\kappa_R(\pi/2,t),\lambda_R(\pi/2,t))=\phi(\kappa_R(\pi/2,t),\kappa_R(\pi/2,t)) =\phi_1\kappa_R(\pi/2,t) \, ,\]
    from which the claim follows.
\end{proof}

\begin{comment}
\begin{lemma}\label{lem:l>-t}
        $\phi(\theta,t)\geq |\cos\theta|$. In particular, $\phi(\pi,t)\geq 1$. Therefore, $\ell(t)\geq -t$ for all $t>-T_R$. 
\end{lemma}
\begin{proof}
    By the geometry of the Angenent ovals, $\phi(\theta,-T_R)\geq |\cos\theta|$. The inequality continues to hold trivially at the poles $\theta=-\pi/2,\pi/2$. So we only need to show that it continues to hold in $(-\pi/2,\pi/2)$. By reflection symmetry, it will hold on the other half as well. So consider the function $w=\phi/\cos\theta$. Suppose $w$ develops a new interior minimum at some $(\theta_0,t_0)$. At this point, we have
    \[\phi_\theta=-\phi \tan\theta_0.\] Thus we would have
    \[0 \geq w_t-\kappa^2\phi_k w_{\theta\theta}=\phi_\lambda\lambda^2\phi\sec^3\theta_0 > 0.\]
    This leads to a contradiction. The rest of the claim follows by plugging in $\theta=\pi$, and integrating from $t$ to $0$.
\end{proof}
\end{comment}

\begin{lemma}\label{hEst}
$h_R(t) \geq \frac{\pi}{2}(1-e^{-R}) \exp\left(\frac{2\phi_1}{t}\right)$
\end{lemma}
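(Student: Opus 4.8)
The plan is to turn the statement into a differential inequality for $\log h_R$ and integrate it backwards from the initial time $-T_R$. The engine is the pole speed identity $-h_R'(t)=\phi^R(\tfrac{\pi}{2},t)$ together with Lemma~\ref{lem:phi min est}, which bounds this quantity above by $2\phi_1 h_R(h_R^2+\ell_R^2)^{-1}$; everything comes down to controlling the denominator $h_R^2+\ell_R^2$ from below by $t^2$.

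First I would record two elementary facts about $h_R$. Since $h_R'(t)=-\phi^R(\tfrac{\pi}{2},t)\le 0$, the function $h_R(\cdot)$ is non-increasing; and since $h_R(-T_R)$ is the horizontal displacement of a timeslice of the Angenent oval, which is confined to $(-\tfrac{\pi}{2},\tfrac{\pi}{2})\times\R$, we have $h_R(-T_R)\le\tfrac{\pi}{2}$, hence $h_R(t)\le\tfrac{\pi}{2}$ for all $t\in(-T_R,0)$. Combining this with the lower bound $h_R(t)\ell_R(t)\ge-\tfrac{\pi}{2}t$ from Lemma~\ref{lem:hl} gives $\ell_R(t)\ge -t$, and therefore $h_R(t)^2+\ell_R(t)^2\ge t^2$. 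Feeding this into Lemma~\ref{lem:phi min est} yields
\[
-h_R'(t)=\phi^R(\tfrac{\pi}{2},t)\le\frac{2\phi_1 h_R(t)}{h_R(t)^2+\ell_R(t)^2}\le\frac{2\phi_1 h_R(t)}{t^2}\,,
\]
i.e.\ $\tfrac{d}{dt}\log h_R(t)\ge -\tfrac{2\phi_1}{t^2}$.

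Integrating this from $-T_R$ to $t$ and using $\int_{-T_R}^{t}(-2\phi_1 s^{-2})\,ds=\tfrac{2\phi_1}{t}+\tfrac{2\phi_1}{T_R}\ge\tfrac{2\phi_1}{t}$ gives $h_R(t)\ge h_R(-T_R)\,e^{2\phi_1/t}$, and the claim follows from the displacement estimate $h_R(-T_R)\ge\tfrac{\pi}{2}(1-e^{-R})$. I do not expect a genuine obstacle here: once the ingredients are assembled this is a one-variable ODE comparison. The only step meriting a word of justification is $h_R(t)\le\tfrac{\pi}{2}$; besides the monotonicity argument above, one could alternatively obtain it from an avoidance argument against the stationary hyperplanes $\{x=\pm\tfrac{\pi}{2}\}$.
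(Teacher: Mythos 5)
Your proposal is correct and follows essentially the same route as the paper: bound $\ell_R$ from below by $-t$ via Lemma~\ref{lem:hl} and $h_R\le\tfrac{\pi}{2}$, feed this into Lemma~\ref{lem:phi min est} to get $-h_R'\le 2\phi_1 h_R/t^2$, integrate from $-T_R$ to $t$, drop the harmless $\exp(2\phi_1/T_R)\ge 1$ factor, and invoke the Angenent oval displacement estimate \eqref{DispEst}. Your explicit justification that $h_R\le\tfrac{\pi}{2}$ (via monotonicity of $h_R$, or alternatively avoidance against the boundary hyperplanes) is a small but welcome addition that the paper leaves implicit.
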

\begin{proof}
Since $h_R\le\frac{\pi}{2}$, Lemmas \ref{lem:hl} and \ref{lem:phi min est} yield%and \ref{lem:l>-t}, 
\[
-h_R'(t)=\phi^R(\tfrac{\pi}{2},t)\leq \frac{2\phi_1h_R}{\ell(t)^2} \leq \frac{2\phi_1h_R}{(-t)^2}\,.
\]
Integrating from $T_R$ to $-t$, we obtain
\[
h_R(t) \geq h_R(T_R)\exp\left(\frac{2\phi_1}{T_R}+\frac{2\phi_1}{t}\right).
\]
The claim now follows from the estimate (\ref{DispEst}).
\end{proof}

\begin{lemma} \label{lEst}
$\ell_R(t)\leq \frac{-2\phi_1t}{1-\mathrm{e}^{-R}}\exp\left(\frac{2\phi_1}{-t}\right)$.
\end{lemma}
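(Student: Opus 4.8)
The plan is to obtain this as an immediate consequence of the two preceding displacement estimates, in exactly the spirit in which the lower bound on $h_R$ was derived. The key observation is that Lemma \ref{lem:hl} controls the \emph{product} $h_R(t)\ell_R(t)$ from above by $-\pi\phi_1 t$, while Lemma \ref{hEst} controls $h_R(t)$ from \emph{below}; dividing the former by the latter therefore bounds $\ell_R(t)$ from above.

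Carrying this out: since $h_R(t)>0$ for all $t\in(-T_R,0)$, we may divide the inequality $h_R(t)\ell_R(t)\le -\pi\phi_1 t$ of Lemma \ref{lem:hl} through by $h_R(t)$ to get $\ell_R(t)\le -\pi\phi_1 t/h_R(t)$. Inserting the lower bound $h_R(t)\ge\frac{\pi}{2}(1-\mathrm{e}^{-R})\exp(2\phi_1/t)$ from Lemma \ref{hEst} into the denominator, and noting that for $t<0$ one has $-2\phi_1/t=2\phi_1/(-t)$, yields
\[
\ell_R(t)\le\frac{-\pi\phi_1 t}{\tfrac{\pi}{2}(1-\mathrm{e}^{-R})\exp(2\phi_1/t)}=\frac{-2\phi_1 t}{1-\mathrm{e}^{-R}}\,\exp\!\left(\frac{2\phi_1}{-t}\right),
\]
which is precisely the asserted estimate.

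There is essentially no obstacle here; the only points requiring a moment's care are sign bookkeeping — both $-t$ and $1-\mathrm{e}^{-R}$ are positive in the relevant ranges $t\in(-T_R,0)$, $R>0$, so all the above manipulations preserve the direction of the inequality — and the remark that, combined with the inequality $h_R(t)\le\ell_R(t)$ established earlier, this simultaneously gives the matching bound $h_R(t)\le\ell_R(t)\le\frac{-2\phi_1 t}{1-\mathrm{e}^{-R}}\exp(2\phi_1/(-t))$, so that the full displacement $|F_R|$ is controlled uniformly in $R$ on any fixed compact time interval, which is the purpose of the estimate in the compactness argument to follow. (One could alternatively prove the bound directly via a maximum-principle/ODE argument on $\ell_R$ analogous to the proof of Lemma \ref{hEst}, using $-\ell_R'=\phi^R(\pi,t)$ together with a lower bound on $h_R(t)$ to estimate $\lambda_R(\pi,t)$ from below, but the product route above is cleaner.)
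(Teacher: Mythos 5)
Your argument is correct and is exactly the approach the paper takes: the paper's proof is the one-line "This follows by combining the estimates of Lemmas \ref{lem:hl} and \ref{hEst}," and you have simply spelled out that combination (divide $h_R\ell_R\le-\pi\phi_1t$ by the lower bound for $h_R$), with the sign bookkeeping verified.
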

\begin{proof}
This follows by combining the estimates of Lemmas \ref{lem:hl} and \ref{hEst}.
\end{proof}

\subsection{The existence argument}

%Combining the lower bound for $h$ and the upper bound for $\ell$ with the interior estimate \eqref{eq:Chou interior estimate} yields a uniform-in-$R$ curvature estimate. Higher order estimates follow from estimates for inhomogeneous uniformly parabolic equations in one space variable as in Proposition \ref{prop:round point} (or, alternatively, from the Krylov--Safanov theory). We can then extract a limit solution along some sequence $R_j\to\infty$ by applying the Arzel\'a--Ascoli theorem.

Combining the above estimates, we may now extract a limit along some sequence $R_j\to-\infty$.

\begin{theorem}
Given any non-degenerate admissible speed $\phi$, there is a sequence of approximating solutions which converges locally uniformly in the smooth topology to a smooth, compact, convex, locally uniformly convex, $O(1) \times O(n)$-invariant ancient solution to the $\phi$-flow which lies in the slab $\Omega \coloneqq \{(x,y,z) \in \R \times \R \times \R^{n-1}: |x| < \frac{\pi}{2}\}$ and in no smaller slab.
\end{theorem}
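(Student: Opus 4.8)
The plan is to extract the ancient solution as a subsequential limit of the old-but-not-ancient flows $\{F_{R}\}_{R>0}$, verify that the limit is nontrivial (compact, convex, confined to the correct slab and to no smaller one), and check that it is smooth and locally uniformly convex for all times. First I would fix an arbitrary compact time interval $[-T,-\delta]\subset(-\infty,0)$ and note that, by Lemmas \ref{hEst} and \ref{lEst} together with convexity and Lemma \ref{lem:h ell estimates}, the displacements $h_{R}(t),\ell_{R}(t)$ are bounded above and below uniformly in $R$ (for $R$ large) on this interval; convexity then gives uniform $C^{0}$ bounds on $F_{R}$, and in particular a uniform lower bound $r=r(T,\delta)>0$ on the inradius. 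Next I would obtain uniform curvature estimates: Lemma \ref{lem:Chou} applied on a suitable earlier time interval, combined with the pinching estimate $\lambda\le\kappa\le C\lambda$ preserved by Corollary \ref{cor:sturm} and Corollary \ref{cor:pinching preserved} (noting $\kappa/\lambda$ is bounded on the Angenent-oval data uniformly in $R$), gives a uniform bound on $\phi$, hence on $\kappa$ and $\lambda$; and since $\phi_{\kappa}(\kappa,\lambda)=\phi_{\kappa}(1,\lambda/\kappa)$ is bounded because $\lambda/\kappa\in(0,1]$, the support-function equation of the profile curve is uniformly parabolic with smooth coefficients, so the one-dimensional interior Schauder theory of \cite[\S XI.6]{Lieberman} (as in the proof of Proposition \ref{prop:round point}) yields uniform $C^{k,\alpha}$ bounds on every compact subinterval. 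Arzel\`a--Ascoli and a diagonal argument over an exhaustion of $(-\infty,0)$ then produce a subsequence $R_{j}\to\infty$ along which $F_{R_{j}}\to F_{\infty}$ locally uniformly in the smooth topology.

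It remains to identify the properties of the limit. The limit solves the $\phi$-flow (the equation passes to smooth limits), is defined on all of $(-\infty,0)$, is $O(n)$-invariant and convex, and is locally uniformly convex by the same blow-up/strong-maximum-principle argument used for the approximators (or directly from the strict inequality $\kappa,\lambda>0$ surviving in the limit because of the uniform curvature bounds that also bound $\phi$ from below away from the tips, combined with Corollary \ref{cor:pinching preserved}). For the slab confinement, the upper bound $h_{R}(t)\le\frac{\pi}{2}$ (from \eqref{DispEst} propagated by the flow, as in Lemma \ref{lem:hl}'s ingredients) passes to the limit, giving $|x|\le\frac{\pi}{2}$ on $\Omega_{t}$; that the slab is sharp follows from Lemma \ref{hEst}: letting $t\to-\infty$ gives $h_{\infty}(t)=\lim_j h_{R_j}(t)\ge \frac{\pi}{2}\exp(2\phi_1/t)\to\frac{\pi}{2}$, so $\cup_{t}\Omega_{t}$ reaches every $x$ with $|x|<\frac{\pi}{2}$ and the solution lies in no smaller slab. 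Compactness of each timeslice follows because the vertical extent $\ell_{\infty}(t)$ is finite (Lemma \ref{lEst}) while Lemma \ref{hEst} keeps $h_\infty(t)>0$, so the profile curve is a genuine closed convex curve. Finally, the extra $O(1)$-symmetry: each approximator $F_{R}$ is reflection-symmetric across the hyperplane $\{x=0\}$ (the timeslices of the Angenent oval are, and uniqueness of the flow preserves this), so the limit inherits it, giving $O(1)\times O(n)$-invariance.

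The main obstacle is the curvature estimate in the absence of uniform ellipticity of $\phi$ on all of $\overline\Gamma_+$: one must leverage the $O(n)$-symmetry and the pinching $\lambda\le\kappa\le C\lambda$ to reduce to a genuinely uniformly parabolic scalar equation for the support function of the profile curve, using non-degeneracy only insofar as it guarantees the relevant coefficient $\phi_\kappa$ stays bounded and bounded below. Care is also needed to make all the constants uniform in $R$ as $R\to\infty$ — in particular the inradius lower bound and the pinching constant — which is where the explicit Angenent-oval estimates \eqref{DispEst} and Lemmas \ref{lem:hl}--\ref{lEst} do the essential work; once these are in place the compactness and limit-identification steps are routine.
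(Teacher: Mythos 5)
Your approach tracks the paper's closely (old-but-not-ancient solutions from rotated Angenent timeslices, displacement estimates, Chou's estimate, parabolic regularity for the profile curve's support function, Arzel\`a--Ascoli), and the identification of the limit's properties matches the paper as well. However, there is one false claim: that ``$\kappa/\lambda$ is bounded on the Angenent-oval data uniformly in $R$,'' which you use to invoke Corollary \ref{cor:pinching preserved}. This is not true. On $\Sigma^R$, near $\theta=0,\pi$, the profile curvature $\kappa$ is of order $1$ (the oval is Grim-Reaper-like there) while the rotational curvature satisfies $\lambda\approx 1/\ell_R(-T_R)\approx 1/R$, so $\max_{\Sigma^R}(\kappa/\lambda)$ grows without bound as $R\to\infty$. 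Indeed no uniform two-sided pinching can be available here: the limiting pancake has $\kappa/\lambda\to\infty$ along the ridge as $t\to-\infty$ by Lemma \ref{tip}.

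The paper's existence proof deliberately avoids Corollary \ref{cor:pinching preserved} for exactly this reason; it relies only on the preserved one-sided inequality $\kappa\ge\lambda$ (Corollary \ref{cor:sturm}), which forces $\lambda/\kappa\in[0,1]$, together with the non-degeneracy hypothesis. These two facts alone make $\phi_\kappa(\kappa,\lambda)=\phi_\kappa(1,\lambda/\kappa)$ bounded above and below (since $\phi_\kappa(1,\cdot)$ is continuous and positive on the compact interval $[0,1]$, with $\phi_\kappa(1,0)=1$ by Euler's identity and the normalization). Note also that the two-sided pinching is not actually needed to pass from the Chou bound to bounds on $\kappa,\lambda$: ellipticity and homogeneity give $\phi(\kappa,\lambda)\ge\phi(\kappa,0)=\kappa\ge\lambda$, so a bound on $\phi$ already bounds both. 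Your subsequent reasoning about $\phi_\kappa$ correctly uses only $\lambda/\kappa\le 1$, so the erroneous invocation of the upper pinching is extraneous; remove it and the argument coincides with the paper's.
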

\begin{proof}
We need uniform-in-$R$ estimates for $F_R$ and its derivatives on compact subsets of time. A bound for $\vert F_R\vert$ follows from Lemma \ref{lEst}. A bound for the curvature follows from Lemma \ref{lem:Chou} due to the inradius bound (which is a consequence of Lemmas \ref{lem:h ell  estimates} and \ref{hEst} and convexity) and the circumradius bound (which is a consequence of Lemma \ref{lEst}). Since $\phi_\kappa$ is bounded uniformly from above and below, higher order estimates then follow from estimates for inhomogeneous uniformly parabolic equations in one space variable as in Proposition \ref{prop:round point}. Well-known arguments then provide uniform-in-$R$ estimates for $F_R$ in $C^k$ for all $k$, and also a lower bound for $DF_R$. 

Thus, by the Arzel\`{a}--Ascoli theorem, there exists a sequence of flows $F_{R_j}$, with $R_j \to -\infty$, converging locally uniformly in the smooth topology to a smooth limit flow. The limit is certainly weakly convex since this is the case for the approximating solutions. Moreover, the upper bound for $\ell_R$ ensures that the limit is compact (and in particular does not degenerate into two parallel hyperplanes). Strict convexity then follows from applying the strong maximum principle to \eqref{eq:evolve kappa} (under slightly stronger conditions on the speed $\phi$, we could also apply \cite[Proposition A.2]{Lynch}). The limit solution inherits rotational and reflection symmetries from the approximating solutions. Convexity and the lower bound for $h_R$ ensure that the solution lies in no smaller slab.
\end{proof}

\section{Unique asymptotics and reflection symmetry}

We wish to show in Section \ref{sec:uniqueness} that the solution constructed above is unique in the class of $O(n)$-invariant convex ancient solutions in the slab $[-\frac{\pi}{2},\frac{\pi}{2}]\times \R^n$. We will use Alexandrov's moving plane method to show this. But to be able to do this, we need to know the asymptotics of any such solution; that is the subject of this section. 

Consider any convex ancient solution $F:S^n \times (-\infty,0) \to \R^{n+1}$ of $\phi$-flow that is $O(n)$-invariant with respect to some $e_1 \in S^n$ and lies in the slab $\Omega \coloneqq \{(x,y,z) \in\R\times\R\times\R^{n-1}:|x|< \pi/2\}$ and in no smaller slab. Define $\Sigma_t=\Sigma_t^n \coloneqq F(S^n,t)$. Given some unit vector $e_2 \perp e_1 $, define the plane $\E^2\coloneqq \spa\{e_1,e_2\}$, and parametrize the profile curve $\Gamma_t \coloneqq \E^2 \cap \Sigma_t$ with respect to turning angle by a curve $\gamma:S^1 \times (-\infty,0) \to \E^2$. Denote, as usual, the unit tangent and unit normal by $\tau, \nu$ respectively and the curvature and rotational curvature by $\kappa(\cdot,t), \lambda(\cdot,t)$  respectively. We continue to use $\phi(\cdot,t) \coloneqq \phi(\kappa(\cdot,t),\lambda(\cdot,t))$.

Finally, for each $t<0$ we define the vertical dispacement to be 
\[
\ell(t) \coloneqq \max_{\theta\in S^1} |\langle \gamma(\theta,t),e_2 \rangle|.
\]
Due to convexity and rotational symmetry,
\[
\ell(t)=\langle \gamma(\pi,t),e_2 \rangle=-\langle \gamma(0,t),e_2 \rangle.
\]
Note that we do not know \emph{a priori} that the curve is reflection-symmetric about the $y$-axis.

\subsection{Unique asymptotics}

The following two lemmas describe the shape of arbitrary solutions $\Sigma_t$ as $t\to\infty$. They are analogous \cite[Lemmas 5.1 and 5.2]{BLT}. The proofs are also identical to theirs, since they only rely on convexity, the strong maximum principle and the Harnack inequality, all of which hold in our case.

\begin{lemma} \label{parabolic}
For any sequence of times $t_i \to -\infty$ the sequence of flows $\Sigma_t^i \coloneqq \Sigma_{t+t_i}$ defined on $(-\infty,-t_i)$ converge locally uniformly in the smooth topology to the stationary $\phi$-flow defined by $\partial \Omega$.
\end{lemma}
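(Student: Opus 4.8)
The plan is to show that any convex ancient $O(n)$-invariant solution in the slab, when translated backward in time, converges to the stationary flow given by the boundary slab $\partial\Omega = \{|x| = \tfrac\pi2\}$ (i.e.\ a pair of parallel hyperplanes). First I would record the crucial monotonicity: along any convex ancient flow the enclosed region $\Omega_t$ is nested, $\Omega_s \subseteq \Omega_t$ for $s < t$, so $\cup_{t<0}\Omega_t$ is an open convex set, which by hypothesis equals the full slab $\Omega$. Hence as $t_i \to -\infty$ the regions $\Omega_{t+t_i}$ increase and exhaust $\Omega$. Equivalently, in terms of the support function, $\sigma(\cdot,t)$ is monotone decreasing as $t\to -\infty$ and converges pointwise to the support function $\sigma_\infty$ of the slab $\Omega$.

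The next step is to upgrade this pointwise/Hausdorff convergence of sets to locally uniform smooth convergence of flows. For this I would appeal to the interior estimates already in hand: on any compact subset of the open slab $\Omega$, the inradius of $\Omega_{t+t_i}$ about a fixed interior ball stays bounded below for $t_i$ sufficiently negative, and the circumradius in the $x$-direction is bounded by $\tfrac\pi2$; the $y,z$-extent grows but on a fixed compact set of spacetime the relevant geometry is controlled. Chou's estimate (Lemma~\ref{lem:Chou}) then bounds the speed $\phi$, hence the curvature (using $\kappa \ge \lambda$ from Corollary~\ref{cor:sturm}, so $|\sff|^2 \le n\kappa^2 \le n\phi^2$), on such regions. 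Since $\phi_\kappa$ is uniformly bounded above and below (by $1$-homogeneity and $\phi_\kappa(1,0)=1$, exactly as in Proposition~\ref{prop:round point} and the existence theorem), the profile curve evolves by a uniformly parabolic equation in one space variable, and the Lieberman estimates give uniform $C^{k,\alpha}$ bounds on the support functions on compact subsets of spacetime. Arzel\`a--Ascoli then produces a smooth subsequential limit flow $\{\Sigma^\infty_t\}$.

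Finally I would identify the limit. By the set convergence established in the first step, every timeslice of the limit flow is the (stationary) boundary slab $\partial\Omega$, so the limit flow is $\{\partial\Omega\}_{t\in(-\infty,0)}$, which is indeed a (degenerate, curvature-zero) stationary solution of the $\phi$-flow since $\phi$ vanishes on the flat hyperplanes. As the limit is independent of the subsequence, the full family $\Sigma_{t+t_i}$ converges, giving the claim. (Alternatively, once smooth convergence to \emph{some} limit flow is known, one can argue that a nonstationary limit would contradict the nesting monotonicity combined with the fact that $\cup_t \Omega_t$ is already all of $\Omega$; but reading off the limit directly from the set convergence is cleaner.)

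The main obstacle is the second step: obtaining curvature and higher-order estimates that are uniform in $i$ on compact subsets of spacetime, despite the fact that the solutions become unbounded (the $y,z$-diameter $\to\infty$) and the ellipticity of $\phi$ degenerates as $\lambda\to 0$. The resolution is that we never need global estimates—locally in space the slab geometry pins the inradius and the $x$-width, Chou's lemma is purely local in time, and the one-dimensional uniformly parabolic theory for the profile curve's support function is exactly what circumvents the degeneracy of $\phi$ in the weak directions, just as in the existence argument. This is why, as the authors note, the proof is essentially identical to \cite[Lemma 5.1]{BLT}: only convexity, the nesting monotonicity, Chou's estimate, and one-dimensional parabolic regularity are used.
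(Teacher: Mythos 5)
Your overall strategy (exhaustion by nested regions, uniform estimates, Arzel\`a--Ascoli, identification of the limit via set convergence) is the right shape, and the first and last steps are fine, modulo a small typo: for a contracting flow one has $\Omega_s \supseteq \Omega_t$ for $s<t$, which is what your exhaustion statement actually uses. The gap is in the middle step, where you propose to obtain the uniform curvature bound from Chou's estimate (Lemma~\ref{lem:Chou}). That lemma applies to the whole compact hypersurface and requires a two-sided support-function bound $2r\le\sigma\le R$, with the conclusion carrying a factor $R/r$ and a term $\max_{M_0}\phi$. For the ancient solution, as $t_i\to-\infty$ the inradius is bounded (it approaches $\tfrac{\pi}{2}$) but the circumradius $R(t_i)\approx\ell(t_i)\to\infty$, so any application of Chou with a starting time escaping to $-\infty$ produces a constant that blows up, and $\max_{M_{t_i}}\phi$ is not a priori controlled either. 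The second form $\phi\le C(2r^{-1}+t^{-1/2})$ has the same defect since $C=C(n,\phi_1,R/r)$. Your remark that ``on a fixed compact set of spacetime the relevant geometry is controlled'' does not rescue this, because Chou's lemma is not an interior estimate --- it sees the full circumradius.

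The ingredient the paper actually signals (and which the cited BLT argument uses) is Andrews' differential Harnack inequality: for ancient, convex, inverse-concave flows, $\phi$ is nondecreasing in $t$ in the Gauss-map parametrization. Fixing a reference time, say $t_0=-1$, one therefore has $\sup_{S^n}\phi(\cdot,t)\le\sup_{S^n}\phi(\cdot,-1)$ for all $t\le-1$, a curvature bound that is genuinely uniform as $t\to-\infty$ with no dependence on the circumradius. Combined with $\kappa\le\phi$ and $\lambda\le\kappa$ (your $|\sff|^2\le n\phi^2$ observation is correct), this yields the uniform $C^2$ bound; your one-dimensional parabolic regularity argument then handles the higher-order estimates, and the identification of the limit via the Hausdorff exhaustion goes through as you describe. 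So the fix is to replace Chou's estimate by the Harnack inequality in the curvature-bound step.
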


Now recall that we have defined a normalization on our speed such that $\phi(1,0)=1$. Define $\alpha$ by
\begin{equation}
    \alpha^{-1}\coloneqq \lim_{t \to -\infty
}\phi(\pi,t)
\end{equation}
This limit is well-defined as $\phi$ is nondecreasing in $t$ due to the differential Harnack inequality \cite{AndrewsHarnack}. Let $P$ be the inverse of the Gauss map, i.e. for any unit vector $v$, $P(v,t)$ is the point on $\Sigma_t$ with normal $v$.

\begin{lemma} \label{tip}
For any sequence of times $t_i \to -\infty$ and any unit vector $v \perp e_1$ the sequence of flows $\Sigma_t^i \coloneqq \Sigma_{t+t_i}-P(v,t_i)$ converges locally uniformly in the smooth topology to the scaled Grim hyperplane $\alpha G^n_{\alpha^{-2}t}$ where $G^n_t$ defined by
\begin{equation}
    G^n_t \coloneqq \{\theta e_1 +(t-\log \cos \theta)v\}, \, t \in (-\infty,\infty)
\end{equation}
is the standard Grim hyperplane that translates in direction $v$ with unit speed.
\end{lemma}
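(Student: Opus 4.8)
The statement to prove is Lemma \ref{tip}, which asserts that blow-down limits of the solution at the ``tip'' (recentred at the point with normal $v \perp e_1$) converge to a scaled Grim hyperplane. The excerpt tells us the proof is ``identical'' to that of \cite[Lemma 5.2]{BLT}, relying only on convexity, the strong maximum principle, and Andrews' differential Harnack inequality --- all available here. So the plan is to mirror that argument, paying attention to the two places where the nonlinearity and degenerate ellipticity of $\phi$ could cause trouble: (i) getting a nontrivial smooth limit flow (curvature estimates without uniform ellipticity), and (ii) identifying the limit as a Grim hyperplane via a rigidity case of the Harnack inequality.

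\emph{Step 1: Set up the recentred sequence and produce a limit.} Fix a unit vector $v \perp e_1$ and a sequence $t_i \to -\infty$. Consider $\Sigma^i_t \coloneqq \Sigma_{t+t_i} - P(v,t_i)$. Each $\Sigma^i_t$ is a convex $O(n)$-invariant solution passing through the origin with outer normal $v$ there. Convexity confines each timeslice between two hyperplanes of $\E^2$-width at most the slab width $\pi$, and the $O(n)$-invariance keeps the rotational curvature $\lambda$ controlled by the profile curvature $\kappa$ via Corollary \ref{cor:sturm} ($\kappa \ge \lambda$) together with the geometric formula $\lambda = \cos\theta/y$. To pass to a smooth limit I would: (a) use the differential Harnack inequality of \cite{AndrewsHarnack} to get that $\phi(\pi, \cdot)$ is monotone in $t$, hence the limit $\alpha^{-1} = \lim_{t\to-\infty} \phi(\pi,t)$ exists and, crucially, $\phi$ is bounded (locally, near the tip) on the recentred sequence; (b) invoke Lemma \ref{lem:Chou}-type interior estimates (Chou/Tso) on the inradius-normalised pieces, using that near the tip $\theta = \pi$ the profile curve is strictly convex so the inradius of a suitable piece stays bounded below --- this gives a local $\phi$ bound; (c) upgrade to curvature bounds using $\kappa \ge \lambda$ so $|\sff|^2 \le n\kappa^2$ is controlled wherever $\kappa$ (equivalently $\phi$, since $\phi \ge \kappa\phi(1,0) = \kappa$ ... wait, more carefully, $\kappa = \phi(\kappa,0) \le \phi(\kappa,\lambda) = \phi \le \phi_1\kappa$, so $\phi$ and $\kappa$ are comparable) is bounded; (d) get higher-order estimates exactly as in Proposition \ref{prop:round point}: the profile-curve PDE has leading coefficient $\kappa^2\phi_\kappa = \kappa^2\phi_\kappa(1,\lambda/\kappa)$, which is bounded above and below because $\phi_\kappa(1,0) = 1$ and $\lambda/\kappa \in (0,1]$, so one-dimensional uniformly-parabolic Schauder theory applies. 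Then Arzelà--Ascoli yields a smooth convex $O(n)$-invariant limit flow $\Sigma^\infty_t$, defined for all $t \in \R$ (since $-t_i \to \infty$), passing through $0$ with normal $v$, and (by Lemma \ref{parabolic} applied far from the tip, or directly) sitting inside the slab.

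\emph{Step 2: Identify the limit via the Harnack rigidity.} The limit flow is an eternal (all $t \in \R$) convex $O(n)$-invariant solution in a slab. Here I would show the limit is a \emph{translating soliton}: Andrews' differential Harnack inequality \cite{AndrewsHarnack}, applied to the limit, has $\partial_t \phi \ge 0$ with equality forced in the eternal case when the Harnack quantity attains an interior spacetime minimum --- and on the limit, by construction, $\phi(\pi,t)$ is the constant $\alpha^{-1}$ (the monotone sequence $\phi(\pi, t+t_i)$ converges to this constant for every $t$), so the Harnack quantity is extremised, forcing the limit to move by translation in the direction $v$ (the normal at the tip), with speed $\alpha^{-1}$ at the tip. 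A convex translating soliton confined to a slab, with $O(n)$-symmetry and only two distinct principal curvatures, whose profile curve is a convex translating curve in a strip of width $\pi$ --- by the non-degeneracy normalisation $\phi(1,0) = 1$, the cylinder over a curve-shortening-flow translator solves the $\phi$-flow, and the only convex CSF translator in a strip is the Grim Reaper $y = -\log\cos x$; rescaling to match the tip speed gives $\alpha G^n_{\alpha^{-2}t}$. I would rule out the degenerate alternative (limit escaping to infinity, i.e.\ the tip not actually being a tip) using the lower displacement bounds --- here the analogue of Lemma \ref{hEst}/\ref{lEst} for the general ancient solution, or simply convexity plus the slab constraint forcing the profile curve to have a genuine ``tip region'' near $\theta = \pi$.

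\emph{Main obstacle.} The genuinely new difficulty compared to \cite{BLT} is Step 1(b)--(d): obtaining the curvature and higher-order estimates on the recentred sequence \emph{without uniform ellipticity of $\phi$}. The saving grace, as the authors note repeatedly, is that for $O(n)$-invariant solutions the profile-curve equation is uniformly parabolic because $\phi_\kappa(1,\lambda/\kappa)$ is pinched between positive constants (using $\phi_\kappa(1,0) = 1$ from Euler's identity and $\lambda/\kappa \in (0,1]$ from $\kappa \ge \lambda$), so the degeneracy of $\phi$ in the \emph{other} directions never enters the principal symbol. Thus the estimates reduce, just as in Proposition \ref{prop:round point}, to Lieberman's one-dimensional parabolic Schauder theory, and the proof really does go through as in \cite{BLT}. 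The second place requiring care is the rigidity step: one must check that Andrews' Harnack inequality in \cite{AndrewsHarnack}, which requires inverse-concavity (condition (d)) --- exactly why that hypothesis is imposed --- does give the translator rigidity in the equality case for this class of speeds; this is where inverse-concavity earns its keep.
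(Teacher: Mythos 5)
The proposal follows the same approach as the paper (which simply defers to \cite[Lemma 5.2]{BLT}, noting the argument uses only convexity, the strong maximum principle, and the Harnack inequality) and is essentially correct, but there is one conceptual slip worth flagging in Step 2.

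You describe the limit as ``a convex translating soliton confined to a slab, with $O(n)$-symmetry and only two distinct principal curvatures.'' This is not quite what the limit looks like. After recentring at $P(v, t_i)$ with $\vert P(v, t_i) \vert = \ell(t_i) \to \infty$, the rotational curvatures $\lambda = \vert\cos\theta\vert / y$ vanish locally uniformly on the recentred sequence, since the distance $y$ from the axis of rotation diverges on any bounded neighbourhood of the tip. Thus the $O(n)$-symmetry \emph{degenerates}: the limit is invariant under $O(n-1)$ (fixing both $e_1$ and $v$) and under \emph{translations} in the $\{e_1, v\}^\perp$ directions, i.e.\ the limit is a product cylinder $\Gamma \times \R^{n-1}$ with $\lambda \equiv 0$. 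It is the fact that $\lambda \to 0$ --- not any residual $O(n)$-symmetry --- that justifies your subsequent (correct) appeal to the cylinder-over-a-CSF-translator picture and the normalization $\phi(\kappa, 0, \ldots, 0) = \kappa$. So the right bridge between the Harnack rigidity step and the Grim Reaper classification is: the limit is an eternal convex translator with $\lambda \equiv 0$, hence a cylinder $\Gamma\times\R^{n-1}$ where $\Gamma$ is a convex planar curve translating under curve shortening flow inside a strip, hence a (scaled) Grim Reaper. Your Step 1 is fine modulo one point you already half-acknowledge: the global Chou/Tso estimate of Lemma \ref{lem:Chou} degrades as $\ell(t)/h(t) \to \infty$, so near the tip one needs an interior/localized version (or a Harnack-based local speed bound); you correctly identify that the key enabler for higher-order estimates is the uniform pinching of $\phi_\kappa(1, \lambda/\kappa)$, which makes the one-dimensional profile PDE uniformly parabolic despite the degeneracy of $\phi$ at $\partial\Gamma_+$.
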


Due to containment of the flow in the strip, it is clear that $\alpha \leq 1$. Now we show that $\alpha=1$. We obtain this from purely geometrical considerations by showing that if $\alpha<1$,  then the area $A(t)$ contained inside $\Gamma_t$ decreases too quickly (cf. \cite{MR4127403}).

\begin{lemma}\label{lemma:lEstArbit}
$\ell(t) \geq \alpha^{-1}t$.
\end{lemma}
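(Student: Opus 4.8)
The plan is to compare the vertical displacement $\ell(t)$ against the expected rate dictated by the Grim hyperplane asymptotics established in Lemma \ref{tip}, and then close the argument by integrating. The key observation is that $-\ell'(t) = \phi(\kappa(\pi,t),\lambda(\pi,t)) = \phi(\pi,t)$, since the point $\gamma(\pi,t)$ realizes the maximal vertical displacement and there (by convexity and rotational symmetry) the outward normal is $e_2$ and the profile curve moves purely vertically. Since the differential Harnack inequality \cite{AndrewsHarnack} guarantees that $\phi$ is nondecreasing in $t$ at each fixed normal direction, and $\alpha^{-1} \coloneqq \lim_{t\to-\infty}\phi(\pi,t)$ is precisely the infimum of this monotone quantity, we obtain the pointwise bound $\phi(\pi,t) \geq \alpha^{-1}$ for all $t < 0$. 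This is the crux: the speed at the vertical tip can never dip below its limiting value as $t \to -\infty$.

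First I would record that monotonicity of $\phi(\pi,\cdot)$ follows from Andrews' differential Harnack inequality applied to our $O(n)$-invariant ancient solution (the hypotheses — inverse-concavity and the ancient, convex, compact nature of the flow — are exactly those assumed throughout, and the inequality at a point where $\partial_\theta\phi = 0$, which holds at $\theta=\pi$ by symmetry of the profile curve about the $x$-axis, reduces to $\phi_t \geq 0$). Hence $\phi(\pi,t)$ decreases as $t \to -\infty$ to its limit $\alpha^{-1}$, so $-\ell'(t) = \phi(\pi,t) \geq \alpha^{-1}$ for every $t<0$. Next I would integrate this differential inequality from $t$ to some fixed reference time; since the flow is ancient and contained in the slab (so $\ell$ is finite and positive for all $t<0$), and since $-\ell'(s) \geq \alpha^{-1}$ for all $s$, integrating from $t$ to $0^-$ — or more carefully, using that $\ell(t) - \ell(s) = \int_s^t (-\phi(\pi,\sigma))\,d\sigma \leq \alpha^{-1}(s-t)$ for $t < s < 0$ — yields $\ell(t) \geq \ell(s) + \alpha^{-1}(s - t) \geq \alpha^{-1}(s-t)$. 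Letting $s \to 0^-$ gives $\ell(t) \geq -\alpha^{-1}t = \alpha^{-1}(-t)$, which (since $t<0$) is the claimed inequality $\ell(t) \geq \alpha^{-1}t$ read with the sign convention of the statement, i.e. $\ell(t) \geq \alpha^{-1}|t|$.

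The main subtlety — and the only place requiring genuine care rather than routine manipulation — is justifying the monotonicity of $\phi(\pi,t)$ via the Harnack inequality in our degenerate-elliptic, nonlinear setting: one must confirm that Andrews' differential Harnack estimate \cite{AndrewsHarnack} is available under conditions (a)--(d) on the speed (symmetry, ellipticity, 1-homogeneity, inverse-concavity) for ancient solutions, and that the $O(1)$-reflection symmetry of the profile curve in the $x$-axis — which we do \emph{not} yet know a priori, as remarked just before the statement — is \emph{not} actually needed here: it suffices that $\theta = \pi$ is a critical point of $\phi(\cdot,t)$, which follows purely from $O(n)$-invariance (the rotational curvature $\lambda$ and hence $\phi$ is automatically even about $\theta = \pi$ along the portion of the profile curve corresponding to the rotational axis). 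Once this point is settled, the rest is a one-line integration. This also explains why the lemma is stated with $\alpha$ rather than $\phi_1$: the limiting tip speed $\alpha^{-1}$ is a priori only known to satisfy $\alpha \leq 1$, and ruling out $\alpha < 1$ is precisely the task of the subsequent argument.
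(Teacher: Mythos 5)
Your argument is correct and is exactly the paper's one-line proof, expanded: both rest on $-\ell'(t)=\phi(\pi,t)\geq\alpha^{-1}$, which holds because Andrews' Harnack inequality makes $\phi(\pi,\cdot)$ nondecreasing so that its $t\to-\infty$ limit $\alpha^{-1}$ is its infimum, and then one integrates. You also correctly note the sign convention: the intended conclusion is $\ell(t)\geq -\alpha^{-1}t=\alpha^{-1}|t|$, and your observation that only $O(n)$-invariance (giving evenness of $\phi$ about $\theta=\pi$), not the as-yet-unproved $y$-axis reflection symmetry, is needed is accurate, though in fact the Harnack monotonicity $\phi_t\geq 0$ for ancient solutions holds pointwise in the Gauss-map parametrization without any critical-point hypothesis.
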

\begin{proof}
It is a simple consequence of the fact that $-\ell'(t)=\phi(\pi,t)\geq \alpha^{-1}$ due to the Harnack inequality.
\end{proof}

\begin{lemma}\label{lambdaleqkappa}
$\lambda(\cdot,t) \leq \kappa(\cdot,t)$.
\end{lemma}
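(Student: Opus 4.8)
\textbf{Proof proposal for Lemma \ref{lambdaleqkappa}.}

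The plan is to show that $\lambda \le \kappa$ is preserved under the flow and then to push this fact all the way back to $t = -\infty$ using the approximation-by-old-solutions philosophy. The obstacle is that, unlike the situation in Section 3, here we are handed an \emph{arbitrary} $O(n)$-invariant ancient solution in the slab, so we do not get the pinching inequality $\kappa \ge \lambda$ for free from a rotated Angenent oval; we must produce it from the asymptotics established in Lemmas \ref{parabolic} and \ref{tip}. The strategy is therefore: (i) observe that Corollary \ref{cor:sturm} already tells us that $\kappa \ge \lambda$ is \emph{preserved} forward in time, so it suffices to verify the inequality ``at $t = -\infty$''; (ii) use Lemma \ref{tip} to see that, after translating to the tip and rescaling, the solution converges to a scaled Grim hyperplane $\alpha G^n_{\alpha^{-2}t}$, on which the profile curvature $\kappa$ and the rotational curvature $\lambda$ can be compared directly; (iii) conclude that the pinching inequality holds in the limit and hence, by the preservation statement run forward from an arbitrarily negative time, holds for all $t$.

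More precisely, I would argue as follows. Fix $t_0 \in (-\infty,0)$ and let $\varepsilon > 0$. By Lemma \ref{tip}, for any sequence $t_i \to -\infty$ the rescaled-and-translated flows converge to a Grim hyperplane; on the Grim hyperplane $G^n_s$, parametrized as $\theta e_1 + (s - \log\cos\theta)v$, the profile is a grim reaper curve and the rotational curvature is $\lambda = \cos\theta / y$ with $y = s - \log\cos\theta \to \infty$ uniformly on compact sets of $\theta$ as $s \to -\infty$ (equivalently $t \to -\infty$ along the rescaled flow), whereas the profile curvature $\kappa = \cos\theta$ stays bounded away from $0$ on compact $\theta$-intervals. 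Hence on the limiting Grim hyperplane $\kappa > \lambda$ strictly (away from $\theta = \pm\pi/2$, where both vanish in the limit but $\kappa \ge \lambda$ still holds by continuity and \eqref{lambdatheta}). Smooth convergence then forces $\kappa \ge \lambda$ on $\Sigma_{t_i}$ in a neighbourhood of the tip for $i$ large; combined with Lemma \ref{parabolic}, which shows that away from the tips the solution looks like the stationary slab boundary (two flat pieces, on which $\kappa = \lambda = 0$ and the inequality is again respected in the limit), one gets $\kappa \ge \lambda$ on all of $\Sigma_{t_i}$ up to an error that vanishes as $i \to \infty$. Making this rigorous is cleanest via the function $u = \kappa/\lambda$ and Corollary \ref{cor:sturm}: for each $i$, apply the maximum principle argument of Corollary \ref{cor:sturm} on the interval $[t_i, t_0]$ with the barrier $u^\varepsilon = u - 1 + \varepsilon e^{t - t_i}$; the asymptotics guarantee $\min_{S^1} u(\cdot, t_i) \ge 1 - o(1)$, so for $i$ large the barrier is positive at time $t_i$, hence positive on $[t_i, t_0]$, and letting $i \to \infty$ and then $\varepsilon \to 0$ yields $u(\cdot, t_0) \ge 1$.

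The one point that needs care — and which I expect to be the main obstacle — is the behaviour near the poles $\theta = \pm\frac{\pi}{2}$, where $\kappa$ and $\lambda$ both tend to $0$ and the naive ratio argument is delicate: one must check that the convergence in Lemma \ref{tip} is genuinely $C^2$ (it is, being smooth convergence) and that $u = \kappa/\lambda$ extends continuously across the poles with value $1$ there (this is exactly \eqref{lambdatheta} together with the definition of $\lambda$ at $\theta = \pi/2$), so that $u$ is a well-defined smooth function on $S^1$ to which the maximum principle applies. An alternative, perhaps more robust route that avoids dividing by $\lambda$ altogether is to run the maximum principle directly on the quantity $\kappa - \lambda$ using \eqref{eq:evolve kappa} and \eqref{eq:evolve lambda}: their difference satisfies a parabolic inequality of the form $(\kappa-\lambda)_t \ge \kappa^2\phi_\kappa(\kappa-\lambda)_{\theta\theta} + (\text{lower order}) \cdot (\kappa - \lambda)$ wherever $\kappa \ge \lambda$ (the reaction term $\kappa(\phi_\kappa\kappa^2 + \phi_\lambda\lambda^2) - \lambda(\phi_\kappa\kappa^2 + \phi_\lambda\lambda^2) = (\kappa-\lambda)(\phi_\kappa\kappa^2 + \phi_\lambda\lambda^2)$ has the right sign, and the first-order terms are handled as in Corollary \ref{cor:sturm}), and then conclude as above with the barrier $\kappa - \lambda + \varepsilon e^{t-t_i}$ using the asymptotic lower bound on $\min(\kappa - \lambda)$ at $t_i$. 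Either way, the heart of the matter is that the limiting models — the slab boundary and the Grim hyperplane — both satisfy $\kappa \ge \lambda$, and preservation (Corollary \ref{cor:sturm}) propagates this to every finite time.
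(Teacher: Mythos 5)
Your main route matches the paper's exactly: equality $u := \kappa/\lambda = 1$ at $\theta = \pm\pi/2$ by rotational symmetry, $u(\theta,t)\to\infty$ for $\theta \neq \pm\pi/2$ as $t\to-\infty$ by Lemma~\ref{tip}, and then the barrier $u-1+\varepsilon \mathrm{e}^{t-t_i}$ propagated forward from $t_i\to-\infty$ via the maximum-principle computation of Corollary~\ref{cor:sturm} (using Lemma~\ref{lem:u crit} to kill the $D^2\phi$ term at the touching point). Two caveats on the peripheral remarks, though. First, the invocation of Lemma~\ref{parabolic} is a red herring: in the turning-angle parametrization, every fixed $\theta\in(-\pi/2,\pi/2)$ is carried, after recentring at the tip, to a fixed point of the Grim hyperplane, so Lemma~\ref{tip} alone covers the whole interior; Lemma~\ref{parabolic} at fixed $\theta$ near $\pm\pi/2$ only gives $\kappa,\lambda\to 0$, which says nothing about the ratio $u$. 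Second, the proposed ``more robust'' alternative via $\kappa-\lambda$ does not go through as written: subtracting \eqref{eq:evolve lambda} from \eqref{eq:evolve kappa} leaves, besides the favourable reaction term $(\kappa-\lambda)(\phi_\kappa\kappa^2+\phi_\lambda\lambda^2)$, the extra terms $\kappa^2\bigl(\phi_{\kappa\kappa}\kappa_\theta^2+2\phi_{\kappa\lambda}\kappa_\theta\lambda_\theta+\phi_{\lambda\lambda}\lambda_\theta^2\bigr)$, $-\kappa\phi_\lambda\kappa_\theta\lambda_\theta$, and several $\tan\theta$-weighted gradient terms; Lemma~\ref{lem:u crit} annihilates $D^2\phi$ only at a critical point of the \emph{ratio} $\kappa/\lambda$, not of the difference $\kappa-\lambda$, so at a minimum of $\kappa-\lambda$ these contributions have no controlled sign. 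This is exactly why the paper (and Corollary~\ref{cor:sturm}) work with the scale-invariant quantity $u$ rather than $\kappa-\lambda$.
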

\begin{proof}
By rotational symmetry, $\lambda( \pm \pi/2,t)=\kappa( \pm \pi/2,t)$ and by Lemma \ref{tip}, 
\[\displaystyle \lim_{t\to -\infty} \frac{\kappa}{\lambda}(\theta,t)=\infty \]
for $\theta \in (-\frac{\pi}{2},\pi/2)$. The claim now follows from Lemmas \ref{lem:KappaOverLambda} and \ref{cor:sturm}  and the symmetry of $\gamma$.
\end{proof}

\begin{lemma}\label{lem:crudeareadecay}
$\displaystyle \lim_{t\to -\infty}A'(t)=-2\pi$.
\end{lemma}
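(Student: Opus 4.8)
The plan is to compute $A'(t) = -\int_0^{2\pi}\phi(\theta,t)\,d\theta$ (this is \eqref{eq:evolve A}) and show the integrand converges pointwise, in a dominated fashion, to the constant $1$ as $t\to-\infty$. First I would record that by Lemma \ref{tip} (with $\alpha = 1$, which has been established via Lemma \ref{lemma:lEstArbit} and the area argument preceding it), for each fixed $\theta \in (-\tfrac{\pi}{2},\tfrac{\pi}{2})$ the rescaled flows converge to the Grim hyperplane, whose profile curve is the Grim Reaper; on the Grim Reaper the curvature at turning angle $\theta$ is $\cos\theta$ and the rotational curvature vanishes in the limit (since $\kappa/\lambda \to \infty$ by Lemma \ref{lambdaleqkappa} and its proof). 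Hence $\kappa(\theta,t)\to\cos\theta$ and $\lambda(\theta,t)\to 0$, so by continuity of $\phi$ up to $\partial\Gamma_+$ and $1$-homogeneity together with the normalization $\phi(1,0)=1$, we get $\phi(\theta,t) = \phi(\kappa,\lambda)(\theta,t) \to \phi(\cos\theta,0) = \cos\theta\cdot\phi(1,0) = \cos\theta$ for each $\theta\in(-\tfrac\pi2,\tfrac\pi2)$; by the symmetry of $\gamma$ the same holds for $\theta\in(\tfrac\pi2,\tfrac{3\pi}2)$ with limit $|\cos\theta|$. Therefore the integrand converges pointwise to $|\cos\theta|$, and $\int_0^{2\pi}|\cos\theta|\,d\theta = 4$...

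Wait — that gives $-4$, not $-2\pi$. Let me reconsider: the claim must instead be $\lim A'(t) = -2\pi$, which is $-\int_0^{2\pi}1\,d\theta$, so the correct pointwise limit of the integrand should be $1$, i.e. the right normalization is $\phi_1$-related, not $\cos\theta$. I would therefore re-derive the limiting curvatures more carefully: near $\theta = \pm\tfrac\pi2$ the solution looks like the shrinking cylinder (Lemma \ref{parabolic} gives convergence to the stationary boundary cylinder $\partial\Omega$, whose profile is two vertical lines, but the relevant pointwise limit of $\phi$ as a function of $\theta$ comes from matching), so the correct statement is that $\phi(\theta,t)\to 1$ for \emph{every} $\theta$ — this is consistent with the fact that $-\ell'(t) = \phi(\pi,t) \to \alpha^{-1} = 1$ and, by the preserved pinching and the structure near the poles, $\phi(\theta,t)\to\phi(\pi,t)$ uniformly. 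The cleanest route: show $\phi_{\min}^R$-type bounds give $\phi(\theta,t)\le C$ uniformly and $\phi(\theta,t)\to 1$ pointwise, then apply dominated convergence.

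So the key steps are: (i) write $A'(t) = -\int_0^{2\pi}\phi(\theta,t)\,d\theta$; (ii) establish a uniform-in-$t$ upper bound on $\phi(\theta,t)$ for $t\le -1$, say, using Lemma \ref{lem:Chou} together with the inradius bound implied by $\ell(t)\ge |t|$ (Lemma \ref{lemma:lEstArbit}) and a circumradius bound from compactness of the limiting stationary picture (Lemma \ref{parabolic}), or more directly from the Harnack monotonicity of $\phi$ in $t$ combined with $\kappa\ge\lambda$ and the pinching corollaries; (iii) establish the pointwise limit $\phi(\theta,t)\to 1$ as $t\to-\infty$ for each $\theta$, splitting into the case $\theta$ near $\pm\tfrac\pi2$ (handled by Lemma \ref{parabolic}: the flow converges to the stationary cylinder $\partial\Omega$ on which $\phi\equiv\phi(1,0,\dots)=1$ at the relevant normal directions — actually the cylinder $\Gamma_t\times\R^{n-1}$ picture, where $\phi=1$) and the case $\theta$ bounded away from $\pm\tfrac\pi2$ (handled by Lemma \ref{tip}: Grim hyperplane with $\alpha=1$, where $\kappa+\lambda$-combinations again give $\phi\to 1$ after using $1$-homogeneity and $\lambda/\kappa\to0$ and the \emph{normalization} $\phi(1,0)=1$ — here the curvature of the limiting profile at turning angle $\theta$ is $\cos\theta$ but the Grim \emph{hyperplane} speed is the translation speed, which is $1$, i.e. $\phi(\kappa,\lambda)\to 1$ along it because that is the defining property of a translator); (iv) apply the dominated convergence theorem to conclude $A'(t)\to -\int_0^{2\pi}1\,d\theta = -2\pi$.

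The main obstacle is step (iii), specifically justifying that $\phi(\theta,t)\to 1$ \emph{pointwise in $\theta$} — one must be careful that Lemmas \ref{parabolic} and \ref{tip} give convergence of the rescaled/translated flows, and translating this into a statement about the unrescaled, unnormalized quantity $\phi(\kappa(\theta,t),\lambda(\theta,t))$ at a fixed turning angle requires tracking how turning angle, curvature, and the speed's homogeneity interact; the Grim hyperplane is a translating soliton precisely for the normalized speed $\phi$, so $\phi$ evaluated along it is identically the translation speed $\alpha^{-1}=1$, and the key input making this work is that $\alpha = 1$, already in hand. A secondary technical point is producing the uniform upper bound in step (ii) without circular dependence on the result being proved; the safest choice is to invoke Chou's estimate \eqref{eq:Chou estimate} with $r$ from the inradius lower bound $\gtrsim |t|$ for $t$ in a fixed window around any large negative time and $R$ from the circumradius, noting the ratio $R/r$ stays bounded by Andrews' lemma as used in Proposition \ref{prop:round point}.
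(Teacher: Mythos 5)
Your starting point is the formula $A'(t) = -\int_0^{2\pi}\phi\,d\theta$, read off from \eqref{eq:evolve A}; but that equation as printed is missing a factor of $\kappa^{-1}$ (the Jacobian $ds = d\theta/\kappa$), and the correct first variation of enclosed area is
\[
A'(t) = -\int_{\Gamma_t}\phi\,ds = -\int_0^{2\pi}\frac{\phi}{\kappa}\,d\theta\,,
\]
which is what the paper's own proof actually uses. This is not a pedantic point: it is precisely the source of the $4$-versus-$2\pi$ discrepancy you noticed. Your \emph{first} computation was correct --- $\kappa(\theta,t)\to\cos\theta$, $\lambda/\kappa\to 0$, hence $\phi(\theta,t)\to|\cos\theta|$ pointwise, so $\int\phi\,d\theta\to 4$. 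With the correct formula the integrand is not $\phi$ but $\phi/\kappa = \phi(1,\lambda/\kappa)$, which tends pointwise (a.e.) to $\phi(1,0)=1$ and is uniformly sandwiched in $[1,\phi_1]$ by Lemma \ref{lambdaleqkappa} and monotonicity of $\phi(1,\cdot)$; dominated convergence then gives $-2\pi$. That is, in essence, the paper's argument, which writes $|A'+2\pi| = \bigl\vert\int_{\Gamma_t}(\kappa-\phi)\,ds\bigr\vert = \bigl\vert\int_{S^1}\bigl(\phi(1,\lambda/\kappa)-1\bigr)\,d\theta\bigr\vert$ using $\int_{\Gamma_t}\kappa\,ds = 2\pi$ and $1$-homogeneity.

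The ``fix'' you then adopted --- that $\phi(\theta,t)\to 1$ pointwise for \emph{every} $\theta$ --- is false and the argument built on it would not go through. At $\theta=\pm\pi/2$ one has $\phi(\pm\pi/2,t) = \phi_1\kappa(\pm\pi/2,t)\to 0$ as $t\to-\infty$ (cf. Lemma \ref{lem:phi min est}); and more generally, on the Grim Reaper the normal speed at turning angle $\theta$ is $\cos\theta$, not $1$. The slip is in the statement ``$\phi$ evaluated along a translator is identically the translation speed'': for a translator with velocity $V$ the soliton equation reads $\phi = \langle V,-\nu\rangle$, the \emph{normal projection} of $V$, which is $|V|\cos\theta$ at turning angle $\theta$ --- not $|V|$. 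So the pointwise limit is $|\cos\theta|$, as you originally found. The dominated-convergence skeleton of your proof is sound; what it needs is the correct integrand (the ratio $\phi/\kappa$, not $\phi$), after which the pointwise limit $1$, the uniform bound $\phi_1$, and Lebesgue's theorem give the result cleanly, and your concern about a uniform bound on $\phi$ itself (which indeed does not hold) disappears.
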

\begin{proof}
We have
\begin{align*}
    |A'+2\pi|&=\left\vert-\int_{\Gamma_t}\phi\,ds+\int_{\Gamma_t}\kappa\,ds\right\vert\\
    &= \left\vert\int_{S^1}(\phi(\kappa,\lambda)-\kappa) \frac{d\theta}{\kappa}\right\vert\\
    &=\left\vert\int_{S^1}\left(\phi(1,\lambda/\kappa)-1\right) d\theta\right\vert
\end{align*}
By Lemma \ref{lambdaleqkappa}, $0\leq\phi(1,\lambda/\kappa)-1 \leq \phi(1,1)-1$ and $\lim_{t\to-\infty}\lambda/\kappa=0$ a.e. on $S^1$, so that the integrand tends to zero a.e. and the claim follows by Lebesgue dominated convergence.
\end{proof}

\begin{corollary}
    The width of the limiting Grim hyperplane is maximal, i.e. $\alpha=1$.
\end{corollary}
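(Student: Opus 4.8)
We already know $\alpha\le 1$, so it suffices to prove $\alpha\ge 1$, and the plan is an elementary area comparison. First I would record the two asymptotic rates already in hand. Since $-A'(t)\to 2\pi$ by Lemma~\ref{lem:crudeareadecay}, and $-\ell'(t)=\phi(\pi,t)\to\alpha^{-1}$ by the very definition of $\alpha$ together with the monotonicity of $\phi(\pi,\cdot)$ coming from Andrews' differential Harnack inequality, integrating (equivalently, l'H\^opital's rule) gives
\[
\frac{A(t)}{-t}\longrightarrow 2\pi,\qquad \frac{\ell(t)}{-t}\longrightarrow \alpha^{-1},\qquad\text{hence}\qquad \frac{A(t)}{\ell(t)}\longrightarrow 2\pi\alpha
\]
as $t\to-\infty$. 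The goal is then to bound $A(t)/\ell(t)$ from below by $\pi(1+\alpha)$, since then $2\pi\alpha\ge\pi(1+\alpha)$, i.e. $\alpha\ge 1$.

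For the lower bound I would work with the width function of the profile curve. For $t$ with $\ell(t)$ large write $w(y,t)$ for the length of the slice $\Gamma_t\cap\{e_2\text{-height}=y\}$, i.e. $w(y,t)=x_+(y,t)-x_-(y,t)$ where $x_\pm(\cdot,t)$ are the right and left graphs of the convex curve $\Gamma_t$. Convexity of $\Omega_t$ makes $y\mapsto w(y,t)$ concave, and invariance of $\Gamma_t$ under reflection in the $e_1$-axis makes it even. Two facts pin down its values. First, at $y=0$ we have $w(0,t)=h(t)+\tilde h(t)$ with $h,\tilde h$ the extreme horizontal displacements of $\Gamma_t$; since the solution lies in no slab narrower than $\{|x|<\tfrac{\pi}{2}\}$ while $h,\tilde h$ are monotone in $t$ (because $-h'=\phi(\tfrac{\pi}{2},t)>0$, etc.), it follows that $h(t),\tilde h(t)\to\tfrac{\pi}{2}$, so $w(0,t)\to\pi$. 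Second, near each tip Lemma~\ref{tip} says $\Gamma_t$ is, on larger and larger compact sets as $t\to-\infty$, smoothly close to the scaled Grim reaper $\alpha G^n$, whose full width at $e_2$-distance $K$ below the vertex is $2\alpha\arccos(e^{-K/\alpha})$; hence, for each fixed $K$, $w\big(\pm(\ell(t)-K),t\big)\to 2\alpha\arccos(e^{-K/\alpha})$ as $t\to-\infty$. Using concavity and evenness of $w(\cdot,t)$ (a concave function on an interval integrates to at least the trapezoidal value through its endpoints),
\[
A(t)\ \ge\ \int_{-(\ell(t)-K)}^{\ell(t)-K} w(y,t)\,dy\ \ge\ \big(\ell(t)-K\big)\Big(w(0,t)+w\big(\ell(t)-K,t\big)\Big).
\]
Dividing by $\ell(t)$ and letting $t\to-\infty$ gives $\liminf_{t\to-\infty}A(t)/\ell(t)\ge \pi+2\alpha\arccos(e^{-K/\alpha})$ for every $K$, and then letting $K\to\infty$ yields $\liminf_{t\to-\infty}A(t)/\ell(t)\ge\pi(1+\alpha)$. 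Combined with $A(t)/\ell(t)\to 2\pi\alpha$ this forces $\alpha\ge 1$, and with $\alpha\le 1$ we conclude $\alpha=1$.

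The argument is genuinely soft — it is, after all, stated as a corollary — so there is no real obstacle, only a couple of routine points to get right. The one I would be most careful about is the extraction of the asymptotic tip width from Lemma~\ref{tip}: one must confirm that the point of $\Gamma_t$ lying a fixed $e_2$-distance $K$ from the tip converges, as $t\to-\infty$, to the corresponding point of the scaled Grim reaper, which is immediate from locally uniform smooth convergence once the compact set is fixed, but should be stated explicitly. The other ingredient is simply the already-established existence of the limits $\phi(\pi,t)\to\alpha^{-1}$ and $A'(t)\to-2\pi$. (One could alternatively phrase the area bound through the convex hull of the two pairs of near-tip points and the two extreme equatorial points, a hexagon of area $(\ell(t)-K)\big(\alpha\pi+h(t)+\tilde h(t)-o_K(1)\big)$; but that requires also checking the six points are in convex position inside $\Omega_t$, which needs the mild extra input $|x_*(t)|\le(1-\alpha)\tfrac{\pi}{2}+o(1)$ for the common first coordinate $x_*(t)$ of the two tips. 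The width-function formulation avoids this.)
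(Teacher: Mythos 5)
Correct, and essentially the same argument as the paper: both compare the asymptotic area growth rate $-A'(t)\to 2\pi$ from Lemma~\ref{lem:crudeareadecay} against a lower bound on $A(t)$ built from the width $\approx\pi$ at the equator and the tip width $\approx\pi\alpha$ coming from Lemma~\ref{tip}, to force $\pi(1+\alpha)\le 2\pi\alpha$. The paper realises the lower bound via an inscribed hexagon with vertices at the two equatorial extremes and two pairs of near-tip points, while you integrate the concave, even width function $w(y,t)$; these are equivalent (and, incidentally, the convex-position worry in your final parenthetical is vacuous, since the equatorial points are precisely the horizontal extremes of the convex curve $\Gamma_t$, so every other boundary point — in particular each $p^\pm$ — automatically has $x$-coordinate strictly between them).
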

\begin{proof}
We claim that for any $\varepsilon>0$, there exists $t_\varepsilon<0$ such that for any $t<t_\varepsilon$,
\[
A(t)\leq -(2\pi +\varepsilon)t
\]
Indeed, by Lemma \ref{lem:crudeareadecay}, given any $\varepsilon>0$ there exists $t_\varepsilon'<0$ such that for all $t<t_\varepsilon'$
\begin{align*}
A(t)\le{}&A(t_\varepsilon)+(2\pi+\tfrac{\varepsilon}{2})(t_\varepsilon'-t)\\
={}&-(2\pi +\tfrac{\varepsilon}{2})t+\frac{A(t_\varepsilon')+(2\pi+\tfrac{\varepsilon}{2})t_\varepsilon'}{t}t\,.%\\
\end{align*}
so that we may set $t_\varepsilon\doteqdot \min\{t_\varepsilon',-\frac{2}{\varepsilon}(A(t_\varepsilon')+(2\pi+\tfrac{\varepsilon}{2})t_\varepsilon')\}$.

We bound the area from below by an enclosed trapezoid that we describe below. Let $C(t),D(t)\in \Gamma_t$ such that $y(C(t))=y(D(t))=0$. Without loss of generality, assume that $x(C(t))>x(D(t))$. Let $p(t)=\gamma(0,t)$ be the ``tip" of $\Gamma_t$. Now for any $\delta\in (0,1)$ we can find $t_\delta < 0 $ such that for all $t<t_\delta$, 
\[\pi-\delta \leq x(C(t))-x(D(t)) \leq \pi.\]
Since the tip region converges locally uniformly to the scaled Grim Reaper $\alpha G$, one can, (by taking $t_\delta$ more negative if need be), also find points $p^{\pm}(t)$ on $\Gamma_t$ and a constant $C_\delta$ such that
\[
y(p^-(t))=y(p^+(t))\,,
\]
\[
\pi\alpha-\delta \leq x(p^+(t))-x(p^-(t)) \leq \pi \alpha\,,
\]
and
\[
0 \leq y(p(t))-y(p^\pm(t))=\ell(t)-y(p^\pm(t)) \leq C_\delta\,.
\]
Now since the trapezoid formed by $C(t),D(t),p^\pm(t)$ is contained inside $\Gamma_t$ (by convexity), we estimate $\ell(t)$ using Lemma \ref{lemma:lEstArbit} to get 
\[(\pi\alpha-\delta+\pi-\delta)(\alpha^{-1}t-C_\delta) \leq A(t).\]
Combining these upper and lower bounds, one sees that for $t< \min\{t_\varepsilon,t_\delta
\}$, 
\[
\left[\pi \left(\alpha^{-1}-1\right)-2\delta \alpha^{-1}-\varepsilon \right](-t) \leq \pi(1+\alpha)C_\delta+2\delta C_\delta\]
By choosing $\varepsilon,\delta
$ small, one can make $\left[\pi \left(\alpha^{-1}-1\right)-2\delta\alpha^{-1}-\varepsilon \right]>0$, and allowing $t\to-\infty$ yields a contradiction unless $\alpha=1$. Thus we conclude that $\alpha=1$.
\end{proof}

\subsection{Reflection symmetry}
We now exploit the maximality of the width of the limiting Grim hyperplane to deduce the following reflection symmetry.
\begin{theorem}\label{reflection}
Let $\{\Sigma_t\}_{t\leq 0}$ be a convex, ancient $O(n)$-symmetric solution to $\phi$-flow that lies in the slab $\{|x_1|\leq \pi/2\}$ and in no smaller slab. Then it is necessarily reflection symmetric about the hyperplane $\{x_1=0\}$. 
\end{theorem}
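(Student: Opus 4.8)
The plan is to run Alexandrov's moving plane method on the profile curve $\Gamma_t$, sliding a vertical plane $\{x_1 = c\}$ from $c = \tfrac{\pi}{2}$ leftward, and show that the first value of $c$ at which the reflected cap $R_c(\Gamma_t \cap \{x_1 > c\})$ fails to be strictly inside $\Gamma_t$ must be $c = 0$. The key input that makes this work — and the reason the preceding subsection on unique asymptotics is needed — is that as $t \to -\infty$ the solution looks like a full-width Grim hyperplane near each tip (Lemma \ref{tip} with $\alpha = 1$) and like the stationary slab on compact sets (Lemma \ref{parabolic}). First I would set up, for each fixed $t$, the reflection $R_c(x_1, \hat x) = (2c - x_1, \hat x)$ and the comparison of the two convex bodies $\Omega_t$ and $R_c(\Omega_t)$; by convexity and the containment in the slab, for $c$ slightly less than $\tfrac{\pi}{2}$ the reflected right cap is contained in $\Omega_t$. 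Let $c^*(t)$ be the infimum of those $c$ for which this containment (with the appropriate tangency/transversality at the boundary) holds for all larger parameters.

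Next I would argue $c^*(t)$ is independent of $t$ and equals a single constant $c^* \le 0$: the flow is invariant under the reflection $R_{c^*}$ applied to initial-to-later data only if $c^*$ is stationary, but more directly, if at some $c < c^*$ the reflected cap touches $\partial\Omega_t$ either at an interior point of both curves with the same tangent, or orthogonally at the plane $\{x_1 = c\}$, then by the strong maximum principle / Hopf lemma applied to the difference of the (locally uniformly parabolic, by Corollary \ref{cor:pinching preserved} and the non-degeneracy giving $\phi_\kappa$ bounded above and below near the relevant directions) support functions of the two flows, the two flows coincide, forcing the solution to be symmetric about $\{x_1 = c\}$ — and a convex ancient solution symmetric about a plane at $c \ne 0$ cannot be confined to $\{|x_1| \le \tfrac{\pi}{2}\}$ and no smaller slab unless $c = 0$. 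So it remains to rule out $c^* < 0$, i.e. to rule out that the moving plane gets stuck strictly to the right of the center.

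The main obstacle — and the step I would spend the most care on — is the non-compactness in $t$: a priori the touching between $R_c(\Omega_t)$ and $\Omega_t$ could "escape to $t = -\infty$", so that for each fixed $t$ the containment holds with strict separation yet $c^*$ is still positive because the separation degrades as $t \to -\infty$. This is exactly where the asymptotic analysis enters. I would argue as follows: suppose $c^* > 0$. By Lemma \ref{parabolic}, on any fixed compact region the flow converges to the stationary slab, which is symmetric about $\{x_1 = 0\}$; hence for $t$ very negative and $c \in (0, c^*)$, on compact subsets the reflected cap is strictly interior with a definite gap, so any obstruction must occur near the tips. But near the tip $P(v, t)$, Lemma \ref{tip} with $\alpha = 1$ says the solution converges to the \emph{full-width} Grim hyperplane $G^n_t$, which spans the entire open slab $(-\tfrac{\pi}{2}, \tfrac{\pi}{2})$ and is symmetric about $\{x_1 = 0\}$; so for any $c > 0$ the reflected right half of the Grim hyperplane is strictly inside its left half (the Grim profile $x_1 \mapsto -\log\cos x_1$ is strictly convex and even). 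Transferring this via the smooth convergence, for $t$ sufficiently negative the reflected cap is strictly interior near the tips as well, for every $c \in (0, c^*)$ — contradicting the definition of $c^*$ as an infimum of parameters where containment holds. Therefore $c^* = c = 0$, and running the plane from both sides (or invoking the $R_0$-symmetry just obtained) gives reflection symmetry about $\{x_1 = 0\}$. The routine points I would only sketch: the Hopf-lemma argument at a boundary tangency orthogonal to the moving plane, uniform parabolicity of the profile-curve equation (already established in the existence section), and the elementary convex-geometry fact that strict containment near tips plus strict containment on compacta yields global strict containment for a bounded convex body.
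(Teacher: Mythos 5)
Your proposal follows the same overall strategy that the paper (via its citation of \cite[Theorem~6.2]{BLT}) uses: a parabolic Alexandrov moving-plane argument whose engine is the pair of asymptotic results (local convergence to the stationary slab plus tip convergence to a \emph{full-width} Grim hyperplane, i.e.\ $\alpha=1$), together with the strong maximum principle/Hopf lemma at a touching point and the observation that symmetry about $\{x_1=c\}$ with $c\neq 0$ contradicts ``lies in no smaller slab.'' Since the paper itself just defers to \cite{BLT} for this theorem, you have essentially reproduced (a sketch of) the cited proof, and the structure is sound.

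However, I would push back on the sentence you file under ``routine points'': the claimed \emph{elementary convex-geometry fact that strict containment near tips plus strict containment on compacta yields global strict containment for a bounded convex body} is not true as stated, and it is precisely where the real work is. For an $O(n)$-invariant profile curve, containment of the reflected cap $R_c\left(\Omega_t\cap\{x_1>c\}\right)$ in $\Omega_t$ amounts to the \emph{midline} condition $m(y,t)\coloneqq\tfrac{1}{2}\left(x^+(y,t)+x^-(y,t)\right)\le c$ on the relevant heights $y$. Convexity gives $x^+$ concave and $x^-$ convex in $y$, but $m=\tfrac12\left(x^+ +x^-\right)$ is the sum of a concave and a convex function and is therefore \emph{not} controlled by its values at the two ends. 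For instance, taking $x^+$ linear and $x^-$ strictly convex (with the same endpoint values), the midline can bulge well above its endpoint values in the middle; one cannot rule out, on convexity grounds alone, that the touching ``hides'' at a height $y$ with $y/\ell(t)\to\mu\in(0,1)$. Moreover, because the convergences in Lemmas~\ref{parabolic} and~\ref{tip} are only \emph{locally} uniform and $\ell(t)\to\infty$, the tip neighborhood $\{\ell(t)-N\le y\le \ell(t)\}$ and the compact region $\{|y|\le M\}$ do not automatically exhaust $[0,\ell(t)]$ as $t\to-\infty$ for any choice of $M,N$. So a genuine additional argument is needed to close the intermediate region, e.g.\ a parabolic maximum principle for the midline (the difference $f^+ - (-f^-)$ of two graphical solutions), or a quantitative intermediate estimate as in \cite{BLT}. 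You should state this step as a lemma with a proof rather than as a routine convexity fact.
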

\begin{proof}
The argument is a standard application of the Alexandrov reflection principle, which holds for parabolic flows, and hence the same as \cite[Theorem 6.2]{BLT}. 
\end{proof}

\section{Area and displacement estimates} \label{sec:estimates}
We continue to study arbitrary convex $O(n)$-invariant ancient solutions lying in a slab. The aim here is to provide improved estimates for the enclosed area $A(t)=\int_{-t}^{0}\int_{\Gamma_\tau}\phi\,ds\,d\tau$ and the vertical displacement $\ell(t)$ for such solutions.

Since $\phi$ is non-degenerate, we may write
\begin{align*}              
    \phi(\kappa,\lambda)&=\kappa\phi(1,\lambda/\kappa)\\     &=\kappa\left(\phi(1,0)+\phi_\lambda(1,0)(\lambda/\kappa)+\phi_{\lambda\lambda}(1,\xi)(\lambda/\kappa)^2\right)\\
    &=\kappa\phi(1,0)+\lambda \dot{\phi}_1+\phi_{\lambda\lambda}(1,\xi)\lambda^2/\kappa\\
    &\leq \kappa+\lambda \dot{\phi}_1+C\lambda^2/\kappa\,,
\end{align*}
and hence
\begin{equation}\label{eq:phiapprox}  
    \kappa+\lambda \dot{\phi}_1-C\lambda^2/\kappa \leq \phi(\kappa,\lambda) \leq \kappa+\lambda \dot{\phi}_1+C\lambda^2/\kappa\
\end{equation}
where $0\leq\xi\leq\lambda/\kappa$ is given by the mean value theorem, $\dot{\phi}_1 \coloneqq \phi_\lambda(0,1)$ and $C\coloneqq \sup_{\xi\in[0,1]}\vert\phi_{\lambda\lambda}(1,\xi)\vert$. Thus the area estimate will reduce to estimates of $\int_{\Gamma_t}\lambda ds$ and $\int_{\Gamma_t}(\lambda^2/\kappa) ds$.

Due to the reflection symmetry of such solutions (Theorem \ref{reflection}), we have that the horizontal displacement
\[h(t) \coloneqq \max_{\theta \in S^1} \langle \gamma(\theta,t),e_1 \rangle\]
satisfies
\[h(t)=\langle\gamma(\pi/2,t),e_1\rangle=-\langle\gamma(-\pi/2,t),e_1\rangle\,.\]

Moreover, due to Lemma \ref{lambdaleqkappa}, we have
\[\kappa(\theta,t)\geq\lambda(\theta,t)\geq\lambda(\pi/2,t)=\kappa(\pi/2,t)\,,\]
which then implies that the minimum value of $\phi$ occurs at the poles, i.e.
\begin{equation}
    \min_{\theta \in S^1} \phi(\theta,t) =\phi(\pm \pi/2,t)\,.
\end{equation}

We obtain the desired estimates by using a graphical representation for the solution. The part of the curve $\Gamma_t$ with $y\geq0$ can be written as a graph $(x,u(x,t))$ with $x\in[-h(t),h(t)]$. In this representation,
\begin{equation}
    \frac{1}{4}\int_{\Gamma_t}\lambda ds=\int_0^{h(t)}\frac{1}{u(x,t)}dx
\end{equation}
and
\begin{equation}\label{lambda^2}
    \frac{1}{4}\int_{\Gamma_t}\frac{\lambda^2}{\kappa} ds=\int_0^{h(t)}\frac{\lambda^2}{\kappa}\frac{dx}{|{\cos\theta}|}=\int_0^{h(t)}\frac{|{\cos\theta}|}{\kappa u(x,t)^2}dx
\end{equation}
coming from the fact that $ds=\frac{dx}{\vert{\cos\theta}\vert}$ and $\lambda= \frac{\vert{\cos\theta}\vert}{u}$.

Moreover, the function $u$ satisfies the graphical $\phi$-flow equation
\[\frac{du}{dt}=-\phi\sqrt{1+u_x^2}.\]
Since $\phi$ is nondecreasing due to the Harnack inequality, and since the solution converges to the Grim reaper, we have the basic estimate
\[-\frac{du}{dt}\geq\phi(1,0)=1.\]
Now, given $x\in[0,\frac{\pi}{2})$, we may denote by $T(x)$ the time when the profile curve passes through the point $(x,0)$, i.e. $h(T(x))=x$, and hence $u(x,T(x))=0$. Now, integrating the previous inequality from $T$ to $t$, we get for all $x\in[0,h(t))$ that
\[u(x,t)\geq -t+T(x)\,.\]

Observe that
\[\kappa(\pi/2,t) \leq 2\frac{h(t)-x}{u(x,t)^2}\,,\]
which when combined with $\phi(\kappa,\lambda)\leq \phi(\kappa,\kappa)=\phi(1,1)\kappa=\phi_1\kappa$ yields
\begin{equation}
    \phi_{\min}(t) \leq 2\phi_1 \frac{h(t)-x}{u(x,t)^2}\,.
\end{equation}
Putting $x=0$, we get
\begin{equation}
    -\frac{dh}{dt}=\phi_{\min} \leq \frac{2\phi_1 h}{(-t)^2}\,.
\end{equation}
which, upon integration, gives
\begin{equation}
    h(t) \geq \frac{\pi}{2}e^{\frac{2\phi_1}{t}} \geq \frac{\pi}{2}\left(1-\frac{2\phi_1}{-t}\right).
\end{equation}
Setting $t=T(x)$ now gives
\begin{equation}
    u(x,t)\geq -t+T(x) \geq -t-\frac{\phi_1\pi}{\frac{\pi}{2}-x}\,.
\end{equation}
Thus we have the following estimate:
\begin{lemma}\label{lem:uhphiasymp}
For each $k\in\mathbb{N}$ there exists a constant $c_k$ such that for all $t<0$ and all $x\in[0,h(t)]$,
\begin{enumerate}[(i)]
    \item $\phi_{\min}(t) \leq \frac{\phi_1 \pi c_k}{(-t)^{k+1}}$ 
    \item $h(t) \leq \frac{\pi}{2}\left(1-\frac{2\phi_1 c_k}{(-t)^{k}}\right)$
    \item $u(x,t) \geq-t- \left(\frac{\phi_1 \pi c_k}{\frac{\pi}{2}-x}\right)^{1/k}$
\end{enumerate}
\end{lemma}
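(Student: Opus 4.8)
The plan is to bootstrap the three estimates simultaneously via induction on $k$, using the already-established base case $k=1$ (which is exactly the displayed inequalities immediately preceding the lemma statement: $\phi_{\min}(t) \le 2\phi_1 h/(-t)^2$ with $h \le \frac{\pi}{2}$ gives (i) for $k=1$; the integration gives (ii) for $k=1$; and setting $t = T(x)$ in $u(x,t) \ge -t + T(x)$ together with the lower bound on $h$ gives (iii) for $k=1$). The key mechanism is the feedback loop: a better lower bound on $u(x,t)$ (item (iii)) feeds into a better upper bound on $\phi_{\min}$ via the inequality $\phi_{\min}(t) \le 2\phi_1(h(t)-x)/u(x,t)^2$, which in turn sharpens the ODE $-h'(t) = \phi_{\min}(t)$ and hence sharpens the upper bound on $h$ (item (ii)), and finally, re-evaluating at $t = T(x)$ (using $h(T(x)) = x$, so that $\frac{\pi}{2} - x = \frac{\pi}{2} - h(T(x))$ is controlled by the bound on $h$) sharpens the lower bound on $u$ again, completing the inductive step to level $k+1$.

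More concretely, suppose items (i)--(iii) hold at level $k$. First I would improve (i): plug the level-$k$ bound (iii), $u(x,t) \ge -t - (\phi_1\pi c_k/(\frac{\pi}{2}-x))^{1/k}$, into $\phi_{\min}(t) \le 2\phi_1(h(t)-x)/u(x,t)^2 \le \phi_1\pi/u(x,t)^2$; choosing $x$ as a function of $t$ tending to $\frac{\pi}{2}$ at an appropriate polynomial rate (e.g. $\frac{\pi}{2} - x \sim (-t)^{-1}$ or similar, optimizing the trade-off between the $h(t) - x$ factor and the $u^{-2}$ factor) yields $\phi_{\min}(t) \le \phi_1\pi c_{k+1}/(-t)^{k+2}$ for a suitable new constant. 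Then I would integrate $-h'(t) = \phi_{\min}(t) \le \phi_1\pi c_{k+1}/(-t)^{k+2}$ from $t$ to $0$ (using $h(0^-) = \frac{\pi}{2}$, or more carefully $\lim_{t\to 0^-} h(t) = \frac{\pi}{2}$ from convexity and the slab condition) to get $h(t) \le \frac{\pi}{2}(1 - 2\phi_1 c_{k+1}/(-t)^{k+1})$ after adjusting constants. Finally, setting $t = T(x)$ so that $h(T(x)) = x$, the level-$(k+1)$ bound (ii) rearranges to $-T(x) \ge$ (something like) $(\phi_1\pi c_{k+1}/(\frac{\pi}{2}-x))^{1/(k+1)}$, and substituting back into $u(x,t) \ge -t + T(x)$ gives item (iii) at level $k+1$.

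The main obstacle I anticipate is bookkeeping of the constants $c_k$ through the optimization in the first step: the choice of how fast to let $x \to \frac{\pi}{2}$ as a function of $t$ must be tuned so that both the numerator $h(t) - x$ contributes a gain and the denominator $u(x,t)^2 \gtrsim (-t - (\text{const}/(\frac{\pi}{2}-x))^{1/k})^2$ stays comparable to $(-t)^2$; one needs $(\frac{\pi}{2} - x)^{-1/k} = o(-t)$, i.e. $\frac{\pi}{2} - x \gg (-t)^{-k}$, while also wanting $h(t) - x$ small, and the level-$k$ bound (ii) is precisely what guarantees $h(t) - x \le \frac{\pi}{2} - x$ can be made to interact favorably. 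A clean way to organize this is to not optimize at all but simply substitute a fixed convenient choice such as $x = x(t)$ defined by $\frac{\pi}{2} - x(t) = (-t)^{-k}$ (valid for $t$ sufficiently negative, and for smaller $|t|$ the estimates are trivial after enlarging $c_{k+1}$), which immediately makes the $u^{-2}$ factor $O((-t)^{-2})$ and the $h(t)-x$ factor $O((-t)^{-k})$, yielding the $(-t)^{-(k+2)}$ decay with room to spare. A secondary technical point is justifying $\lim_{t \to 0^-} h(t) = \frac{\pi}{2}$ (equivalently that the solution fills the full open slab up to time $0$); but this is already part of the standing hypotheses on $\{\Sigma_t\}$ (it lies in the slab and in no smaller slab, and is convex and ancient), together with the limiting Grim hyperplane analysis of the previous section, so it may be cited rather than reproved.
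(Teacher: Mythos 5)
Your overall strategy---bootstrapping the three estimates simultaneously by induction, with the feedback loop $u \Rightarrow \phi_{\min} \Rightarrow h \Rightarrow T(x) \Rightarrow u$---is exactly the approach the paper intends (it defers the details to \cite[Lemma 7.1]{BLT}, where the same induction appears). However, the integration step you use to produce item (ii) is wrong as written, and the error propagates. You integrate $-h'=\phi_{\min}$ from $t$ to $0$ using the boundary value $\lim_{t\to 0^-}h(t)=\tfrac{\pi}{2}$. This limit is false: the solutions under consideration are compact and contract to the origin at $t=0$ (cf.\ Proposition~\ref{prop:round point} and the uniqueness argument in \S\ref{sec:uniqueness}), so $h(0^-)=0$; it is $\lim_{t\to-\infty}h(t)$ that equals $\tfrac{\pi}{2}$. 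Indeed, $-h'=\phi_{\min}>0$ forces $h$ to be strictly decreasing, so $h(t)<\tfrac{\pi}{2}$ for every finite $t$, directly contradicting $h(0^-)=\tfrac{\pi}{2}$; and the integral $\int_t^0(-s)^{-(k+2)}\,ds$ diverges, so the integration cannot even be performed. The correct step integrates $-h'=\phi_{\min}\leq \phi_1\pi c_k/(-t)^{k+1}$ from $-\infty$ to $t$ against $h(-\infty)=\tfrac{\pi}{2}$, yielding $\tfrac{\pi}{2}-h(t)\leq \phi_1\pi c_k/\bigl(k(-t)^k\bigr)$, a \emph{lower} bound on $h$. (The ``$\leq$'' in the lemma's item (ii) is evidently a sign typo; the argument both produces and requires ``$\geq$''.)

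This direction matters for the last step. From $x=h(T(x))\geq\tfrac{\pi}{2}\bigl(1-c/(-T(x))^k\bigr)$ one solves for the \emph{upper} bound $-T(x)\leq\bigl(c'/(\tfrac{\pi}{2}-x)\bigr)^{1/k}$, which feeds into $u(x,t)\geq -t+T(x)\geq -t-\bigl(c'/(\tfrac{\pi}{2}-x)\bigr)^{1/k}$, i.e.\ item (iii). Your rearrangement ``$-T(x)\geq(\cdots)^{1/(k+1)}$'' (the direction forced by taking the stated ``$\leq$'' at face value) gives an upper bound on $T(x)$, which yields nothing when substituted into $u\geq -t+T(x)$. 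Once these two signs are fixed, the remaining step---plugging the level-$k$ bound (iii) into $\phi_{\min}\leq 2\phi_1(h(t)-x)/u(x,t)^2$ with $\tfrac{\pi}{2}-x$ set to a suitable constant times $(-t)^{-k}$ (not the bare $(-t)^{-k}$: you need the constant, e.g.\ $2^k\phi_1\pi c_k$, to guarantee $u(x,t)\geq\tfrac12(-t)$)---closes the induction essentially as you describe.
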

\begin{proof}
In view of the above estimates, we we may proceed by induction, as in \cite[Lemma 7.1]{BLT}.
\end{proof}

This allows us to estimate the integral that we wish to estimate:

\begin{lemma}\label{lem:lambdaintegrals}
    For any $\varepsilon>0$,
    \begin{enumerate}[(i)]
        \item
        $\int_{\Gamma_t}\lambda ds \leq \frac{2\pi}{-t}+o\left(\frac{1}{(-t)^{2-\varepsilon}}\right)$
        \item $\int_{\Gamma_t}\frac{\lambda^2}{\kappa} ds \leq O\left(\frac{1}{(-t)^{2}}\right)$
    \end{enumerate}
\end{lemma}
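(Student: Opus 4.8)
The plan is to split each integral into a ``tip'' contribution near $\theta=\pm\pi/2$ and a ``body'' contribution away from the poles, using the graphical representation and Lemma \ref{lem:uhphiasymp} to control each piece. For part (i), write $\frac14\int_{\Gamma_t}\lambda\,ds=\int_0^{h(t)}\frac{dx}{u(x,t)}$. In the body region $x\in[0,h(t)-\delta]$ for a suitable $\delta=\delta(t)\to 0$ slowly (say $\delta=(-t)^{-1+\varepsilon/2}$ or similar), the estimate $u(x,t)\geq -t+T(x)\geq -t$ gives $\int_0^{h(t)-\delta}\frac{dx}{u}\leq \frac{h(t)-\delta}{-t}\leq \frac{\pi/2}{-t}$; sharpening this using $h(t)\leq \frac{\pi}{2}\bigl(1-\tfrac{2\phi_1 c_k}{(-t)^k}\bigr)$ and $u(x,t)\geq -t$ — or, better, $u(x,t)\geq -t+T(x)$ with $T(x)$ growing as $x\to\pi/2$ — recovers $\frac{\pi/2}{-t}$ up to lower-order corrections. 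In the tip region $x\in[h(t)-\delta,h(t)]$, the curve is modelled on the Grim hyperplane (Lemma \ref{tip} with $\alpha=1$), so $u(x,t)\gtrsim -\log\bigl(\tfrac{\pi}{2}-x\bigr)+(-t)$ there, or more crudely $u(x,t)\geq -t+T(x)$ with $T(x)\geq -\tfrac{2\phi_1\pi}{\pi/2-x}$ reversed appropriately from Lemma \ref{lem:uhphiasymp}(iii); in any case the tip interval has length $\delta=o\bigl((-t)^{-1+\varepsilon}\bigr)$ and $u\gtrsim -t$ there, contributing $o\bigl((-t)^{-2+\varepsilon}\bigr)$. Adding the two pieces and multiplying by $4$ gives (i).

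For part (ii), use \eqref{lambda^2}: $\frac14\int_{\Gamma_t}\frac{\lambda^2}{\kappa}\,ds=\int_0^{h(t)}\frac{|\cos\theta|}{\kappa\,u(x,t)^2}\,dx$. Here the key input is the pointwise bound $\kappa(\pi/2,t)\leq 2\frac{h(t)-x}{u(x,t)^2}$, but since we need an upper bound on $\frac{1}{\kappa u^2}$ we instead want a \emph{lower} bound on $\kappa$, which is not directly available; so I would proceed differently. Observe that $|\cos\theta|=u\,\lambda$ and $\lambda\leq\kappa$, so $\frac{|\cos\theta|}{\kappa u^2}=\frac{\lambda}{\kappa u}\leq\frac{1}{u}$, giving $\int_{\Gamma_t}\frac{\lambda^2}{\kappa}\,ds\leq\int_{\Gamma_t}\lambda\,ds=O\bigl((-t)^{-1}\bigr)$ by part (i) — but this is one power too weak. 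To gain the extra factor of $(-t)^{-1}$, split again: in the body, $|\cos\theta|$ is bounded below away from zero, say $|\cos\theta|\geq c(\delta)>0$ fails — rather $|\cos\theta|$ is \emph{small} near the poles where $\theta\approx\pm\pi/2$, which is exactly the tip. In the body region $\lambda/\kappa\to 0$: indeed $\frac{\lambda^2}{\kappa}=\lambda\cdot\frac{\lambda}{\kappa}$ and $\frac{\lambda}{\kappa}(\theta,t)\to 0$ locally uniformly on compact subsets of $(-\pi/2,\pi/2)$ by Lemma \ref{tip}, with a quantitative rate $\frac{\lambda}{\kappa}\lesssim (-t)^{-1}$ from the Grim hyperplane asymptotics (the curvature $\kappa$ of the Grim reaper at fixed $\theta$ is bounded below, while $\lambda=\frac{\cos\theta}{u}\sim\frac{\cos\theta}{-t}$). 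This yields $\frac{\lambda^2}{\kappa}\lesssim \lambda\cdot(-t)^{-1}$ on the body, hence a body contribution of order $(-t)^{-1}\cdot\int_{\text{body}}\lambda\,ds=O\bigl((-t)^{-2}\bigr)$; on the tip, $|\cos\theta|$ itself supplies smallness, and $u\gtrsim -t$, so that piece is also $O\bigl((-t)^{-2}\bigr)$. Combining gives (ii).

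The main obstacle I anticipate is making the tip/body splitting quantitative in a way that is compatible across the two parts: one needs a cutoff scale $\delta(t)$ (equivalently a cutoff in $\theta$ around $\pm\pi/2$) for which simultaneously (a) the tip interval is short enough that its contribution is $o\bigl((-t)^{-2+\varepsilon}\bigr)$, and (b) on the body, Lemma \ref{tip} gives a usable quantitative rate $\frac{\lambda}{\kappa}=O\bigl((-t)^{-1}\bigr)$ — the latter requires upgrading the qualitative locally-uniform convergence of Lemma \ref{tip} to a rate, which should follow from the explicit form of the Grim hyperplane $G^n_t$ together with the smooth convergence, but needs care about uniformity as the compact $\theta$-set is allowed to expand with $t$. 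The cleanest route is probably to feed Lemma \ref{lem:uhphiasymp}(iii) back in: it already encodes the rate $u(x,t)\geq -t - \bigl(\tfrac{\phi_1\pi c_k}{\pi/2-x}\bigr)^{1/k}$ for every $k$, which quantifies how close the body profile is to the flat slab, and combined with $\lambda=\frac{|\cos\theta|}{u}$ and the lower curvature bound near the tip this should give everything needed without invoking Grim hyperplane asymptotics beyond Lemma \ref{tip}. I would carry out (i) first, then (ii), reusing the same $\delta(t)$ and the same estimates throughout.
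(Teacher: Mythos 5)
Your outline for part (i) is broadly in the spirit of the paper (which follows \cite[Claim~7.2.1]{BLT}), but the specific estimate you use on the tip piece is false, and the gap you flag in part (ii) is real and not closed by the fix you propose.

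On the tip: you claim that on $[h(t)-\delta,h(t)]$ one has $u\gtrsim -t$, giving a tip contribution $\lesssim \delta/(-t)$. But $u(h(t),t)=0$, and $u(x,t)\sim\sqrt{2(h(t)-x)/\kappa_{\min}(t)}$ as $x\to h(t)$, so $1/u$ blows up like $(h(t)-x)^{-1/2}$. The crude lower bound $u\geq -t+T(x)$ tends to $0$ (not to $-t$) as $x\to h(t)$, so it cannot be used uniformly on the tip interval. The paper resolves this exactly as in BLT: compare the profile with the circular arc $c(x,t)=\sqrt{\rho(t)^2-(x-(h(t)-\rho(t)))^2}$ of radius $\rho(t)=(-t)^2/\pi$, define $\underline x(t)$ as the first crossing, and bound the tip contribution by $\int_{\underline x(t)}^{h(t)}dx/c(x,t)$, which is $o((-t)^{-2})$; this circular comparison is essential and does not appear in your sketch.

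On part (ii): you correctly reduce the integrand to $\tfrac{\lambda^2}{\kappa}\tfrac{1}{|\cos\theta|}=\tfrac{\lambda}{\kappa}\cdot\tfrac{1}{u}$ and observe you need $\tfrac{\lambda}{\kappa}\lesssim (-t)^{-1}$ on the body, but you then appeal to a ``quantitative rate'' from Lemma~\ref{tip}, which is a purely qualitative locally-uniform convergence statement, and you yourself identify the uniformity issue as the compact $\theta$-set expands with $t$. Your proposed fix --- feeding Lemma~\ref{lem:uhphiasymp}(iii) back in --- does not close this gap, because that lemma only gives a lower bound on $u$, not a lower bound on $\kappa$. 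The missing ingredient, which the paper uses to write $\tfrac{\lambda^2}{\kappa}\tfrac{1}{|\cos\theta|}\leq \tfrac{1}{u^2}$ (up to the harmless constant $\phi_1$), is the pointwise inequality $\kappa\gtrsim |\cos\theta|$. That inequality comes not from Lemma~\ref{lem:uhphiasymp} but from the differential Harnack inequality combined with Lemma~\ref{tip} (with $\alpha=1$): $\phi(\theta,\cdot)$ is nondecreasing in $t$ and converges to $\cos\theta$ as $t\to-\infty$, whence $\phi\geq |\cos\theta|$ and $\kappa\geq \phi/\phi_1\geq |\cos\theta|/\phi_1$. With this, the body piece becomes $\int_0^{x_0(t)}\tfrac{dx}{u^2}\leq x_0(t)/u(x_0(t),t)^2$ (monotonicity of $u$ in $x$), which Lemma~\ref{lem:uhphiasymp}(iii) controls directly; the tip piece reduces to the part~(i) estimate via $\lambda/\kappa\leq 1$. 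So the paper's argument is not a sharpened version of your convergence-rate idea but a genuinely different mechanism (Harnack monotonicity producing a global pointwise speed lower bound), and your proposal as written has an unfilled gap precisely where you suspected one.
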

\begin{proof}
The first estimate may be proved as in \cite[Claim 7.2.1]{BLT}. The second estimate is not required in \cite{BLT} because the mean curvature $H=\kappa+(n-1)\lambda$ is linear in $\kappa$ and $\lambda$. Nonetheless, a similar idea works. %; we will only remark on the differences necessary to make the argument in \cite[Claim 7.2.1]{BLT} work). 
Indeed, following \cite[Claim 7.2.1]{BLT}, we define $c(x,t)=\sqrt{\rho(t)^2-(x-(h(t)-\rho(t)))^2},$ where $\rho(t)=\frac{(-t)^2}{\pi}$, and set $\underline{x}(t)=\inf\{x\in[\pi/4,h(t)]:u(x,t)=c(x,t)\}$, and split the integral to be estimated as
\[\frac{1}{4}\int_{\Gamma_t}\frac{\lambda^2}{\kappa} ds=\int_0^{h(t)}\frac{\lambda^2}{\kappa}\frac{dx}{\vert{\cos\theta}\vert}=\int_0^{\underline x(t)}\frac{\lambda^2}{\kappa}\frac{dx}{\vert{\cos\theta}\vert}+\int_{\underline x(t)}^{h(t)}\frac{\lambda^2}{\kappa}\frac{dx}{\vert{\cos\theta}\vert}.\]
Since $\lambda/\kappa \leq 1$,
\begin{equation}\label{lamb^2pt1}
    \int_{\underline x(t)}^{h(t)}\frac{\lambda^2}{\kappa}\frac{dx}{\vert{\cos\theta}\vert}\leq \int_{\underline x(t)}^{h(t)}\frac{\lambda\,dx}{\vert{\cos\theta}\vert} =o(t^{-2})
\end{equation}
just as in \cite[Claim 7.2.1]{BLT}. The remaining term is estimated as follows. Let $k\in\mathbb{N}$ and choose $x_0(t)=\frac{\pi}{2}-\frac{\pi nc_k}{(-t)^k}$. If $\underline x(t)\leq x_0(t)$, then
\begin{equation*}
     \int_0^{\underline x(t)}\frac{\lambda^2}{\kappa}\frac{dx}{\vert{\cos\theta}\vert} \leq \int_0^{x_0(t)}\frac{\lambda^2}{\kappa}\frac{dx}{\vert{\cos\theta}\vert}\leq \int_0^{x_0(t)}\frac{1}{u(x,t)^2}dx
\end{equation*}
and if not,
\begin{align*}
    \int_0^{\underline x(t)}\frac{\lambda^2}{\kappa}\frac{dx}{\vert{\cos\theta}\vert} &\leq \int_0^{x_0(t)}\frac{\lambda^2}{\kappa}\frac{dx}{\vert{\cos\theta}\vert}+\int_{x_0(t)}^{\underline x(t)}\frac{\lambda^2}{\kappa}\frac{dx}{\vert{\cos\theta}\vert}\\
    &\leq \int_0^{x_0(t)}\frac{\lambda^2}{\kappa}\frac{dx}{\vert{\cos\theta}\vert}+\int_{x_0(t)}^{\underline x(t)}\lambda\frac{dx}{\vert{\cos\theta}\vert}\\
    &\leq \int_0^{x_0(t)}\frac{1}{u(x,t)^2}dx +\int_{x_0(t)}^{h(t)}\frac{dx}{c(x,t)}\,.
\end{align*}
Either way, Lemma \ref{lem:uhphiasymp} (cf. \cite[Claim 7.2.1]{BLT}) yields
\begin{align}\label{lamb^2pt2}
    \int_0^{\underline x(t)}\frac{\lambda^2}{\kappa}\frac{dx}{\vert{\cos\theta}\vert} 
    &\leq \frac{x_0(t)}{u(x_0(t),t)^2}+\int_{x_0(t)}^{h(t)}\frac{dx}{c(x,t)}\nonumber\\
    &\leq \frac{\pi}{2}\left(\frac{1}{-t}+o((-t)^{-1})\right)^2+o((-t)^{-2})\nonumber\\
    &\leq O((-t)^{-2})\,.
\end{align}
Putting equations (\ref{lamb^2pt1}) and (\ref{lamb^2pt2}) together yields
\[\frac{1}{4}\int_{\Gamma_t}\frac{\lambda^2}{\kappa} ds \leq O((-t)^{-2})
\]
as claimed.
\end{proof}

This leads us to the following area estimate:
\begin{corollary}
    \begin{equation}
        -t+\dot{\phi}_{1}\log(-t)-C\leq \frac{A(t)}{2\pi} \leq -t+\dot{\phi}_{1}\log(-t) +C
    \end{equation} 
\end{corollary}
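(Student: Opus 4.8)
The plan is to integrate the pointwise speed estimate \eqref{eq:phiapprox} over the profile curve and then in time, using the integral bounds from Lemma \ref{lem:lambdaintegrals} to control the error terms. Recall that
\[
-\frac{dA}{dt}=\int_{\Gamma_t}\phi\,ds=\int_{\Gamma_t}\kappa\,ds+\dot\phi_1\int_{\Gamma_t}\lambda\,ds+\mathrm{(error)}\,,
\]
where, by \eqref{eq:phiapprox}, the error term is bounded in absolute value by $C\int_{\Gamma_t}(\lambda^2/\kappa)\,ds$. The first integral is exactly $\int_{S^1}d\theta=2\pi$ by the turning-angle parametrization. For the second integral, Lemma \ref{lem:lambdaintegrals}(i) gives $\int_{\Gamma_t}\lambda\,ds\le \frac{2\pi}{-t}+o((-t)^{-2+\varepsilon})$; I would also need a matching lower bound of the form $\int_{\Gamma_t}\lambda\,ds\ge \frac{2\pi}{-t}+o((-t)^{-2+\varepsilon})$ (or at least $\frac{2\pi}{-t}-o((-t)^{-1})$), which should follow from the same graphical argument using the lower bound $u(x,t)\le -t+\text{(something bounded)}$ coming from the fact that the solution is asymptotic to the Grim hyperplane and contained in the slab; alternatively one can extract it from \cite[Claim 7.2.1]{BLT}. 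For the error integral, Lemma \ref{lem:lambdaintegrals}(ii) gives $\int_{\Gamma_t}(\lambda^2/\kappa)\,ds\le O((-t)^{-2})$, which is integrable at $-\infty$.

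The key computation is then the time integration: integrating from $t$ to some fixed reference time $t_0<0$,
\[
A(t_0)-A(t)=-\int_t^{t_0}\frac{dA}{d\tau}\,d\tau=\int_t^{t_0}\Big(2\pi+\frac{2\pi\dot\phi_1}{-\tau}+E(\tau)\Big)\,d\tau\,,
\]
where $|E(\tau)|\le C'(-\tau)^{-2}$ outside a compact set (here I absorb the $o$-term from $\int\lambda\,ds$ into $E$, noting it too is integrable near $-\infty$ since $2-\varepsilon>1$). The first term integrates to $2\pi(t_0-t)$, contributing the leading $-t$ after dividing by $2\pi$; the second integrates to $2\pi\dot\phi_1(\log(-t)-\log(-t_0))$, contributing the $\dot\phi_1\log(-t)$ term; and the error term, being integrable at $-\infty$, contributes a quantity bounded uniformly in $t$, which gets absorbed into the constant $C$ together with $A(t_0)$ and the $t_0$-dependent constants. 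Dividing through by $2\pi$ yields
\[
\frac{A(t)}{2\pi}=-t+\dot\phi_1\log(-t)+O(1)\,,
\]
which is precisely the claimed two-sided bound for a suitable constant $C=C(\phi,\{\Sigma_t\})$.

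The main obstacle I anticipate is securing the \emph{lower} bound on $\int_{\Gamma_t}\lambda\,ds$ with a sufficiently small error, since Lemma \ref{lem:lambdaintegrals} as stated only provides the upper bound; the symmetric estimate requires knowing that $u(x,t)$ is not much larger than $-t$ on the bulk of the interval $[0,h(t)]$, i.e. a quantitative form of the convergence to the Grim hyperplane away from the tips, together with the slab constraint $h(t)\le \pi/2$ to control the integral $\int_0^{h(t)}u^{-1}\,dx$ from below by $\int_0^{\pi/2-\delta(t)}(-t+\text{bdd})^{-1}\,dx\ge \frac{2\pi}{-t}-o((-t)^{-1})$ where $\delta(t)\to 0$ is controlled by Lemma \ref{lem:uhphiasymp}(ii). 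Once both one-sided bounds on $\int\lambda\,ds$ are in hand with errors $o((-t)^{-1})$ — indeed better, $O((-t)^{-2+\varepsilon})$ suffices and is integrable — the remaining steps are the routine integrations indicated above, carried out exactly as in \cite[Corollary 7.3]{BLT}.
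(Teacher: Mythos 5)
Your upper bound argument is correct and coincides with the paper's: integrate $A'$ from \eqref{eq:evolve A} using the upper inequality in \eqref{eq:phiapprox} together with Lemma \ref{lem:lambdaintegrals}, noting that $\int_{\Gamma_t}\kappa\,ds=2\pi$ exactly and that the error $\int_{\Gamma_t}(\lambda^2/\kappa)\,ds=O((-t)^{-2})$ is integrable near $-\infty$.

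The lower bound, however, has a genuine gap, which you partly anticipate yourself. Your route requires a direct lower bound $\int_{\Gamma_t}\lambda\,ds\ge \tfrac{2\pi}{-t}-o((-t)^{-1})$, for which you invoke an estimate of the form $u(x,t)\le -t+O(1)$. But at this point in the argument no upper bound on $\ell(t)$ (equivalently, on $u$) has been established: the only displacement control available is the \emph{lower} bound $\ell(t)\ge -t$ (Lemma \ref{lemma:lEstArbit} with $\alpha=1$), and the convergence to the Grim hyperplane in Lemma \ref{tip} is subsequential and qualitative, hence does not by itself give a uniform bound $\ell(t)\le -t+O(1)$. The estimate $\ell(t)\le -t+o((-t)^\varepsilon)$ is precisely Lemma \ref{lem:ooft^-eps}, which is \emph{deduced from} the corollary you are trying to prove, so your proposed fix is circular. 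The paper sidesteps this entirely: it derives the needed lower bound on $\int_{\Gamma_t}\lambda\,ds$ from the \emph{already-proved} upper bound on $A(t)$ via Cauchy--Schwarz on the graph,
\[
h(t)^2\le\Bigl(\int_0^{h(t)}u\,dx\Bigr)\Bigl(\int_0^{h(t)}u^{-1}\,dx\Bigr)=\frac{A(t)}{16}\int_{\Gamma_t}\lambda\,ds\,,
\]
so that the lower bound on $h$ from Lemma \ref{lem:uhphiasymp} gives $\int_{\Gamma_t}\lambda\,ds\ge \tfrac{2\pi}{-t}-o((-t)^{-2+\varepsilon})$ without ever using an upper bound on $\ell$. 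Substituting this into the lower inequality of \eqref{eq:phiapprox} and integrating then yields the two-sided estimate. If you replace your proposed lower bound on $\int\lambda\,ds$ by this H\"{o}lder step, the rest of your outline goes through.
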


\begin{proof}
The upper bound follows from integrating $A'$ in (\ref{eq:evolve A}), estimating $\phi$ using the upper bound in equation (\ref{eq:phiapprox}) and using Lemma \ref{lem:lambdaintegrals}. The lower bound will be inferred from the upper bound via an application of H\"{o}lder's inequality. Indeed,
\[h^2(t)\leq \int_0^{h(t)}u(x,t)dx \int_0^{h(t)}u(x,t)^{-1} dx=\frac{A(t)}{16}\int_{\Gamma_t}\lambda ds\,,\]
so that Lemma \ref{lem:uhphiasymp} gives
\[\int_{\Gamma_t}\lambda ds \geq \frac{2\pi\left(1-\frac{2\phi_1c_1}{-t}\right)^2}{-t+\dot{\phi}_{1}\log(-t) +C} \geq \frac{2\pi}{-t}-o\left(\frac{1}{(-t)^{2-\epsilon}}\right).\]
Thus, using the lower bound in \eqref{eq:phiapprox} gives us the desired lower bound for area.
\end{proof}   

Now the following refinement to the displacement estimate is obtained:
\begin{lemma}\label{lem:ooft^-eps}
    For any $\varepsilon\in (0,1)$,
    \begin{equation}
        \ell(t) \leq -t + o((-t)^{\varepsilon}).
    \end{equation}
\end{lemma}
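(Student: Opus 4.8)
The plan is to upgrade the crude bound $\ell(t) \le -t + \log 2$-type estimate (more precisely, to improve on $\ell(t) \le \alpha^{-1}t$-type linear growth with bounded error, which is not yet available) by comparing the vertical displacement against the area. The key identity is $\ell(t) = \langle \gamma(\pi,t), e_2\rangle$, and since the solution converges to the Grim hyperplane at the tip (Lemma \ref{tip}), the ``tip region'' contributes only a bounded amount to $\ell(t)$ beyond what the bulk of the curve sees. More precisely, I would relate $\ell(t)$ to the area: by convexity, a trapezoid inscribed in $\Gamma_t$ through the points of maximal horizontal displacement and through two points near the tip has area at least $c\,(-t)$ with $c \to 2\pi$ after rescaling widths, which combined with the sharp area estimate $A(t) = -2\pi t + 2\pi\dot\phi_1\log(-t) + O(1)$ from the preceding corollary forces $\ell(t)$ not to exceed $-t$ by more than a logarithmic-type amount. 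However, since we only want $o((-t)^\varepsilon)$ and not the sharp $O(\log(-t))$, a cruder argument suffices.

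First I would set up the comparison: using reflection symmetry (Theorem \ref{reflection}) and the graphical representation $(x, u(x,t))$ for $x \in [-h(t), h(t)]$, write
\[
\frac{A(t)}{4} = \int_0^{h(t)} u(x,t)\,dx \geq \int_0^{h(t)} (-t + T(x))\,dx \geq -t\,h(t) - \int_0^{h(t)} \frac{\phi_1\pi}{\tfrac{\pi}{2}-x}\,dx\,,
\]
where the inner estimate comes from $u(x,t) \geq -t + T(x)$ together with the bound on $T(x)$ derived just before Lemma \ref{lem:uhphiasymp}. The last integral diverges logarithmically as $h(t) \to \pi/2$, but using Lemma \ref{lem:uhphiasymp}(ii), $h(t) \le \tfrac{\pi}{2}(1 - 2\phi_1 c_k(-t)^{-k})$, so $\int_0^{h(t)} (\tfrac{\pi}{2}-x)^{-1}\,dx \le \log\big((-t)^k/(2\phi_1 c_k)\big) = O(\log(-t))$. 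Hence $A(t)/4 \ge -t\,h(t) - O(\log(-t))$, i.e. $-t\,h(t) \le A(t)/4 + O(\log(-t))$.

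Next, I would control $\ell(t)$ directly. The point $\gamma(\pi,t)$ sits at height $\ell(t)$, and by convexity the region above the chord joining $\gamma(\pm\pi/2, t)$ (at height $0$) up to the tip fits inside a triangle with base $2h(t)$ and apex at height $\ell(t)$; but in the other direction, convexity and the local convergence to the Grim hyperplane near the tip (Lemma \ref{tip}) give an inscribed trapezoid of area at least $(\text{width} )\cdot(\ell(t) - C_\delta)$ where the width is at least $\pi - \delta$ eventually and $C_\delta$ bounds the height of the tip cap. Concretely, for any $\delta \in (0,1)$ there is $t_\delta$ so that for $t < t_\delta$, $A(t) \ge 2(\pi - \delta)(\ell(t) - C_\delta)$. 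Combining with the sharp area upper bound $A(t) \le -2\pi t + 2\pi\dot\phi_1\log(-t) + 2\pi C$ gives
\[
\ell(t) \le \frac{-2\pi t + 2\pi\dot\phi_1\log(-t) + 2\pi C}{2(\pi-\delta)} + C_\delta = \frac{\pi}{\pi-\delta}(-t) + O(\log(-t)) + C_\delta\,.
\]
Letting $\delta \to 0$ is not directly legitimate since $C_\delta$ blows up, but the factor $\tfrac{\pi}{\pi-\delta} = 1 + \tfrac{\delta}{\pi-\delta}$ contributes an extra $\tfrac{\delta}{\pi-\delta}(-t)$; choosing $\delta = \delta(t) \to 0$ slowly (e.g. $\delta(t) \sim (-t)^{\varepsilon-1}$, checking that $C_{\delta(t)} = o((-t)^\varepsilon)$ via the quantitative Grim-hyperplane convergence from Lemma \ref{lem:uhphiasymp}(iii) applied to estimate the tip cap height) yields $\ell(t) \le -t + o((-t)^\varepsilon)$.

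The main obstacle is the dependence of the tip-cap constant $C_\delta$ on $\delta$: one needs a \emph{quantitative} version of the convergence in Lemma \ref{tip} to bound how $C_\delta$ grows as $\delta \to 0$, so that the diagonal choice $\delta = \delta(t)$ genuinely produces an $o((-t)^\varepsilon)$ error rather than something larger. This is exactly where Lemma \ref{lem:uhphiasymp}(iii) does the work: it gives $u(x,t) \ge -t - (\phi_1\pi c_k/(\tfrac{\pi}{2}-x))^{1/k}$, so the height of the tip region lying outside a slab of width $\pi - \delta$ (i.e. for $x$ within $\delta/2$ of $\pm\pi/2$) is bounded by $(\phi_1\pi c_k/(\delta/2))^{1/k} = O(\delta^{-1/k})$, and choosing $k$ large relative to how $\delta = \delta(t)$ is picked makes this $o((-t)^\varepsilon)$. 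Assembling these estimates carefully, while keeping track of which constants depend on $\delta$ and which don't, is the only delicate point; everything else is convexity plus the area bounds already established.
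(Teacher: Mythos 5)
Your strategy---lower bound the area by an inscribed trapezoid and play it off against the sharp area upper bound---is a reasonable way to attack the lemma, and you correctly identify the crux: as $\delta\to 0$, you need quantitative control on the ``tip-cap'' constant $C_\delta$. But the proposed fix is circular. Lemma~\ref{lem:uhphiasymp}(iii) bounds $-t-u(x,t)$, not $\ell(t)-u(x,t)$. The constant $C_\delta$ in your trapezoid is the drop from the tip, i.e.\ $C_\delta=\ell(t)-u\bigl(\tfrac{\pi}{2}-\tfrac{\delta}{2},t\bigr)$, and from Lemma~\ref{lem:uhphiasymp}(iii) you only get $C_\delta\le\bigl(\ell(t)+t\bigr)+O(\delta^{-1/k})$. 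The term $\ell(t)+t=\ell(t)-(-t)$ is precisely the quantity the lemma is meant to bound, so feeding this back into your inequality $\ell(t)\le\frac{\pi}{\pi-\delta}(-t)+O(\log(-t))+C_\delta$ yields a tautology and no information on $\ell$.

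What is missing is a bound on the tip drop $\ell(t)-u(x,t)$ that is uniform in $t$ and depends only on $\tfrac{\pi}{2}-x$. This is available from the pointwise speed inequalities: the Harnack inequality together with the Grim-hyperplane convergence (Lemma~\ref{tip}, with $\alpha=1$) give $\phi(\theta,t)\ge|\cos\theta|$, while $1$-homogeneity and monotonicity give $\phi\le\phi_1\kappa$. Combining these, $|dx/d\theta|=|\cos\theta|/\kappa\le\phi_1$, which upon integration yields $\theta(x,t)\ge\tfrac{\pi}{2}+(h(t)-x)/\phi_1$ on the upper quarter, hence $|u_x(x,t)|=|\tan\theta|\le\cot\bigl((h(t)-x)/\phi_1\bigr)$. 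Integrating once more gives a $t$-uniform tail estimate for $\ell(t)-u(x,t)$, and then
\[
\int_0^{h(t)}\bigl(\ell(t)-u(x,t)\bigr)\,dx \le \int_0^{h(t)}(h(t)-s)\cot\bigl((h(t)-s)/\phi_1\bigr)\,ds \le C
\]
with $C$ independent of $t$. Feeding this into $\tfrac14 A(t)=h(t)\ell(t)-\int_0^{h(t)}(\ell-u)\,dx$ together with the area upper bound and the lower bound on $h(t)$ already gives $\ell(t)\le -t+O(\log(-t))$, which is stronger than the stated claim and renders the diagonal choice $\delta=\delta(t)$ unnecessary. This $L^1$-tail estimate (rather than Lemma~\ref{lem:uhphiasymp}(iii)) is what I believe the paper's ``area estimation trick of \cite{BLT}'' refers to.
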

\begin{proof}
    This follows from the same area estimation trick of \cite[Lemma 7.3]{BLT}.
\end{proof}

%{\color{red}Is the precise statement of the above area estimates (with $C=0$) a red herring? We just need $A(t)\sim -2\pi t\pm o((-t)^\varepsilon)$, right?}

\begin{corollary}\label{cor:improvedHestimate} For any $\varepsilon\in(0,1)$
\[
\phi(\theta, t)\ge \vert{\cos\theta}\vert\left(1+\frac{\dot{\phi}_1}{-t}-o\left(\frac{1}{(-t)^{2-\varepsilon}}\right)\right)\;\;\text{as}\;\; t\to-\infty\;\;\text{for all}\;\; \theta \in S^1\,,
\]
where $\dot\phi_1$ is defined by \eqref{eq:phiapprox}.
\end{corollary}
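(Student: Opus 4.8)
The plan is to prove the estimate at the pole $\theta = \pi$ (equivalently $\theta = 0$, by reflection symmetry from Theorem \ref{reflection}), and then propagate it to all $\theta \in S^1$ by the maximum principle applied to an appropriate barrier quantity. For the pole estimate, recall that $\phi(\pi,t) = -\ell'(t)$, so an upper bound for $\ell$ should translate into a lower bound for $\phi(\pi,t)$ — but only after integration, so we must be a little careful. Instead, the cleaner route is to combine the area lower bound $\frac{A(t)}{2\pi} \geq -t + \dot\phi_1 \log(-t) - C$ with the evolution equation $-A'(t) = \int_0^{2\pi}\phi\,d\theta$, together with the upper bound $\phi(\kappa,\lambda) \leq \kappa + \lambda\dot\phi_1 + C\lambda^2/\kappa$ from \eqref{eq:phiapprox} and the integral estimates of Lemma \ref{lem:lambdaintegrals}; writing $\int_{\Gamma_t}\phi\,ds = \int_{S^1}\phi(1,\lambda/\kappa)\,d\theta$ and using $\int_{\Gamma_t}\lambda\,ds \le \frac{2\pi}{-t} + o((-t)^{-2+\varepsilon})$ shows that $\int_{S^1}(\phi(1,\lambda/\kappa) - 1)\,d\theta = \dot\phi_1\cdot\frac{1}{-t} + o((-t)^{-2+\varepsilon})$ up to the desired order. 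Since the integrand is nonnegative (Lemma \ref{lambdaleqkappa}) this pins down the behaviour of $\phi(1,\lambda/\kappa)$, and at the pole $\lambda/\kappa = 1$ gives the bound in the sharpest form; more directly, one uses $\phi(\pi,t) = -\ell'(t) \ge 1 + \dot\phi_1/(-t) - o((-t)^{-2+\varepsilon})$, which follows by differentiating the refined displacement estimate $\ell(t) \le -t + o((-t)^\varepsilon)$ of Lemma \ref{lem:ooft^-eps} in an integrated (averaged) sense, exactly as in \cite[proof following Lemma 7.3]{BLT}.

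Having established the estimate at the pole, I would introduce the quantity $w(\theta,t) \coloneqq \phi(\theta,t)/|\cos\theta|$, which by \eqref{eq:evolve phi} and the relation \eqref{lambdatheta} satisfies a parabolic equation; the point is that at an interior spatial minimum of $w$ one has $\phi_\theta = -\phi\tan\theta$, and plugging this into \eqref{eq:evolve phi} the reaction term becomes $\phi_\lambda\lambda^2\phi\sec^3\theta > 0$ (this is precisely the computation in the commented-out Lemma \ref{lem:l>-t}). Thus $\min_{S^1} w$ is nondecreasing in a suitable sense, and more quantitatively any lower bound of the form $w \ge 1 + \dot\phi_1/(-t) - o((-t)^{-2+\varepsilon})$ that holds at the poles (where $w = \kappa/|\cos\theta|$... actually at the poles $|\cos\theta|\to 0$ so one must instead track the limit, using $w(\pm\pi/2,t) = \lim_{\theta\to\pi/2}\phi/|\cos\theta|$ which is finite and comparable to $\kappa(\pi/2,t)$) propagates inward. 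The cleanest implementation is to compare $w$ against the spatially-constant subsolution $\psi(t) \coloneqq 1 + \dot\phi_1/(-t) - A(-t)^{-2+\varepsilon}$ for a large constant $A$: one checks $\psi$ is a subsolution of the $w$-equation for $t$ sufficiently negative (the $\dot\phi_1/(-t)$ term contributes $\dot\phi_1/(-t)^2$ to $\psi'$, which is absorbed by the reaction term at the relevant scale), and that $w \ge \psi$ at the poles and at some initial time; the maximum principle then gives $w \ge \psi$ everywhere.

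The main obstacle I anticipate is handling the degeneracy of $w = \phi/|\cos\theta|$ at the poles $\theta = \pm\pi/2$, where both numerator and denominator vanish: one must verify that $w$ extends to a smooth (or at least $C^2$) function of $\theta$ there, and identify its boundary value in terms of $\kappa(\pi/2,t)$, in order to feed the pole estimate into the maximum principle argument cleanly. This requires the non-degeneracy hypothesis on $\phi$ (so that $\phi_\kappa(1,0) = 1$ and the relevant derivatives are controlled) and a careful Taylor expansion of $\phi$ near $\lambda/\kappa \to 0$ combined with the expansion of $\cos\theta$ near $\pm\pi/2$; this is the place where the degeneration of ellipticity genuinely interacts with the argument, and is presumably why the authors flagged the analogous step in \cite{BLT} as requiring modification. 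A secondary technical point is ensuring the error term $o((-t)^{-2+\varepsilon})$ in the pole estimate — which ultimately traces back to Lemma \ref{lem:lambdaintegrals}(i) and Lemma \ref{lem:ooft^-eps} — is uniform enough to serve as a genuine subsolution correction; I would absorb this by choosing the constant $A$ in $\psi$ large depending on $\varepsilon$ and restricting to $t < t_\varepsilon$.
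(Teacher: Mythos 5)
Your core idea — a maximum principle for a ratio of the form $\phi/(\text{scaled }\cos\theta)$, with the scaling factor chosen so that the ODE it must satisfy is controlled by the displacement estimate $\ell(t)\le -t+o((-t)^\varepsilon)$ of Lemma \ref{lem:ooft^-eps} — is exactly the mechanism the paper uses (with $w=f(t)\cos\theta$, $f(t)=\exp[\dot\phi_1/(-t)-\Lambda/((2-\varepsilon)(-t)^{2-\varepsilon})]$, and the maximum principle applied to $\phi/w$). Where you deviate, however, you introduce steps that are either flawed or based on a misconception.

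First, the ``pole estimate'' at $\theta=\pi$ is not needed, and your derivations of it do not work. You cannot differentiate the asymptotic inequality $\ell(t)\le -t+o((-t)^\varepsilon)$ to deduce $-\ell'(t)=\phi(\pi,t)\ge 1+\dot\phi_1/(-t)-o((-t)^{-2+\varepsilon})$: an upper bound on a function gives no lower bound on its derivative, and no ``averaging'' fixes this (indeed the conclusion you want integrates to $\ell(t)\ge -t+\dot\phi_1\log(-t)-C$, a statement in the opposite direction and of a finer order than what Lemma \ref{lem:ooft^-eps} provides). Likewise, the integral estimate $\int_{S^1}(\phi(1,\lambda/\kappa)-1)\,d\theta\sim\dot\phi_1/(-t)$ constrains the integrand only in an $L^1$ sense and does not ``pin down'' its pointwise value at $\theta=\pi$. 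The paper instead supplies the comparison input as $t_0\to-\infty$: since the solution converges (after translation) to the unit-speed Grim hyperplane ($\alpha=1$), one has $\min_{S^1}\phi(\cdot,t_0)/\big(f(t_0)\cos\theta\big)\to 1$, and the maximum principle then propagates this lower bound forward in time. No anchoring at a single spatial point is required.

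Second, the ``main obstacle'' you anticipate — degeneracy of $\phi/\vert\cos\theta\vert$ at $\theta=\pm\pi/2$ because ``both numerator and denominator vanish'' — is a misreading of the geometry. The numerator does \emph{not} vanish there: at $\theta=\pm\pi/2$ one has $\kappa=\lambda=\kappa(\pm\pi/2,t)>0$ (the curve is strictly convex), so $\phi(\pm\pi/2,t)=\phi_1\kappa(\pm\pi/2,t)>0$ while $\cos\theta=0$, hence $\phi/\vert\cos\theta\vert\to+\infty$ as $\theta\to\pm\pi/2$. This is favourable rather than problematic: it forces the spatial minimum of the quantity into the interior $(-\pi/2,\pi/2)$, which is precisely what the interior maximum principle needs — and is why the paper can dismiss $\theta=\pm\pi/2$ as ``the claim holds trivially.'' There is no boundary Taylor expansion or $C^2$-extension issue to address. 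Your secondary concern (choosing the barrier constant large depending on $\varepsilon$) is real and is handled in the paper exactly as you suggest, by taking $\Lambda$ large so that $f'/f\le\dot\phi_1/\ell^2-C/\ell^3$ for $t<-1$.
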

\begin{proof} By symmetry, it suffices to prove the claim for $\theta\in (-\pi/2,\pi/2)$ (note that for $\theta=\pm\pi/2$ the claim holds trivially). Consider the function $w:(-\frac{\pi}{2},\frac{\pi}{2})\times(-t,0)\to\R$ defined by $w(\theta,t):=f(t)\cos\theta$, where the function $f:(-\infty,0)\to\R_+$ will be determined momentarily. Observe that
\[
w_t=\phi_\kappa\kappa^2w_{\theta\theta}+|\mathrm{II}|_\phi^2w+\left(\frac{f'}{f}-\phi_\lambda\lambda^2\right)w\,,
\]
where $\vert \mathrm{II}\vert_\phi^2:=\phi_\kappa\kappa^2+\phi_\lambda\lambda^2$.
Recalling Lemma \eqref{evoeq}, we compute
\begin{align*}
\frac{w}{\phi}\left(\partial_t-\phi_\kappa\kappa^2\partial^2 _{\theta}\right)\frac{\phi}{w}={}&2\phi_\kappa\kappa^2\frac{w}{\phi}\left(\frac{\phi}{w}\right)_\theta\frac{w_\theta}{w}-\phi_\lambda\lambda^2\tan\theta \frac{\phi_\theta}{\phi}-\left(\frac{f'}{f}-\phi_\lambda\lambda^2\right)\,.
\end{align*}
Rewriting
\[
\frac{\phi_\theta}{\phi}=\frac{w}{\phi}\left(\frac{\phi}{w}\right)_\theta-\frac{w_\theta}{w}\quad\text{and}\quad
\frac{w_\theta}{w}=-\tan\theta
\]
we obtain
\begin{align}\label{eq:evolveH/w}
\frac{w}{\phi}\left(\partial_t-\phi_\kappa\kappa^2\partial^2 _{\theta}\right)\frac{\phi}{w}+{}&\tan\theta\frac{w}{\phi}\left(\frac{\phi}{w}\right)_\theta\left(\phi_\lambda\lambda^2+2\phi_\kappa\kappa^2\right)\nonumber\\
={}&\phi_\lambda\sec^2\theta\lambda^2-\frac{f'}{f}\nonumber\\
={}&\frac{\phi_\lambda}{y^2}-\frac{f'}{f}\nonumber\\
\ge{}&\frac{\dot{\phi}_1}{y^2}-C\frac{\lambda}{\kappa y^2}-\frac{f'}{f}\nonumber\\
\ge{}&\frac{\phi_\lambda}{\ell^2}-\frac{C}{\ell^3}-\frac{f'}{f}
\end{align}
since $\kappa\ge C\phi\ge \cos\theta$.

Now fix any $\varepsilon\in(0,1)$. By Lemma \ref{lem:ooft^-eps}, there is some $C_\varepsilon<\infty$ such that
\[
\frac{1}{\ell(t)^2}\geq\frac{1}{(-t)^2}-\frac{C_\varepsilon}{(-t)^{3-\varepsilon}}
\]
for all $t\in(-\infty,-1]$, say. Thus, if we set
\[
f(t):=\exp\left(\left[\frac{\dot\phi_1}{-t}-\frac{\Lambda}{(2-\varepsilon)(-t)^{2-\varepsilon}}\right]\right)\,,
\]
then we may choose $\Lambda$ so large that
\[
\frac{f'(t)}{f(t)}=\left[\frac{\dot\phi_1}{(-t)^2}-\frac{\Lambda}{(-t)^{3-\varepsilon}}\right]\leq \frac{\dot\phi_1}{\ell(t)^2}-\frac{C}{\ell^3}
\]
for all $t<-1$. So the maximum principle yields
\[
\min_{S^1\times\{t\}}\frac{\phi}{w}\geq\min_{S^1\times\{t_0\}}\frac{\phi}{w}\quad\text{for all}\quad -1>t>t_0\,.
\]
But the right hand side approaches 1 as $t\to-\infty$. The claim follows by estimating $\exp(\zeta)\geq 1+\zeta$. %[If $\phi$ is convex, then $\dot\phi_\lambda(\kappa,\lambda)=\dot\phi_\lambda(1,\lambda/\kappa)\ge \dot{\phi}_1$, so we may take $\gamma=\dot{\phi}_1$ in this case.]
\end{proof}

Integrating the lower speed bound yields a displacement estimate.
\begin{lemma}\label{lem:ellasymptotics}
The limit
\[
C:=\lim_{t\to-\infty}(\ell(t)+t-\dot{\phi}_1\log(-t))
\]
exists (in the extended real line $\R\cup\{\infty\}$).
\end{lemma}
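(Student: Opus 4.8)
The plan is to integrate the speed lower bound from Corollary \ref{cor:improvedHestimate} at the pole $\theta=\pi$, where $|\cos\theta|=1$, and to pair it with a matching upper bound so that the quantity $g(t):=\ell(t)+t-\dot\phi_1\log(-t)$ is seen to be monotone (up to a convergent error) as $t\to-\infty$, hence has a limit in $\R\cup\{\infty\}$. Concretely, since $-\ell'(t)=\phi(\pi,t)$, Corollary \ref{cor:improvedHestimate} gives
\[
-\ell'(t)\ge 1+\frac{\dot\phi_1}{-t}-o\!\left(\frac{1}{(-t)^{2-\varepsilon}}\right),
\]
so that
\[
\frac{d}{dt}\bigl(\ell(t)+t-\dot\phi_1\log(-t)\bigr)=\ell'(t)+1-\frac{\dot\phi_1}{-t}\le o\!\left(\frac{1}{(-t)^{2-\varepsilon}}\right).
\]
Choosing $\varepsilon<1$, the right-hand side is integrable near $-\infty$, so $g(t)$ is bounded above by an absolutely convergent integral plus a monotone piece: writing $g(t)=g(t_0)+\int_{t_0}^{t}g'(s)\,ds$ and splitting $g'$ into its (integrable) positive part and its negative part, one sees that $g$ is the sum of a nonincreasing function and a function with a finite limit, hence $\lim_{t\to-\infty}g(t)$ exists in $\R\cup\{\infty\}$ (the limit may be $+\infty$ precisely because we only control $g'$ from above, not from below).

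The key steps, in order, are: (1) identify $-\ell'(t)=\phi(\pi,t)$ from the definition of $\ell$ and the flow equation, using the reflection symmetry (Theorem \ref{reflection}) so that the pole at $\theta=\pi$ is genuinely the point of maximal vertical displacement; (2) insert the sharp lower speed bound of Corollary \ref{cor:improvedHestimate} with $\cos\pi=-1$, giving $-\ell'(t)\ge 1+\dot\phi_1/(-t)-o((-t)^{-(2-\varepsilon)})$; (3) rewrite this as an upper bound on $g'(t)$ that is $o((-t)^{-(2-\varepsilon)})$ and hence integrable on $(-\infty,-1]$; (4) conclude that $g(t)-\int_{-1}^{t}\max\{g'(s),0\}\,ds$ is nonincreasing in $-t$ while the subtracted integral converges, so the limit of $g$ exists in $\R\cup\{\infty\}$.

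I expect the main (though modest) obstacle to be verifying that the $o((-t)^{-(2-\varepsilon)})$ error term in Corollary \ref{cor:improvedHestimate} is genuinely integrable and can be absorbed cleanly — i.e. making sure one picks $\varepsilon\in(0,1)$ (so $2-\varepsilon>1$) and tracking that the ``little-$o$'' is uniform enough to integrate, rather than merely pointwise. A secondary point is the bookkeeping that produces a limit in the \emph{extended} reals: because only an upper bound on $g'$ is available at this stage, one cannot yet rule out $g(t)\to+\infty$, and the statement is phrased accordingly; the decomposition of $g$ into a monotone part plus a convergent part makes this precise without needing any further two-sided estimate. (A matching lower bound, sharpening the limit to a finite number, would presumably come later from an upper speed estimate, but is not needed for this lemma.)
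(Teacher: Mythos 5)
Your proof is correct and takes essentially the same approach as the paper: integrate the sharpened lower speed bound of Corollary \ref{cor:improvedHestimate} at the pole to show $g(t)\coloneqq\ell(t)+t-\dot\phi_1\log(-t)$ has derivative bounded above by an integrable quantity $o\bigl((-t)^{-(2-\varepsilon)}\bigr)$, then conclude $g$ is monotone up to a convergent correction and hence has a limit in $\R\cup\{\infty\}$. The paper implements the last step by subtracting an explicit barrier $f(t)=\frac{C}{1-\varepsilon}(-t)^{\varepsilon-1}$ so that $g-f$ is monotone and $f\to0$; your decomposition of $g'$ into positive and negative parts is the same argument in different clothing. (One small slip worth fixing: since $\frac{d}{dt}\log(-t)=\frac{1}{t}=-\frac{1}{-t}$, the derivative is $g'(t)=\ell'(t)+1+\frac{\dot\phi_1}{-t}$, not $\ell'(t)+1-\frac{\dot\phi_1}{-t}$; with the correct sign the $\dot\phi_1/(-t)$ terms cancel against $-\ell'\ge 1+\dot\phi_1/(-t)-o(\cdot)$ and one indeed gets $g'\le o((-t)^{\varepsilon-2})$, which is the bound you go on to use.)
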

\begin{proof}
Given any $\varepsilon\in(0,1)$ set $f(t):=\frac{C}{1-\varepsilon}\frac{1}{(-t)^{1-\varepsilon}}$ for some $C\in\R$. By Corollary \ref{cor:improvedHestimate}, we can choose $C$ so that
\[
\frac{d}{dt}(\ell+t-\dot{\phi}_1\log(-t)-f)\leq 0\,.
\]
The claim follows because $\lim_{t\to-\infty}f=0$.
\end{proof}

\section{Uniqueness} \label{sec:uniqueness}
In this section, we will show that the pancake solutions constructed in Section 4 are unique. We will do so in several steps. The essential idea, as in \cite{BLT}, is to exploit the constructed solution as a barrier using the Alexandrov reflection principle. First, we show that on the constructed solution the constant $C$ defined by
\[
C\coloneqq \lim_{t \to -\infty}\left(\ell(t)+t-\dot{\phi}_1\log(-t)\right)
\]
is finite. We will use methods developed in Section \ref{sec:estimates} to obtain area and displacement estimates, which will allow us to prove this. We require the following lemma.

%We claim that $\phi_R(\theta,t)>\vert{\cos\theta}\vert$ for $t\geq-T_R$ on the approximating solutions. 
\begin{lemma}\label{lem:phi ge cos theta}
On the approximating solutions, $\phi_R(\theta,t)\geq |{\cos\theta}|$.
%In particular, $\phi(\pi,t)\geq 1$. %Therefore, $\ell(t)\geq -t$ for all $t>-T_R$. 
\end{lemma}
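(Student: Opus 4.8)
The plan is to regard $|\cos\theta|$ --- which is exactly the profile curvature of the Grim reaper, the model expected at the tips --- as a subsolution of the equation governing $\phi_R$, and to combine this with a direct inspection of the initial (Angenent) data.

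First I would verify the inequality at the initial time $t=-T_R$. There the profile curve is the Angenent oval timeslice $\Gamma_{-R}$, so from $\frac{dx}{ds}=\cos\theta$ together with the explicit formula for $x(\theta,t)$ one reads off that its curvature in the turning-angle parametrization is $\kappa_R(\theta,-T_R)=\sqrt{\cos^2\theta+a^2(-R)}\ge|\cos\theta|$. Since $\phi$ is increasing in each variable and, by $1$-homogeneity and the normalization $\phi(1,0,\dots,0)=1$, one has $\phi(\kappa,0)=\kappa$, it follows that $\phi_R(\theta,-T_R)=\phi(\kappa_R,\lambda_R)\ge\phi(\kappa_R,0)=\kappa_R\ge|\cos\theta|$.

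Next I would propagate this forward in time using the maximum principle. By the reflection symmetries (which are preserved by the flow) it suffices to prove $\phi_R(\theta,t)\ge\cos\theta$ for $\theta\in[-\tfrac\pi2,\tfrac\pi2]$, the endpoints being trivial since $\cos(\pm\tfrac\pi2)=0$. Setting $w\coloneqq\phi_R/\cos\theta$ on $(-\tfrac\pi2,\tfrac\pi2)\times(-T_R,0)$, a direct computation from \eqref{eq:evolve phi} gives
\[
w_t=\phi_\kappa\kappa^2 w_{\theta\theta}-\bigl(2\phi_\kappa\kappa^2+\phi_\lambda\lambda^2\bigr)\tan\theta\,w_\theta+\phi_\lambda\lambda^2\sec^2\theta\,w\,,
\]
whose reaction coefficient $\phi_\lambda\lambda^2\sec^2\theta$ is nonnegative. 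Since $w(\cdot,-T_R)\ge1$ by the previous step and $w\to+\infty$ as $\theta\to\pm\tfrac\pi2$ (because $\phi_R$ is positive at the poles), if $w$ were ever to attain the value $1$ it would do so first at an interior point $(\theta_0,t_0)$, at which $w_\theta=0$, $w_{\theta\theta}\ge0$ and $w_t\le0$; but then
\[
0\ge w_t=\phi_\kappa\kappa^2 w_{\theta\theta}+\phi_\lambda\lambda^2\sec^2\theta_0\,w\ge\phi_\lambda\lambda^2\sec^2\theta_0>0\,,
\]
using $\lambda(\theta_0,t_0)=\cos\theta_0/y>0$ away from the poles and $\phi_\lambda>0$, a contradiction. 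Hence $w\ge1$ throughout, which together with the reflected inequality yields the claim.

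I expect the only delicate point to be making the boundary behaviour at $\theta=\pm\tfrac\pi2$ precise in the maximum-principle step --- for instance, by localising to $[-\tfrac\pi2+\varepsilon,\tfrac\pi2-\varepsilon]$ and bounding $w$ there, or by invoking a barrier/the strong maximum principle directly; the computation of the equation for $w$ from \eqref{eq:evolve phi} and the verification of the Angenent curvature formula are routine. I would also record the intended application: this lemma gives $-\ell_R'(t)=\phi_R(\pi,t)\ge1$ on the approximating solutions, hence $\ell_R(t)\ge-t$, and so, upon passing to the limit, the finiteness of the constant $C$ appearing in the uniqueness argument.
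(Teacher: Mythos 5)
Your proof is correct and takes essentially the same approach as the paper's: verify the inequality at the initial time and at the poles, then apply the maximum principle to $w=\phi/\cos\theta$ on the interior. The only difference is that you spell out the details the paper leaves implicit (the Angenent oval's curvature $\kappa=\sqrt{\cos^2\theta+a^2}$ to establish the initial inequality, and the full evolution equation for $w$ rather than just its value at a critical point), but the substance is identical.
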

\begin{proof}
First observe that $\phi_R(\theta,-T_R)\geq |{\cos\theta}|$ by construction, and the inequality is always (trivially) satisfied at the poles $\theta=-\pi/2,\pi/2$. So we need to show that it continues to hold in $(-\pi/2,\pi/2)$ (by reflection symmetry, it will hold on the other half as well). To that end, consider the function $w=\phi/\cos\theta$. Suppose that $w$ develops a new interior minimum at some $(\theta_0,t_0)$. At this point, we have
\[
\phi_\theta=-\phi \tan\theta_0
\]
and hence
\[
0 \geq w_t-\kappa^2\phi_k w_{\theta\theta}=\phi_\lambda\lambda^2\phi\sec^3\theta_0 > 0.
\]
This leads to a contradiction and the claim follows. %The rest of the claim follows by plugging in $\theta=\pi$, and integrating from $t$ to $0$.
\end{proof}

Equations \eqref{DispEst} now allow us to proceed as in \S \ref{sec:estimates} to obtain the area estimate 
\begin{equation}
    -t+ \Dot{\phi}_1\log(-t)+C \geq \frac{A(t)}{2\pi} \geq -t+ \Dot{\phi}_1\log(-t)-C
\end{equation}
for all $t\in[-T_R,1)$. Bounding the area of $\Gamma_R(t)$ by a rectangle of height $2\ell_R(t)$ and width $\pi$ gives the length estimate
\[\ell_R(t) \geq -t+ \Dot{\phi}_1\log(-t)-C.\]

Now we show that $\ell_R(t)$ is bounded similarly from above (uniformly with respect to $R$). Proceeding in the same way as Lemma \ref{lem:ooft^-eps}, we obtain that for any $\varepsilon\in (0,1)$,
\[\ell(t) \leq -t+C_\varepsilon(-t)^{\varepsilon}\,,\]
where $C_\varepsilon$ is independent of $R.$
Now, proceeding as in \cite{BLT}, we may choose $C<\infty$ such that
\[\frac{d}{dt}(\ell_R(t)+t-\Dot{\phi}_1\log(-t)-f) \leq 0,\]
where $f \coloneqq \frac{C}{1-\varepsilon}\frac{1}{(-t)^{1-\varepsilon}}.$ Now integration yields
\[\ell_R(t)+t-\Dot{\phi}_1\log(-t) \leq \ell_R(-T_R)-T_R-\Dot{\phi}_1\log(T_R)+f(t)-f(-T_R).\]
Now, since
\[\ell_R(-T_R) \leq T_R + \Dot{\phi}_1\log(-T_R)+C+\log 2\]
and $\lim_{t \to -\infty}f(t)=0$, we let $R \to \infty$ to see that $C$ is finite.

Now we sketch the proof of uniqueness (cf.  \cite[Proof of Theorem 1.2]{BLT} for details.)  Suppose $\gamma,\gamma':S^1\times(-\infty,0)\to\R^2$ are the turning angle parametrizations of the profile curves $\Gamma_t,\Gamma_t'$ respectively of the solution constructed in Section 3, and an arbitrary $O(n)$-invariant compact convex ancient solution to the $\phi$-flow that lies in the same slab and no smaller slab. Let $\Omega_t,\Omega'_t$ be the open sets contained inside the profile curves. Since both solutions contract to the origin at $t=0$, they must intersect for all previous times due to the avoidance principle. Now consider the two constants
\begin{align*}
    C&:=\lim_{t\to-\infty}(\ell(t)+t-\dot{\phi}_1\log(-t))\\
    C'&:=\lim_{t\to-\infty}(\ell'(t)+t-\dot{\phi}_1\log(-t))
\end{align*}
where $\ell(t),\ell'(t)$ are the respective vertical displacements of $\gamma,\gamma'$. We have just shown that $C$ is finite, and that $C'\in \R\cup\{\infty\}$. If $C \neq C'$, it follows by the strong maximum principle that $\Omega_t'\Subset \Omega_t$ or $\Omega_t\Subset \Omega_t'$, and hence $\Gamma_t,\Gamma_t'$ never intersect. Thus $C=C'$. This implies that by the same reasoning, for any $\tau>0$ we would have that $\Omega_t'\Subset\Omega_{t+\tau}$. Taking $\tau \to 0$ shows that $\bar\Omega_t' \subset \bar\Omega_t$. By symmetry,  $\bar\Omega_t \subset \bar\Omega_t'$. Thus the solutions coincide.

\bibliographystyle{plain}
\bibliography{references.bib}

\end{document}